\numberwithin{equation}{section}
\newcommand{\field}[1]{\mathbb{#1}}
\newcommand{\Z}{\field{Z}}
\newcommand{\R}{\field{R}}
\newcommand{\C}{\field{C}}
\newcommand{\N}{\field{N}}
 \def\cC{\mathscr{C}}
\def\cO{\mathscr{O}}
\def\mL{\mathcal{L}}
\newcommand{\boldsym}[1]{\boldsymbol{#1}}
\newcommand\bb{\boldsym{b}}
 \DeclareMathOperator{\Ker}{Ker}
 \DeclareMathOperator{\Dom}{Dom}
\DeclareMathOperator{\Bs}{Bs}
\newcommand{\bfs}{{\rm \mathbf{s}}}
\newcommand{\FS}{{\rm FS}}
\renewcommand{\P}{\field{P}}
\newcommand{\om}{\omega}
\newtheorem{thm}{Theorem}[section]
\newtheorem{lemma}[thm]{Lemma}
\newtheorem{prop}[thm]{Proposition}
\newtheorem{cor}[thm]{Corollary}
\theoremstyle{definition}
\newtheorem{rem}[thm]{Remark}
\theoremstyle{definition}
\newcommand{\ov}{\overline}
\newcommand{\var}{\varepsilon}
\newcommand{\comment}[1]{}
\begin{document}

\title[Bergman kernels and equidistribution for sequences of line bundles]
{Bergman kernels and equidistribution 
for sequences of line bundles on K\"ahler manifolds}

\author{Dan Coman}

\address{Department of Mathematics,
Syracuse University,
\newline
    \mbox{\quad}\,Syracuse, NY 13244-1150, USA}
\email{dcoman@syr.edu}
\thanks{D.\ C.\ is partially supported by the NSF Grant DMS-1700011}

\author{Wen Lu}

\address{School of Mathematics and Statistics,
Huazhong University of Science and Technology,
\newline
    \mbox{\quad}\,Wuhan 430074, China}
\email{wlu@hust.edu.cn}
\thanks{W.\ L.\ supported by National Natural Science Foundation
of China (Grant Nos. 11401232, 11871233)}

\author{Xiaonan Ma}

\address{Institut de Math\'ematiques de Jussieu-Paris Rive Gauche,
Universit\'e de Paris, CNRS, 
 F-75013
\newline
    \mbox{\quad} \,Paris, France}
\email{xiaonan.ma@imj-prg.fr}
\thanks{X.\ M.\ partially supported by
NNSFC   No.11829102 and
funded through the Institutional Strategy of
the University of Cologne within the German Excellence Initiative}

\author{George Marinescu}

\address{Universit{\"a}t zu K{\"o}ln,  Mathematisches Institut,
    Weyertal 86-90,   50931 K{\"o}ln, Germany
    \newline
    \mbox{\quad}\,Institute of Mathematics `Simion Stoilow', 
	Romanian Academy,
Bucharest, Romania}
\email{gmarines@math.uni-koeln.de}
\thanks{G.\ M.\ partially supported by 
the DFG funded projects SFB TRR 191 `Symplectic Structures in Geometry, 
Algebra and Dynamics' (Project-ID 281071066\,--\,TRR 191)
and SPP 2265 `Random Geometric Systems' (Project-ID 422743078).
}

\date{\today}

\begin{abstract}
Given a sequence of positive Hermitian holomorphic line bundles 
$(L_p,h_p)$ on a K\"ahler manifold $X$, 
we establish the asymptotic expansion of the Bergman kernel 
of the space of global holomorphic sections of $L_p$, 
under a natural convergence assumption on the sequence of curvatures 
$c_1(L_p,h_p)$. We then apply this to study the asymptotic distribution 
of common zeros of random sequences of $m$-tuples of sections of 
$L_p$ as $p\to\infty$. 
\end{abstract}

\maketitle
\setcounter{section}{-1}
\tableofcontents


\section{Introduction}\label{s0}
The familiar setting of geometric quantization
is a compact K\"ahler manifold $(X, \om)$ with K\"ahler form 
$\om$ endowed with a Hermitian holomorphic line bundle $(L, h^L)$,
called prequantum line bundle, satisfying the prequantization condition
\begin{equation}\label{0.1}
\om =\frac{\sqrt{-1}}{2\pi}R^{L}=c_{1}(L, h^{L}),
 \end{equation}
where $R^{L}$ denotes the curvature of the Chern connection on 
$(L, h^L)$ and
$c_{1}(L, h^{L})$ denotes the Chern curvature form of $(L, h^{L})$.
The existence of the prequantum line bundle $(L, h^{L})$ allows
to consider the Hilbert space of holomorphic sections $H^0(X,L)$
and construct a correspondence between smooth objects on $X$
(classical observables) and operators on $H^0(X,L)$ (quantum observables)
\cite{Ber,Do01},
stated in terms of the semi-classical limit in which
Planck's constant tends to zero.
Changing Planck's constant is equivalent to rescaling the K\"ahler form,
and this is achieved by taking tensor powers $L^{p} = L^{\otimes p}$ 
of the line bundle, since the
curvature of $L^{p}$ is $p\om$.
A pivotal role in this correspondence is played by the orthogonal projection
on $H^{0}(X, L^p)$.  Its integral kernel, called Bergman kernel, 
admits a full asymptotic
expansion as $p\to\infty$ to any order 
(cf.\ \cite{C99,DLM04,MM07,MM08,T90,Z98}).

Condition \eqref{0.1} in an integrality condition; 
a prequantum bundle exists if and only if 
the de Rham cohomology class $[\omega]$ is integral,
$[\omega]\in H^2(X,\Z)$.
What can one do in general if $\om$ is a not necessarily integral K\"ahler
form?
We can then associate to $\om$ a more general 
sequence positive line bundle $(L_p, h_p)$ 
such that their curvatures only approximate
multiples of $\omega$.
Such a sequence can
be thought as a ``prequantization'' of the nonintegral K\"ahler metric
$\omega$.

In this paper we establish the asymptotic expansion of 
the Bergman kernel of the holomorphic
space $H^{0}(X, L_{p})$ on K\"ahler manifold $X$ under 
a natural approximation assumption
of $\omega$ by the curvatures of the positive line bundles $L_p$.

Let $(X,\vartheta,J)$ be a compact K\"ahler manifold of $\dim_{\C}X=n$
with K\"ahler form $\vartheta$ and
complex structure $J$. Let $(L_p, h_p)$, $p\geqslant 1$, be 
a sequence of holomorphic Hermitian line
bundles on $X$ with smooth Hermitian metrics $h_{p}$. 
Let $\nabla^{L_p}$ be the Chern connection on
$(L_p, h_p)$ with curvature $R^{L_{p}} = (\nabla^{L_{p}})^{2}$.
Denote by $c_1(L_p, h_p)$ the Chern
curvature form of $(L_p, h_p)$.
Let $g^{TX}(\cdot, \cdot) = \vartheta(\cdot, J\cdot)$ be 
the Riemannian metric
on $TX$ induced by $\vartheta$ and $J$. The Riemannian volume 
form $dv_{X}$ has
the form $dv_X = \vartheta^{n}/n!$. We endow the space 
$\cC^{\infty}(X, L_p)$ of smooth sections of $L_p$
with the inner product 
\begin{equation}\label{e:ip}
\langle s_1,s_2\rangle_p:=\int_{X}\langle s_1,s_2\rangle_{h_p}\,
\frac{\vartheta^n}{n!}\,,\;\,s_1,s_2\in\cC^{\infty}(X, L_p),
\end{equation}
and we set $\|s\|_p^2=\langle s,s\rangle_p$.
We denote by $\mL^{2}(X,L_p)$ the completion of $\cC^{\infty}(X, L_p)$
with respect to this norm.
Let $H^{0}(X, L_p)$ be the space of holomorphic sections 
of $L_{p}$ and
let $P_{p}:\mL^{2}(X,L_p) \rightarrow H^{0}(X,L_p)$ be 
the orthogonal projection.
The integral kernel $P_p(x, x')$ $(x, x'\in X)$ of $P_p$ with respect
to $dv_{X}(x')$ is smooth and
is called the Bergman kernel. The restriction of the
Bergman kernel to the diagonal of $X$ is the Bergman kernel function
of $H^{0}(X,L_p)$, which
we still denote by $P_p$, i.e., $P_{p}(x)=P_{p}(x, x)$.

The main result of this paper is as follows.
\begin{thm}\label{t1}
Let $(X, \vartheta)$ be a compact K\"ahler manifold of $\dim_{\C}X=n$. 
Let $(L_p, h_p)$,
$p\geqslant 1$, be a sequence of holomorphic Hermitian 
line bundles on $X$ with smooth Hermitian
metrics $h_{p}$.
Let $\omega$ be a K\"ahler form on $X$ such that
\begin{equation}\label{0.6}
A^{-1}_{p}c_1(L_p, h_p) = \omega + O(A^{-a}_{p}),\ 
\text{as $p\to\infty$, in the $\cC^{\infty}$- topology,}
\end{equation}
where $a>0$, $A_p >0$ and $\lim_{p\rightarrow \infty}A_p = +\infty$.
Then as $p\to\infty$, in the $\cC^{\infty}$- topology,
\begin{equation}\label{0.7a}
\begin{split}
P_p(x)=A^{n}_{p}\bb_{0}(x)&+A^{n-1}_{p}\bb_{1}(x)
+\ldots +A^{n+ \lfloor -a \rfloor +1}_{p}
\bb_{-\lfloor -a \rfloor-1}(x)+O(A_{p}^{n-a}),
\end{split}
\end{equation}
where $\bb_{0}(x)=\omega^n/\vartheta^{n}$ and 
$\bb_{1}=\frac{1}{8\pi}(\omega^n/\vartheta^{n})r^X_\omega$,
where $r^X_\omega$ is the scalar curvature of $\omega$, 
and $\lfloor -a \rfloor$ the integer part of $-a$. 
\end{thm}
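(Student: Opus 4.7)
The plan is to adapt the analytic localization and rescaling technique of Dai--Liu--Ma and Ma--Marinescu for the Bergman kernel expansion of $L^{\otimes p}$ to the present setting, in which $p$ is replaced by the scaling parameter $A_p$ and $L^{\otimes p}$ is replaced by the varying sequence $(L_p,h_p)$. I would first establish a uniform spectral gap for the Kodaira Laplacian $\Box_p=\db\,\db^{*}+\db^{*}\db$ on $\mL^2(X,L_p\otimes \Lambda^{0,\bullet}T^{*}X)$: the Bochner--Kodaira--Nakano formula combined with $c_1(L_p,h_p)\geq \tfrac{1}{2}A_p\omega$ (valid for large $p$ by \eqref{0.6}) yields $\spec(\Box_p)\subset\{0\}\cup[cA_p,+\infty)$ with $c>0$ independent of $p$, so that $P_p = \frac{1}{2\pi\imat}\oint_{\g}(\lambda-\Box_p)^{-1}\,d\lambda$ for a small contour $\g$ around $0$. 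Finite-propagation-speed arguments for the wave operator of the Dirac operator $\sqrt{2}(\db+\db^{*})$ then imply that $P_p(x,x')$ decays faster than any negative power of $A_p$ outside any fixed neighborhood of the diagonal, reducing the proof to a local computation near each $x_0\in X$.

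Fixing $x_0$, I would choose local complex coordinates $Z$ centered at $x_0$, a holomorphic frame of $L_p$ in which $h_p=e^{-\varphi_p}$, and introduce the parabolic rescaling $u=\sqrt{A_p}\,Z$. Hypothesis \eqref{0.6}, combined with a standard gauge normalization of $\varphi_p$ (so that $\varphi_p(0)=0$ and $d\varphi_p(0)=0$), yields an expansion
\[
\varphi_p\bigl(u/\sqrt{A_p}\bigr) = \mathcal{R}(u) + \sum_{j=1}^{N} A_p^{-j/2}\mathcal{R}_j(u) + O\bigl(A_p^{-a}\bigr)
\]
uniformly on compact sets in $u$, where $\mathcal{R}$ is the quadratic K\"ahler potential for $\omega_{x_0}$. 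The rescaled operator $\mathcal{L}_p:=A_p^{-1}\Box_p$ then has a matching expansion $\mathcal{L}_p=\mathcal{L}_0+\sum_{j\geq 1}A_p^{-j/2}\mathcal{L}_j+O(A_p^{-a})$, whose leading term $\mathcal{L}_0$ is the model harmonic oscillator on $\C^n$ associated with $(\omega_{x_0},g^{TX}_{x_0})$ and whose Bergman projector $P^{0}$ is given explicitly by the Bargmann--Fock kernel.

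Expanding the resolvent $(\lambda-\mathcal{L}_p)^{-1}$ as a Neumann series around $(\lambda-\mathcal{L}_0)^{-1}$, truncating after enough terms to reach the stated remainder, and carrying out the contour integral produces the rescaled kernel as $P^{0}+\sum_{j\geq 1}A_p^{-j/2}F_j+O(A_p^{-a})$. By the standard parity argument of Dai--Liu--Ma (the symmetry $u\mapsto -u$ of the model), only integer powers of $A_p$ survive on the diagonal, so reinstating the Jacobian $A_p^n$ and evaluating at the origin yields \eqref{0.7a}. The leading coefficient is the model Bergman projector at $0$, and the discrepancy between the $\mL^2$ inner product \eqref{e:ip} (using $\vartheta^n/n!$) and the natural K\"ahler volume of the model (using $\omega^n_{x_0}/n!$) produces the factor $\bb_0=\omega^n/\vartheta^n$; the coefficient $\bb_1$ then follows from Lu's subleading computation, multiplied by the same volume ratio, yielding $\bb_1=\tfrac{1}{8\pi}(\omega^n/\vartheta^n)r^X_\omega$.

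The main obstacle is keeping all remainder estimates uniform in $p$ in the $\cC^\infty$ topology, despite the Hermitian metric $h_p$ varying with $p$ and despite $a$ being a possibly small non-integer positive real that simultaneously controls the length of the usable expansion and the size of the error. This forces uniform Sobolev-type bounds for the rescaled operators $\mathcal{L}_p$ with constants independent of $A_p$, and careful bookkeeping of how the $A_p^{-1/2}$-expansion from Taylor expansions of the geometric data interacts with the $A_p^{-a}$-error from \eqref{0.6}, so that the expansion terminates precisely at the index $-\lfloor -a\rfloor-1$ and the two sources of error combine into the single remainder $O(A_p^{n-a})$ stated in the theorem.
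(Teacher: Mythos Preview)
Your proposal is correct and follows essentially the same approach as the paper: spectral gap via Bochner--Kodaira--Nakano, localization by finite propagation speed, rescaling by $A_p^{-1/2}$ to the model harmonic oscillator on $\C^n$, resolvent expansion around the model, and the parity argument on the diagonal. The paper's only notable implementation choices beyond your outline are to use holomorphic coordinates together with an explicitly constructed holomorphic frame $\sigma_p=e^{f_p}S_p$ whose $\cC^k$ norms are controlled by $\|R^{L_p}\|_{\cC^{k+n+1}}$ (this is precisely what secures the uniformity in $p$ you flag as the main obstacle), and to organize the interaction of the Taylor remainder $t^k$ with the curvature error $t^{2a}$ by writing $a=a_0+a_1$ with $a_0\in\N$, $0<a_1\leq1$, and splitting into the cases $a_1\leq\tfrac12$ and $a_1>\tfrac12$ when expanding the rescaled operator.
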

Note that the following general result was obtained
in \cite[Theorem 1.2]{CMM17}. Let $(X,\vartheta)$ be a 
compact K\"ahler manifold of dimension $n$ and $(L_p,h_p)$,
$p\geq1$, be a sequence of holomorphic Hermitian 
line bundles on $X$ with singular Hermitian
metrics $h_{p}$ that satisfy  
$c_1(L_p,h_p)\geq a_p\,\vartheta$, where $a_p>0$ and 
$\lim_{p\to\infty}a_p=\infty$\,.
If $A_p=\int_Xc_1(L_p,h_p)\wedge\vartheta^{n-1}$ denotes the mass
of the current $c_1(L_p,h_p)$, then 
$\frac{1}{A_p}\,\log P_p\to 0$ in $L^1(X,\vartheta^n)$ 
as $p\to\infty$\,. Theorem \ref{t1} refines this result
under the stronger assumptions that the metrics are smooth
and \eqref{0.6} holds. 

Our assumption \eqref{0.6} means that for any $k\in\N$, 
there exists $C_{k}>0$ such that
\begin{equation}\label{0.8}
\big|A_p^{-1}c_1(L_{p}, h_{p})-\omega\big|_{\cC^{k}}
\leqslant C_{k}A_{p}^{-a},
\end{equation}
where the $\cC^{k}$-norm is induced by the 
Levi-Civita connection $\nabla^{TX}$.
We will give several natural examples of sequences
$(L_p,h_p)$ as above. The most straightforward is
$(L_{p},h_p)=(L^{\otimes p},h^{\otimes p})$ for some fixed prequantum 
line bundle 
$(L,h)$. Then it follows from \eqref{0.1} that \eqref{0.6} holds
for $A_{p}=p$ and all $a>0$.
In this case we recover from (\ref{0.7a}) the known result
on asymptotic expansion of Bergman kernel of $H^{0}(X, L^p)$
(cf.\ \cite{C99, DLM04, MM07, T90, Z98}).
Other examples include $(L_{p},h_p)=(L^{\otimes p},h_p)$
where $h_p$ is not necessarily the product $h^p$, e.\,g.\
$h_p=h^pe^{-\varphi_p}$, with suitable weights $\varphi_p$,
or tensor powers of several bundles, see Theorem \ref{t2}.

Our approximation assumption \eqref{0.6} (or \eqref{0.8}) is natural 
in the following sense. Given a K\"ahler form $\omega$
one can first approximate its cohomology class $[\omega]\in H^2(X,\R)$
by integral classes in $H^2(X,\Z)$ by using diophantine approximation
(Kronecker's lemma) and then one constructs smooth forms
representing these approximating classes.
By \cite[Th\'eor\`eme 1.3, p.\ 57]{Laeng} condition \eqref{0.8}
holds true for any $k$, $A_{p}=p$ and $a=1+1/\beta_{2}(X)$, 
where $\beta_{2}(X)$ denotes the second Betti number of $X$,
but in general with a non-necessarily holomorphic Hermitian line bundle
$(L_p,h_p)$. In this paper we show that if there is a good diophantine
approximation with holomorphic line bundles we obtain 
corresponding good asymptotics of the Bergman kernel.

If $0<a<1$, then Theorem \ref{t1} reduces to the following result.
\begin{cor}\label{t1a}
Let $(X,\vartheta)$ be a compact K\"ahler manifold of $\dim_{\C}X=n$. 
Let $(L_p, h_p)$,
$p\geqslant 1$, be a sequence of holomorphic Hermitian line bundles
on $X$ with smooth Hermitian
metrics $h_{p}$.
Assume that there exists a K\"ahler form $\omega$ on $X$ such that
\begin{equation}\label{0.8a}
A^{-1}_{p}c_1(L_p, h_p) = \omega + O(A^{-a}_{p}),\
\text{as $p\to\infty$, in the $\cC^{\infty}$- topology,}
 \end{equation}
where $0 <a< 1$, $A_p >0$ and $\lim_{p\rightarrow \infty}
A_p = +\infty$. Then
\begin{equation}\label{0.8b}
P_p(x) = A^{n}_{p}\bb_{0}(x)+O(A_{p}^{n-a}),\
\text{as $p\to\infty$, in the $\cC^{\infty}$- topology,}
\end{equation}
where $\bb_{0}(x)= \omega^n/\vartheta^{n}$.
\end{cor}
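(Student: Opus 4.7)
The plan is to derive Corollary \ref{t1a} as a direct specialization of Theorem \ref{t1}. The hypothesis \eqref{0.8a} of the corollary is literally the hypothesis \eqref{0.6}, with the same K\"ahler form $\omega$ and the same normalizing scale $A_p$, so no new analytic input is needed; the entire content of the corollary is to record which terms of the expansion \eqref{0.7a} survive once $a$ is restricted to the interval $(0,1)$.

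The concrete step is a floor-function bookkeeping. For $0<a<1$ one has $-a\in(-1,0)$, hence $\lfloor -a\rfloor=-1$. The indices on the coefficients $\bb_j$ appearing in \eqref{0.7a} then range from $j=0$ up to $j=-\lfloor -a\rfloor-1=0$, so the displayed sum collapses to the single leading term $A_p^{n}\bb_0(x)$. Every subsequent term of the full asymptotic expansion, starting with $A_p^{n-1}\bb_1$, is automatically absorbed into the remainder, since $n-1<n-a$ when $a<1$. Taking $\bb_0(x)=\omega^n/\vartheta^n$ from Theorem \ref{t1} then reproduces \eqref{0.8b} verbatim, including the $\cC^\infty$-uniformity of the remainder.

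There is no substantive obstacle here: the argument consists of an invocation of the main theorem plus the computation of one integer part. The only point one might want to confirm explicitly is that the leading contribution $A_p^{n}\bb_0$ is indeed of strictly larger order than the error $A_p^{n-a}$, so that \eqref{0.8b} is a genuine asymptotic; this is immediate from $a>0$ together with $A_p\to\infty$, and moreover $\bb_0=\omega^n/\vartheta^n>0$ since both $\omega$ and $\vartheta$ are K\"ahler forms on $X$.
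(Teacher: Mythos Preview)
Your proposal is correct and matches the paper's own treatment: the paper simply states that for $0<a<1$ Theorem~\ref{t1} reduces to this corollary, and your floor computation $\lfloor -a\rfloor=-1$ making the sum in \eqref{0.7a} collapse to the single term $A_p^n\bb_0(x)$ is exactly the intended justification.
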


Note that a similar result was obtained in 
\cite[Theorem 1.3]{CMM17} under different approximation
assumptions.

An interesting situation when the previous results apply is 
when $L_p$ equals a product of
tensor powers of several holomorphic line bundles, 
$L_p=F_1^{m_{1,p}}\otimes\ldots\otimes F_k^{m_{k,p}}$, 
where $\{m_{j,p}\}_p$\,, $1\leq j\leq k$, are sequences in $\N$ 
such that $m_{j,p}=r_j\,p+O(p^{1-a})$ as $p\to\infty$, where $a\geq2$
and $r_j>0$ are given. This means that $(m_{1,p},\ldots,m_{k,p})\in\N^k$ 
approximate the semiclassical ray 
$\R_{>0}\cdot(r_1,\ldots,r_k)\in\R_{>0}^k$ with a remainder 
$O(p^{1-a})$, as $p\to\infty$ (cf.\ also \cite[Corollary 5.11]{CMM17}).
\begin{thm}\label{t2}
Let $(X, \vartheta)$ be a compact K\"ahler manifold of $\dim_{\C}X=n$.
Let $(F_j , h^{F_j})$ be smooth holomorphic Hermitian line bundles on
$X$ with
$c_1(F_j , h^{F_j}) \geqslant 0$ for $1 \leqslant j\leqslant k$
and one of them is strictly positive, say, 
$c_1(F_1, h^{F_1}) \geqslant \var \vartheta$
for some $\var>0$. Let $r_j>0$, $1\leqslant j \leqslant k$,
be positive real numbers and set
$\omega =\sum_{j=1}^{k} r_{j} c_1(F_j , h^{F_j})$.
Assume that there exist sequences $\{m_{j,p}\}_{p}$, 
$1\leqslant j\leqslant k$,
in $\N$ and $a\geqslant 2$, $C>0$ such that
\begin{equation}
\Big|\frac{m_{j,p}}{p}-r_{j}\Big|\leqslant \frac{C}{p^{a}}, \quad
1 \leqslant j \leqslant k,\ for\ p > 1.
\end{equation}
Let $P_p$ be the Bergman kernel function of 
$H^{0}(X, F_{1}^{m_{1, p}}\otimes\ldots\otimes  F^{m_{k, p}}_{k})$.
Then
\begin{eqnarray}
&P_{p}(x)=p^{n}\bb_{0}(x) 
+ p^{n-1}\bb_{1}(x)+ \ldots + p^{n-k}\bb_k(x)+O(p^{n-a}),\
\\&\text{as $p\to\infty$, in the $\cC^{\infty}$- topology,}
\nonumber 
\end{eqnarray}
where $k =-\lfloor -a \rfloor -1$, $\bb_{0}(x) = \omega^n/\vartheta^{n}$ and 
$\bb_{1}=\frac{1}{8\pi}(\omega^n/\vartheta^{n})r^X_\omega$,
where $r^X_\omega$ is the scalar curvature of $\omega$.
\end{thm}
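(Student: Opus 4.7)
The plan is to deduce Theorem \ref{t2} directly from Theorem \ref{t1} applied to the holomorphic Hermitian line bundles
\begin{equation*}
(L_p,h_p):=\bigl(F_1^{m_{1,p}}\otimes\cdots\otimes F_k^{m_{k,p}},\;
(h^{F_1})^{\otimes m_{1,p}}\otimes\cdots\otimes(h^{F_k})^{\otimes m_{k,p}}\bigr),
\end{equation*}
with the choice $A_p=p$. The entire argument then boils down to verifying hypothesis \eqref{0.6}. First I would check that $\omega=\sum_{j=1}^k r_j c_1(F_j,h^{F_j})$ is genuinely a Kähler form: each summand is smooth, closed, real, of type $(1,1)$ and semipositive, and since $c_1(F_1,h^{F_1})\geq\varepsilon\vartheta$ with $r_1>0$, one obtains $\omega\geq r_1\varepsilon\vartheta>0$, so $\omega$ is Kähler.

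Next, using the additivity of the Chern curvature on tensor products, $c_1(L_p,h_p)=\sum_{j=1}^k m_{j,p}\,c_1(F_j,h^{F_j})$, so
\begin{equation*}
p^{-1}c_1(L_p,h_p)-\omega=\sum_{j=1}^k\Big(\frac{m_{j,p}}{p}-r_j\Big)c_1(F_j,h^{F_j}).
\end{equation*}
Each $c_1(F_j,h^{F_j})$ is a \emph{fixed} smooth $(1,1)$-form on the compact manifold $X$, so every $\cC^\ell$-norm (with respect to $\nabla^{TX}$) is a finite constant independent of $p$. Multiplying by the diophantine bound $|m_{j,p}/p-r_j|\leq Cp^{-a}$ yields, for every $\ell\in\N$,
\begin{equation*}
\big|p^{-1}c_1(L_p,h_p)-\omega\big|_{\cC^\ell}\leq C_\ell\,p^{-a},
\end{equation*}
which is exactly \eqref{0.8} with $A_p=p$ and the same exponent $a$.

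It now suffices to invoke Theorem \ref{t1}: its expansion \eqref{0.7a} with $A_p=p$ reads $P_p(x)=p^n\bb_0(x)+p^{n-1}\bb_1(x)+\cdots+p^{n+\lfloor-a\rfloor+1}\bb_{-\lfloor-a\rfloor-1}(x)+O(p^{n-a})$, which after setting $k=-\lfloor-a\rfloor-1$ (well-defined and $\geq 1$ since $a\geq 2$) is the stated formula. The coefficients $\bb_0=\omega^n/\vartheta^n$ and $\bb_1=\frac{1}{8\pi}(\omega^n/\vartheta^n)r^X_\omega$ transfer verbatim, because the Kähler form $\omega$ is the same in both statements. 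Honestly, there is no real obstacle here; the one point that could, in principle, cause trouble would be if the approximation held only in finitely many derivatives, but this is ruled out since the $c_1(F_j,h^{F_j})$ are $p$-independent smooth forms and the factor $p^{-a}$ therefore propagates simultaneously into every seminorm. Thus Theorem \ref{t2} is an immediate specialization of Theorem \ref{t1}.
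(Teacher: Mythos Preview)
Your proposal is correct and is precisely the argument the paper intends: Theorem \ref{t2} is stated immediately after Theorem \ref{t1} as the case $L_p=F_1^{m_{1,p}}\otimes\cdots\otimes F_k^{m_{k,p}}$, $A_p=p$, and the paper gives no separate proof, treating the verification of \eqref{0.6} as immediate. Your check that $\omega$ is K\"ahler and that the diophantine bound propagates to every $\cC^\ell$-norm (because the $c_1(F_j,h^{F_j})$ are fixed smooth forms) is exactly what is needed.
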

\bigskip
 
We apply Theorem \ref{t1} to study the asymptotic distribution 
of common zeros of random sequences of $m$-tuples of sections 
of $L_p$ as $p\to\infty$, see 
\cite{CMM17,CM13a,CM13b,CM15,CMN16,DMV12,DS06,SZ99,SZ08} 
for previous results and references.
Let $(X,\omega)$ be
a compact K\"ahler manifold 
of dimension $n$ and let $(L_p,h_p)$, $p\geqslant1$, 
be a sequence of Hermitian holomorphic line bundles on $X$. 
To study the equidistribution problem in a more general frame, 
we assume that  the metrics $h_p$ are of class $\cC^2$ and 
verify condition \eqref{0.8} for $k=0$. Namely, 
there exist a K\"ahler form $\omega$ on $X$ and $a>0$, $C_0>0$, 
such that for every $p\geqslant1$ we have 
\begin{equation}\label{e:T}
\big|A_p^{-1}c_1(L_{p}, h_{p})-\omega\big|_{\cC^{0}}\leqslant 
C_0A_p^{-a}\,,\:\:
\text{where $A_p >0$ and $\lim_{p\rightarrow \infty}A_p=\infty$.}
\end{equation} 
 
As before we endow the space of global holomorphic sections 
$H^0(X,L_p)$ with the inner product \eqref{e:ip}
and we set $\|s\|_p^2=\langle s,s\rangle_p$, $d_p=\dim H^0(X,L_p)$. 
Let $P_p$ be the Bergman kernel function of $H^0(X,L_p)$. 
We assume that there exist a constant $M_0>1$ and $p_0\in\N$ 
such that  
\begin{equation}\label{e:Bkf}
\frac{A_p^n}{M_0}\leqslant P_p(x)\leqslant M_0A_p^n,
\end{equation}
holds for every $x\in X$ and $p>p_0$. 
Note that, under the stronger hypothesis \eqref{0.6}, 
condition \eqref{e:Bkf} follows easily from
Theorem \ref{t1} (see \eqref{0.7a}). 

Given $m\in\{1,\ldots,n\}$ and $p\geqslant1$ 
we consider the multi-projective space 
\begin{equation}\label{e:Xpm}
{\mathbb X}_{p,m}:= \big(\P H^0(X,L_p)\big)^m,
\end{equation} 
equipped with the probability measure $\sigma_{p,m}$ 
which is the $m$-fold product of the Fubini-Study 
volume on $\P H^0(X,L_p)\simeq\P^{d_p-1}$. 
If $s\in H^0(X,L_p)$ we denote by $[s=0]$ the current of 
integration (with multiplicities) over the analytic hypersurface 
$\{s=0\}\subset X$, and we let  
\[[\bfs_p=0]:=[s_{p1}=0]\wedge\ldots\wedge[s_{pm}=0]\,,\,\text{ for 
$\bfs_p=(s_{p1},\ldots,s_{pm})\in {\mathbb X}_{p,m}$,}\]
whenever this current is well-defined (see Section \ref{S:equidist}). 
We also consider the probability space
\[({\mathbb X}_{\infty,m},\sigma_{\infty,m}):= 
\prod_{p=1}^\infty ({\mathbb X}_{p,m},\sigma_{p,m})\,.\]

In the above setting, we have the following theorem:

\begin{thm}\label{T:equidist}
Let $(X,\vartheta)$ be a compact K\"ahler manifold of dimension $n$ 
and let $(L_p,h_p)$, $p\geqslant1$, be a sequence of Hermitian 
holomorphic line bundles on $X$ with metrics $h_p$ of class $\cC^2$. 
Assume that conditions \eqref{e:T} and \eqref{e:Bkf} hold. 
Then there exist $C>0$ and $p_1\in\N$ 
such that for every $\beta>0$, $m\in\{1,\ldots,n\}$ and $p>p_1$ 
there exists a subset $E_{p,m}^\beta\subset{\mathbb X}_{p,m}\,$ 
with the following properties:

\smallskip

(i) $\sigma_{p,m}(E_{p,m}^\beta)\leqslant CA_p^{-\beta}$;

\smallskip

(ii) if $\bfs_p\in{\mathbb X}_{p,m}\setminus E_{p,m}^\beta$ then, 
for any $(n-m,n-m)$ form $\phi$ of class $\cC^2$ on $X$,
\begin{equation}\label{e:equisp}
\Big|\Big \langle\frac{1}{A_p^m}\,[\bfs_p=0]
-\omega^m,\phi\Big\rangle\Big|
\leqslant C\,\Big((\beta+1)\,\frac{\log A_p}{A_p}+
A_p^{-a}\Big)\,\|\phi\|_{\cC^2}\,.
\end{equation}
Moreover, if $\sum_{p=1}^\infty A_p^{-\beta}<+\infty$ 
then estimate \eqref{e:equisp} holds for 
$\sigma_{\infty,m}$-a.e.\ sequence 
$\{\bfs_p\}_{p\geqslant1}\in{\mathbb X}_{\infty,m}$ 
provided that $p$ is large enough.
\end{thm}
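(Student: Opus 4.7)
The approach adapts the Shiffman--Zelditch / Dinh--Sibony equidistribution method, as developed in \cite{CMM17} for sequences of singular metrics, to the present smooth-metric setting. The plan is to reduce the pairing $\langle A_p^{-m}[\bfs_p=0]-\omega^m,\phi\rangle$ to $m$ pairings involving single-section potentials $A_p^{-1}\log|s_{pj}|_{h_p}$ via iterated Poincar\'e--Lelong, and then to bound these potentials probabilistically by combining the Bergman kernel reproducing property with \eqref{e:Bkf}.

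\emph{Step 1 (Poincar\'e--Lelong expansion).} Writing $[s_{pj}=0]-A_p\omega=(c_1(L_p,h_p)-A_p\omega)+dd^c\log|s_{pj}|_{h_p}$ and telescoping $\bigwedge_j[s_{pj}=0]-\bigwedge_j(A_p\omega)$ yields
\begin{equation*}
[\bfs_p=0]-A_p^m\omega^m=\sum_{j=1}^m\bigl((c_1(L_p,h_p)-A_p\omega)+dd^c\log|s_{pj}|_{h_p}\bigr)\wedge R_{p,j},
\end{equation*}
where $R_{p,j}=(A_p\omega)^{j-1}\wedge\bigwedge_{k>j}[s_{pk}=0]$ is a closed positive $(m-1,m-1)$-current of mass $O(A_p^{m-1})$. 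Pairing with $\phi$ and integrating $dd^c$ by parts (using $dR_{p,j}=0$) gives
\begin{equation*}
\frac{\langle[\bfs_p=0]-A_p^m\omega^m,\phi\rangle}{A_p^m}=\sum_{j=1}^m\Big\langle\frac{c_1(L_p,h_p)-A_p\omega}{A_p^m},R_{p,j}\wedge\phi\Big\rangle+\sum_{j=1}^m\frac{1}{A_p^m}\int_X\log|s_{pj}|_{h_p}\,dd^c\phi\wedge R_{p,j}.
\end{equation*}
By \eqref{e:T}, each curvature summand is bounded by $CA_p^{-a}\|\phi\|_{\cC^0}$.

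\emph{Step 2 (logarithm estimate via the Bergman kernel).} The reproducing property of $P_p$ produces, for each $x\in X$, a unit vector $u_{p,x}\in H^0(X,L_p)$ with $|s(x)|_{h_p}=\sqrt{P_p(x)}\,|\langle s,u_{p,x}\rangle_p|$ for every unit $s$; combined with \eqref{e:Bkf} this yields
\begin{equation*}
\log|s(x)|_{h_p}=\tfrac{n}{2}\log A_p+O(1)+\log|\langle s,u_{p,x}\rangle_p|,\qquad x\in X.
\end{equation*}
For fixed $x$ the random variable $|\langle s,u_{p,x}\rangle_p|^2$ is Beta-distributed on $\mathbb{X}_{p,1}$, hence $\sigma_{p,1}\{|\langle s,u_{p,x}\rangle_p|<t\}\leq d_p t^2$ with $d_p=\dim H^0(X,L_p)=O(A_p^n)$ by integrating \eqref{e:Bkf}. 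A Lipschitz estimate for $x\mapsto u_{p,x}$ (from near-diagonal Bergman kernel bounds implied by \eqref{e:T}) combined with a union bound over a polynomial-density net in $X$ produces a set $F_p^\beta\subset\mathbb{X}_{p,1}$ with $\sigma_{p,1}(F_p^\beta)\leq CA_p^{-\beta}$ outside of which $\sup_X|\log|s|_{h_p}|\leq C(\beta+1)\log A_p$. Inserting this bound into Step 1 and using $\mathrm{mass}(R_{p,j})=O(A_p^{m-1})$ yields
\begin{equation*}
\frac{1}{A_p^m}\Big|\int_X\log|s_{pj}|_{h_p}\,dd^c\phi\wedge R_{p,j}\Big|\leq C(\beta+1)\frac{\log A_p}{A_p}\|\phi\|_{\cC^2}.
\end{equation*}

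\emph{Step 3 (assembly and Borel--Cantelli).} Set $E_{p,m}^\beta:=\bigcup_{j=1}^m\{\bfs_p:s_{pj}\in F_p^\beta\}$; then $\sigma_{p,m}(E_{p,m}^\beta)\leq CmA_p^{-\beta}$, which is (i). Summing the $m$ single-section estimates from Step 2 together with the $m$ curvature terms of Step 1 yields \eqref{e:equisp}. The almost-sure statement follows from the first Borel--Cantelli lemma applied to $\sigma_{\infty,m}=\prod_p\sigma_{p,m}$ once $\sum A_p^{-\beta}<\infty$. The \emph{main obstacle} is the rigorous definition of the currents $R_{p,j}$ and of the pairings with $\log|s_{pj}|_{h_p}$, since the zero divisors $\{s_{pk}=0\}$ need not be in general position; as in \cite{CMM17} this is handled via Dinh--Sibony intersection theory, using the fact that off $E_{p,m}^\beta$ the potentials $A_p^{-1}\log|s_{pj}|_{h_p}$ are uniformly bounded on $X$ by Step 2, so all wedge products and integrations by parts are justified in the Bedford--Taylor sense.
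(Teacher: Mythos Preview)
Your approach is genuinely different from the paper's, but Step~2 contains a fatal error. You claim that outside a set $F_p^\beta$ of small measure one has $\sup_X|\log|s|_{h_p}|\leq C(\beta+1)\log A_p$, and you reiterate in the last paragraph that ``the potentials $A_p^{-1}\log|s_{pj}|_{h_p}$ are uniformly bounded on $X$''. This is impossible: for any non-trivial $s\in H^0(X,L_p)$ on a compact K\"ahler manifold with $c_1(L_p,h_p)>0$, the zero set $\{s=0\}$ is a non-empty hypersurface, so $\log|s|_{h_p}=-\infty$ there and no pointwise lower bound can hold. A net/union-bound argument over finitely many points cannot prevent this, no matter how fine the net. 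What is actually available is an upper bound $\log|s|_{h_p}\leq\tfrac12\log P_p$ together with \emph{integrated} lower bounds (an $L^1$ estimate for $\log|s|_{h_p}$ against a fixed smooth volume form). But your Step~1 pairs $\log|s_{pj}|_{h_p}$ against the signed measure $dd^c\phi\wedge R_{p,j}$, where $R_{p,j}$ is a current of integration supported on $\bigcap_{k>j}\{s_{pk}=0\}$; an $L^1$ bound against $\vartheta^n$ does not control this integral, and your Bedford--Taylor remark only addresses \emph{existence} of the wedge products, not the quantitative bound. There is a second gap: you invoke ``near-diagonal Bergman kernel bounds implied by \eqref{e:T}'' to get a Lipschitz estimate on $x\mapsto u_{p,x}$, but the hypotheses of the theorem are only the $\cC^0$ curvature condition \eqref{e:T} and the diagonal bound \eqref{e:Bkf} for $\cC^2$ metrics --- no off-diagonal decay is assumed or derived.

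The paper circumvents both issues by inserting the Fubini--Study current $\gamma_p$ and splitting the problem in two. First (Propositions~\ref{P:MT1}--\ref{P:MT2}) it compares $[\bfs_p=0]$ to $\gamma_p^m$ using the Dinh--Sibony theory of meromorphic transforms \cite{DS06}: one regards $\bfs_p\mapsto\{\bfs_p=0\}$ as the fiber of a meromorphic transform $\Phi_{p,m}:X\dashrightarrow\mathbb{X}_{p,m}$, and the probabilistic estimate comes from abstract bounds on the functions $\Delta(\mathbb{X}_{p,m},\omega_{p,m},\sigma_{p,m},t)$ and $R(\mathbb{X}_{p,m},\omega_{p,m},\sigma_{p,m})$ for products of projective spaces, computed in the appendix of \cite{DS06}. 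This packages the delicate pluripotential theory once and for all and needs only the intermediate degrees $\delta^1_{p,m},\delta^2_{p,m}$, which are controlled by the diagonal bound \eqref{e:Bkf}. Second (Proposition~\ref{P:FSspeed}) it compares $\gamma_p^m/A_p^m$ to $\omega^m$ via $\gamma_p=c_1(L_p,h_p)+\tfrac12 dd^c\log P_p$, telescoping and integrating by parts against the \emph{smooth} positive form $\rho_p=\sum_j(\gamma_p/A_p)^j\wedge\omega^{m-1-j}$; here only the two-sided bound \eqref{e:Bkf} on $P_p$ and the $\cC^0$ bound \eqref{e:T} are needed, and no unbounded psh functions are integrated against singular currents.
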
 

The question of characterizing the positive closed currents on $X$ 
which can be approximated by currents of integration along analytic subsets 
of $X$, and its local version as well, are important problems 
in pluripotential theory and have many applications. 
Results in this direction are obtained in \cite{CM13b,CMN18}. 
Theorem \ref{T:equidist} shows in particular that the smooth positive 
closed form $\omega^m$ can be approximated by currents of integration 
along analytic subsets of $X$ of dimension $n-m$, for each 
$m\in\{1,\ldots,n\}$. 


The paper is organized as follows. In Section \ref{s1} 
we show that the asymptotic expansion of the Bergman kernel can be
localized. 
In Section \ref{s2} we establish the asymptotic expansion of the Bergman 
kernel near the diagonal and then prove Theorem \ref{t1}. 
The proof of Theorem \ref{T:equidist} is given in Section \ref{S:equidist}, 
using the technique of meromorphic transforms of 
Dinh and Sibony \cite{DS06}, 
as in the papers \cite{CMN16,CMN18}.

\section{Localization of the problem}\label{s1}
In this Section we show that the problem is local by using
the Lichnerowicz formula.  

\subsection{Lichnerowicz formula}

The complex structure $J$ induces a splitting 
$TX\otimes_{\R} \C =T^{(1,0)}X\oplus T^{(0,1)}X$,
where $T^{(1,0)}X$ and $T^{(0,1)}X$ are the eigenbundles 
of $J$ corresponding to the eigenvalues
$\sqrt{-1}$ and $-\sqrt{-1}$ respectively. Let $T^{\ast(1,0)}X$ 
and $T^{\ast(0,1)}X$ be the corresponding
dual bundles. Denote by $\Omega^{0,j}(X, L_p)$ the space of 
smooth $(0, j)$-forms over $X$ with values in
$L_p$ and set 
$\Omega^{0,\bullet}(X,L_p) = \oplus^{n}_{j=0}\Omega^{0,j}(X,L_p)$. 
We still denote by
$\langle\cdot, \cdot\rangle $ the fibrewise metric on
$\Lambda(T^{\ast(0, 1)}X)\otimes L_p$ induced by $g^{TX}$ 
and $h_p$.

The $L^{2}$-scalar product on $\Omega^{0,\bullet}(X,L_p)$ is given by
\eqref{e:ip}.
Let $\ov{\partial}^{L_p,\ast}$
be the formal adjoint of the Dolbeault operator $\ov{\partial}^{L_p}$
with respect to the scalar product \eqref{e:ip}.
The Dolbeault-Dirac operator is given by
\begin{equation}
D_{p}:=\sqrt{2}\Big(\ov{\partial}^{L_p}+\ov{\partial}^{L_p,\ast}\Big):
\Omega^{0, \bullet}(X, L_p) \rightarrow \Omega^{0, \bullet}(X, L_p).
\end{equation}
The Kodaira Laplacian
\begin{equation}\label{e:kdl}
\Box^{L_p}:=\ov{\partial}^{L_p}\ov{\partial}^{L_p,\ast}
+ \ov{\partial}^{L_p,\ast}\ov{\partial}^{L_p}:
\Omega^{0, \bullet}(X, L_p) \rightarrow \Omega^{0, \bullet}(X, L_p)\,,
\end{equation}
preserves the $\Z$-grading on $\Omega^{0, \bullet}(X,L_p)$.
It is an essentially self-adjoint operator on the space
$\mL^2_{0, \bullet}(X,L^p)$, 
the $L^2$-completion of $\Omega^{0, \bullet}(X,L_p)$. 
We have
\begin{equation}\label{1.3}
D^{2}_{p}=2\,\Box^{L_p}.
\end{equation}
For any $v\in TX$ with decomposition 
$v=v_{1,0}+v_{0,1}\in T^{(1,0)}X\oplus T^{(0,1)}X$,
let $v^{\ast}_{1,0}\in T^{\ast (0,1)}X$ be the metric dual
of $v_{1,0}$. Then
$c(v)=\sqrt{2}(v^{\ast}_{1,0}\wedge-i_{v_{0, 1}})$
defines the Clifford action of $v$ on
$\Lambda(T^{\ast(0, 1)}X)$, where $\wedge$ and $i$
denote the exterior and interior product respectively.

Let $\nabla^{TX}$ denote the Levi-Civita connection on
$(TX, g^{TX})$, then its induced connection on $T^{(1, 0)}X$
is the Chern connection $\nabla^{T^{(1, 0)}X}$ 
on $(T^{(1,0)}X, h^{T^{(1, 0)}X})$,
where $h^{T^{(1, 0)}X}$ is the Hermitian metric on $T^{(1,0)}X$
induced by $g^{TX}$. The
Chern connection $\nabla^{T^{(1, 0)}X}$ on $T^{(1,0)}X$ induces
naturally a connection
$\nabla^{\Lambda(T^{\ast(0, 1)}X)}$ on $\Lambda(T^{\ast(0, 1)}X)$.
Then by \cite[p. 31]{MM07} we have for an orthonormal frame 
$\{e_j\}^{2n}_{j=1}$ of $(X, g^{TX})$, 
\begin{equation}\label{1.7}
D_p =\sum^{2n}_{j=1}c(e_j)
\nabla_{e_{j}}^{\Lambda(T^{\ast(0, 1)}X)\otimes L_{p}},
\end{equation}
with
\begin{equation}\label{1.7a}
\nabla^{\Lambda(T^{\ast(0, 1)}X)\otimes L_{p}}
=\nabla^{\Lambda(T^{\ast(0, 1)}X)} \otimes\operatorname{Id}+
\operatorname{Id}\otimes\nabla^{L_{p}}.
\end{equation}
Denote by $\Delta^{\Lambda(T^{\ast(0, 1)}X)\otimes L_{p}}$
the Bochner Laplacian
on $\Lambda(T^{\ast(0, 1)}X)\otimes L_{p}$. Then
\begin{equation}
\Delta^{\Lambda(T^{\ast(0, 1)}X)\otimes L_{p}}
= -\sum^{2n}_{j=1}\Big[\nabla_{e_{j}}^{\Lambda(T^{\ast(0, 1)}X)
\otimes L_{p}}
\nabla_{e_{j}}^{\Lambda(T^{\ast(0, 1)}X)\otimes L_{p}}
-\nabla_{\nabla^{TX}_{e_{j}}e_{j}}
^{\Lambda(T^{\ast(0, 1)}X)\otimes L_{p}}\Big].
\end{equation}
Let $K_{X}=\det(T^{\ast(1, 0)}X)$ be the canonical line bundle on $X$.
The Chern connection $\nabla^{T^{(1, 0)}}X$ on $T^{(1,0)}X$ induces 
the Chern
connection $\nabla^{K^{\ast}_{X}}$ on $K^{\ast}_{X}=\det(T^{(1,0)}X)$.
Denote by $R^{K_{X}^{\ast}}$ the curvature of $\nabla^{K^{\ast}}$
and by $r^{X}$ the scalar curvature of $(X, g^{TX})$.
The Lichnerowicz formula (cf. \cite[(1.4.29)]{MM07}) reads
\begin{equation}\label{1.10}
D^{2}_{p}= \Delta^{\Lambda(T^{\ast(0, 1)}X)\otimes L_p}
+\frac{r^{X}}{4}
+\frac{1}{2}\Big(R^{L_p} +\frac{1}{2}R^{K^{\ast}_{X}}\Big)
(e_i, e_j)c(e_i)c(e_j).
\end{equation}

\subsection{Spectral gap of the Dirac operator}
As in the case of powers $L^p$ of a single line bundle
we have a spectral gap for the square $D^2_p$ of the
Dirac operator acting on $L_p$. The result and the proof
are analogous to \cite[Theorem 1.1]{MM02}, \cite[Theorem 1.5.5]{MM07}.
For a Hermitian holomorphic line bundle $(L,h)$
on $X$ set
\begin{equation}\label{1.8}
a_{L}\coloneq\inf
\left\{\frac{R_x^{L}(u, \ov{u})}{|u|_{g^{TX}}^2}:
x\in X, u\in T_{x}^{(1, 0)}X\setminus\{0\}
\right\}.
\end{equation}
Note that $a_L(x)=\inf\big\{R_x^{L}(u, \ov{u})/|u|_{g^{TX}}^2:
u\in T_{x}^{(1, 0)}X\setminus\{0\}\big\}$
is the smallest eigenvalue of the curvature form $R_x^{L}$
with respect to $g^{TX}_x$ for $x\in X$ and 
$a_L=\inf_{x\in X}a_L(x)$.

We denote by ${\rm Spec}(A)$
the spectrum of a self-adjoint operator $A$ on a Hilbert space.

\begin{thm}\label{t1.2}
Let $(X, \vartheta)$ be a compact K\"ahler manifold. There 
exists $C>0$ such that for all Hermitian holomorphic line bundles 
$(L,h)$ on $X$ the Dirac operator $D=D_L$ on $L$ satisfies the
estimate
\begin{equation}\label{1.11a}
\big\|Ds\big\|^2_{L^2} \geqslant (2 a_{L}-C)\big\|s\big\|_{L^{2}}^{2},
\:\:s\in \Omega^{>0}(X, L)
:=\bigoplus^{n}_{j=1} \Omega^{0,j}(X, L).
\end{equation}
Moreover, ${\rm Spec}(D^2) \subset \{0\} \cup [2 a_{L}-C, \infty)$.
\end{thm}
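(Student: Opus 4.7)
The plan is to prove Theorem \ref{t1.2} via the Bochner-Kodaira-Nakano (BKN) identity after twisting by the canonical bundle $K_X^{*}$, rather than by direct application of the Lichnerowicz formula \eqref{1.10}. The Lichnerowicz approach gives a curvature term whose eigenvalue on $\Omega^{0,q}(L)$ has the form $2\sum_{l\in I}a_{l}-\sum_{l}a_{l}$ and thus involves the trace $\tau(L)=\sum_{l}a_{l}$, which cannot be bounded uniformly in $L$. BKN applied on the $(n,q)$-side, by contrast, exploits the Kähler identities so that only $R^{K_X^{*}}$, intrinsic to $X$, enters the error.

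First I would use the canonical isomorphism $\Lambda^{n,q}T^{*}X=K_X\otimes\Lambda^{0,q}T^{*}X$ to identify $\Omega^{0,q}(X,L)\simeq\Omega^{n,q}(X,E)$ with $E:=L\otimes K_X^{*}$, where $K_X^{*}$ carries the metric induced by $g^{TX}$. This identification preserves the $L^{2}$ inner products and intertwines $\bar\partial$ and $\bar\partial^{*}$, hence also $D$ and $\Box=\tfrac{1}{2}D^{2}$. On the $(n,q)$-side the Kähler identity
\[
\Box_{\bar\partial}^{E}=\Box_{\partial}^{E}+[\sqrt{-1}R^{E},\Lambda]
\]
holds, and for $s\in\Omega^{n,q}(E)$ we have $\partial^{E}s=0$ by bidegree, so $\langle\Box_{\partial}^{E}s,s\rangle=\|\partial^{E,*}s\|^{2}\geq 0$. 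Diagonalizing $R^{E}$ pointwise with eigenvalues $c_{l}(x)$, the operator $[\sqrt{-1}R^{E},\Lambda]$ acts on $\Omega^{n,q}(E)$ by the scalar $\sum_{l\in I}c_{l}(x)$ on each component with $|I|=q$. By Weyl's inequality and $a_{l}(x)\geq a_{L}$, one has $c_{l}(x)\geq a_{L}-C_{1}$, where $C_{1}:=\sup_{X}\|R^{K_X^{*}}\|$ depends only on $(X,\vartheta)$.

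Combining, for $s\in\Omega^{0,q}(L)$ with $q\geq 1$ and $a_{L}\geq 0$,
\[
\|Ds\|^{2}=2\langle\Box_{\bar\partial}^{E}s,s\rangle\geq 2q(a_{L}-C_{1})\|s\|^{2}\geq (2a_{L}-2nC_{1})\|s\|^{2};
\]
when $a_{L}\leq nC_{1}$ the target inequality is trivial because its right-hand side is nonpositive. Setting $C:=2nC_{1}$ (uniform in $(L,h)$) yields the estimate on $\Omega^{>0}(X,L)$. The spectral claim follows because $D^{2}$ preserves the $\Z$-grading: the above gives ${\rm Spec}(D^{2})|_{\Omega^{>0}}\subset[2a_{L}-C,\infty)$; on $\Omega^{0,0}(L)$ the operator $D^{2}=2\bar\partial^{*}\bar\partial$ has kernel $H^{0}(X,L)$ (contributing $0$), and any nonzero eigenvalue $\lambda$ with eigenvector $s$ produces $\bar\partial s\in\Omega^{0,1}(L)$ with $D^{2}(\bar\partial s)=\lambda\bar\partial s$, whence $\lambda\geq 2a_{L}-C$ by the estimate on $\Omega^{>0}$.

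The main obstacle is arranging Nakano positivity: directly applying BKN on $\Omega^{0,q}(L)$ yields $[\sqrt{-1}R^{L},\Lambda]$ with eigenvalue $-\sum_{l\notin I}a_{l}$, of the wrong sign when $R^{L}>0$. The twist by $K_X^{*}$ is therefore essential, and one must carefully verify that the induced metric on $K_X^{*}$ and the Dolbeault operators match under the canonical identification $\Omega^{0,q}(L)\simeq\Omega^{n,q}(E)$.
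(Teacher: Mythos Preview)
Your proof is correct and essentially the same as the paper's. The paper does \emph{not} use the Lichnerowicz formula \eqref{1.10} for this theorem (it is stated only for later use); instead it invokes the Bochner--Kodaira--Nakano identity in the form \cite[(1.4.63)]{MM07}, namely $\Box^{L}=\Delta^{0,\bullet}+R^{L\otimes K_X^{*}}(w_j,\bar w_k)\,\bar w^{k}\wedge i_{\bar w_j}$, which is precisely your $(n,q)$-BKN identity transported back to $\Omega^{0,\bullet}(X,L)$ via the isomorphism $\Omega^{0,q}(L)\simeq\Omega^{n,q}(L\otimes K_X^{*})$ that you spell out explicitly (the nonnegative operator $\Delta^{0,\bullet}$ corresponds to your $\Box_{\partial}^{E}$, and the curvature operator $R^{L\otimes K_X^{*}}(w_j,\bar w_k)\,\bar w^{k}\wedge i_{\bar w_j}$ is your $[\sqrt{-1}R^{E},\Lambda]$). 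The only cosmetic difference is that the paper bounds the $R^{L}$ and $R^{K_X^{*}}$ contributions separately rather than first combining them via Weyl's inequality, and the spectral argument via $s\mapsto Ds$ (paper) or $s\mapsto\bar\partial s$ (you) is the same idea.
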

\begin{proof}
We will use the Bochner-Kodaira-Nakano formula \cite[(1.4.63)]{MM07}.
The Chern connection on $K^{\ast}_{X}\otimes L$ is given by
\begin{equation}
 \nabla^{K^{\ast}_{X}\otimes L} = \nabla^{K^{\ast}_{X}} 
 \otimes\operatorname{Id}+\operatorname{Id}\otimes\nabla^{L},
 \end{equation}
and its curvature is 
\begin{equation}
R^{K^{\ast}_{X}\otimes L} = R^{K^{\ast}_{X}}\otimes\operatorname{Id}
+\operatorname{Id}\otimes R^{L}.
\end{equation}
Let $\{w_j\}^{n}_{j=1}$ be an orthonormal frame of $T^{(1,0)}X$.
Then \cite[(1.4.63)]{MM07} reads
\begin{equation}\label{herm24b}
\Box^{L} s=\Delta^{0,\bullet}s
+R^{L\otimes K^*_X}(w_j,\overline w_k)\overline w^k
\wedge i_{\ov{w}_j}s,\quad \mbox{for} \,\, s\in\Omega^{0,\bullet}(X,L)\,.
\end{equation}
where $\Delta^{0, \bullet}$ is
a holomorphic Kodaira type Laplacian.
Since $\langle \Delta^{0, \bullet}s, s\rangle \geqslant 0$, 
\eqref{herm24b} yields
\begin{equation}\label{1.14}
\big\|\ov{\partial}^{L}s\big\|^2
+\big\|\ov{\partial}^{L, \ast}s\big\|^2
\geqslant
\big\langle R^{K^{\ast}_{X}\otimes L}(w_j ,\ov{w}_k)\ov{w}^{k}
\wedge i_{\ov{w}_j} s, s\big\rangle\,,
\quad s\in \Omega^{0,\bullet}(X,L).
\end{equation}
Since $X$ is compact there exists $C>0$ such that
\begin{equation}\label{1.14b}
\big\langle R^{K^{\ast}_{X}}(w_j, \ov{w}_k)\ov{w}^{k}
\wedge i_{\ov{w}_{j}}s, s\big\rangle
\geqslant  -C \big\|s\big\|^2\,,\quad
s\in \Omega^{0,\bullet}(X,L).
\end{equation}
By \eqref{1.8} and \eqref{1.14b} we have
\begin{equation}\label{1.16}
\big\langle R^{L}(w_j, \ov{w}_k)\ov{w}^{k}\wedge i_{\ov{w}_{j}}s, 
s\big\rangle 
\geqslant  a_{L}\big\|s\big\|^2\,,\quad s\in \Omega^{>0}(X,L).
   \end{equation}
Then \eqref{1.11a} follows immediately from \eqref{1.3} 
and \eqref{1.14}--\eqref{1.16}.

Since $X$ is compact, $D^2$ has a discrete spectrum
consisting of eigenvalues of finite multiplicity.
Let $s\in \cC^{\infty}(X, L)$ is an eigensection 
of $D^{2}$
with $D^2 s = \lambda  s$ and $\lambda \neq 0$, then
$D s \neq  0$ and
\begin{equation}
D^2(Ds) = \lambda D s.
\end{equation}
Now $Ds\in \Omega^{0,1}(X, L)$, so by \eqref{1.11a}
we have $\lambda\geqslant 2 a_{L}-C$.
\end{proof}
For a sequence $(L_p, h_p)$, $p\geqslant 1$, let us denote 
\begin{equation}\label{1.8a}
a_p\coloneq a_{L_p},
\end{equation}
with $a_{L_p}$ in (\ref{1.8}). We have thus
\begin{equation}\label{1.11}
\big\|D_ps\big\|^2_{L^2} \geqslant (2 a_{p}-C)\big\|s\big\|_{L^{2}}^{2},
\:\:s\in \Omega^{>0}(X, L_{p})
=\bigoplus^{n}_{j=1} \Omega^{0,j}(X, L_p).
\end{equation}
Note that under hypothesis \eqref{0.6} we have 
$\lim_{p\rightarrow \infty}a_{p}=\infty$.
As a consequence of Theorem \ref{t1.2} we obtain a Kodaira-Serre 
vanishing theorem for the sequence $L_p$.
\begin{cor}
Let $(X, \vartheta)$ be a compact K\"ahler manifold and 
let $(L_p, h_p)$, $p\geqslant 1$, be a sequence of holomorphic 
Hermitian line bundles on $X$ such that
$\lim_{p\to\infty}a_p=0$.
Then for $p$ large enough the Dolbeault cohomology groups of $L_p$
satisfy
\begin{equation}\label{1.5}
 H^{0,j}(X,L_{p})=0,\ \ {\rm for}\ j\neq 0.
\end{equation}
Hence the kernel of $D^{2}_{p}$
is concentrated in degree $0$ for $p$ large enough, i.e.,
\begin{equation}\label{e:kdp}
\Ker (D^{2}_{p}) = H^{0}(X,L_{p}),\ p\gg1.
\end{equation}
\end{cor}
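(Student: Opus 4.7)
The plan is to deduce both assertions directly from Theorem \ref{t1.2} together with the Hodge isomorphism. Assuming the hypothesis is $\lim_{p\to\infty}a_p=+\infty$ (consistent with \eqref{0.6} and the remark immediately preceding the corollary), the spectral lower bound \eqref{1.11} gives $2a_p-C>0$ for all $p$ sufficiently large, hence
\[
\Ker(D_p^2)\cap\Omega^{>0}(X,L_p)=0\quad\text{for }p\gg1.
\]
This is the heart of the argument; everything else is bookkeeping.

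Combined with the identity $D_p^2=2\Box^{L_p}$ from \eqref{1.3} and the fact that $\Box^{L_p}$ preserves the $\Z$-grading on $\Omega^{0,\bullet}(X,L_p)$, the previous vanishing shows that the space of harmonic $(0,j)$-forms with values in $L_p$ is trivial for every $j\geqslant 1$ and every sufficiently large $p$. The classical Hodge decomposition on a compact complex manifold provides an isomorphism $H^{0,j}(X,L_p)\cong\Ker\Box^{L_p}\big|_{\Omega^{0,j}(X,L_p)}$, so the vanishing \eqref{1.5} follows at once.

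For \eqref{e:kdp}, I would observe that $\Ker(D_p^2)=\Ker(\Box^{L_p})$ splits along the $\Z$-grading, and by the previous step only the degree-zero component can be nonzero for $p$ large. In degree zero one has $\Box^{L_p}s=\ov\partial^{L_p,\ast}\ov\partial^{L_p}s$, so $\Box^{L_p}s=0$ is equivalent to $\|\ov\partial^{L_p}s\|^2_{L^2}=0$, i.e., to $s\in H^0(X,L_p)$. This gives \eqref{e:kdp}.

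There is no genuine obstacle: the spectral gap from Theorem \ref{t1.2} does all the work, and the only point requiring a little care is the standard invocation of Hodge theory in the Dolbeault setting, which applies because $X$ is compact and $\Box^{L_p}$ is elliptic and essentially self-adjoint.
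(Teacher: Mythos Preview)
Your proof is correct and follows essentially the same route as the paper: invoke the spectral gap \eqref{1.11a} (equivalently \eqref{1.11}) to kill harmonic forms in positive degree for $p\gg1$, then apply the Hodge isomorphism $\Ker D_p^2\big|_{\Omega^{0,j}}\simeq H^{0,j}(X,L_p)$ to obtain \eqref{1.5} and \eqref{e:kdp}. You are also right that the hypothesis should read $\lim_{p\to\infty}a_p=+\infty$; the stated condition $\lim a_p=0$ is a typo and would not yield the conclusion.
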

\begin{proof}
By Hodge theory we know that
\begin{equation}\label{1.4}
\Ker D^{2}_{p}\,\big|_{\Omega^{0, j}(X,L_{p})}\simeq H^{0,j}(X,L_{p}),
\end{equation}
where $H^{0, \bullet}(X, L_{p})$ denotes the Dolbeault cohomology 
groups. Thus \eqref{1.5} follows from \eqref{1.11a}. 
Moreover, \eqref{1.4} and \eqref{1.5} yield \eqref{e:kdp}.
\end{proof}

We also need a generalization for non-compact manifolds 
of Theorem \ref{t1.2}. 
Let $(X,\vartheta)$ 
be a Hermitian manifold and let $(L,h)$ be a Hermitian 
holomorphic line bundle on $X$. 
If $(X,\vartheta)$ is complete, then the square $D^2$
of the Dirac operator on $L$ is essentially self-adjoint and we denote
by $\Dom(D^2)$ the domain of its self-adjoint extension.
\begin{thm}\label{t1.2a}
Let $(X, \vartheta)$ be a complete K\"ahler manifold such that
its Ricci curvature $c_1(K_X^*,h^{K_X^*})$ is bounded from above.
Then there exists $C>0$ such that for all Hermitian holomorphic line bundles 
$(L,h)$ on $X$ with $a_L>-\infty$ we have
\begin{equation}\label{1.11b}
\big\|Ds\big\|^2_{L^2} \geqslant (2 a_{L}-C)\big\|s\big\|_{L^{2}}^{2},
\:\:s\in\Dom(D^2)\cap
\bigoplus^{n}_{j=1}\mL^2_{0,j}(X, L).
\end{equation}
Moreover, ${\rm Spec}(D^2) \subset \{0\} \cup [2 a_{L}-C, \infty)$.
\end{thm}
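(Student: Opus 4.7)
The plan is to follow the proof of Theorem \ref{t1.2} closely, replacing the compactness of $X$ by two ingredients that must be supplied by hand in the non-compact setting: (i) density of compactly supported smooth forms in the $L^2$-domain of $D$, which will follow from the completeness of the Kähler metric, and (ii) a uniform bound on the $R^{K_X^*}$ contribution appearing in \eqref{1.14b}, which now comes from the hypothesis that $c_1(K_X^*, h^{K_X^*})$ is bounded. First I would invoke the Andreotti--Vesentini density lemma for complete Kähler manifolds (see e.g.\ \cite[Lemma 3.3.1]{MM07}): under completeness of $(X,\vartheta)$, the operators $\db^L$ and $\db^{L,*}$ admit unique closed $L^2$-extensions with $\db^{L,*}$ equal to the Hilbert-space adjoint of $\db^L$, the Dirac operator $D=\sqrt{2}(\db^L+\db^{L,*})$ is essentially self-adjoint, and the space $\Omega_c^{0,\bullet}(X,L)$ of compactly supported smooth forms is dense in $\Dom(\db^L)\cap\Dom(\db^{L,*})=\Dom(D)$ with respect to the graph norm $\|s\|^2+\|Ds\|^2$. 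This is the analytic substitute for compactness and is the only step at which completeness is essential.

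Next, for any $s\in\Omega_c^{>0}(X,L)$ the pointwise Bochner--Kodaira--Nakano identity \eqref{herm24b} integrates without any compactness assumption to yield \eqref{1.14}. The hypothesis on $c_1(K_X^*, h^{K_X^*})$ produces a constant $C>0$, independent of the line bundle $(L,h)$, such that
\begin{equation*}
\bigl\langle R^{K_X^*}(w_j,\overline{w}_k)\overline{w}^k\wedge i_{\overline{w}_j}s,s\bigr\rangle \geqslant -C\|s\|^2, \qquad s\in\Omega_c^{>0}(X,L),
\end{equation*}
which is exactly the non-compact replacement of \eqref{1.14b}. Combining this with \eqref{1.16}, which depends only on the pointwise definition \eqref{1.8} of $a_L$, and with \eqref{1.3}, yields \eqref{1.11b} on $\Omega_c^{>0}(X,L)$. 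The density statement from the previous step then extends the inequality by continuity in the graph norm to all $s\in\Dom(D)\cap\bigoplus_{j=1}^n\mL^2_{0,j}(X,L)$, which contains $\Dom(D^2)$ restricted to positive degrees.

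Finally, for the spectral statement, observe that $D^2=2\Box^{L}$ preserves the $\Z$-grading $\mL^2_{0,\bullet}(X,L)=\bigoplus_{j=0}^n\mL^2_{0,j}(X,L)$. On the positive-degree part the estimate just established forces $\spec(D^2)\subset[2a_L-C,\infty)$ by the quadratic form characterisation. On the summand $\mL^2_{0,0}(X,L)$, we have $D^2=2\db^{L,*}\db^L$, and $\db^L$ intertwines $D^2$ in degree zero with $D^2$ in degree one while being injective on the orthogonal complement of $\Ker(D^2|_{\mL^2_{0,0}})$; hence the non-zero approximate eigenvalues of $D^2|_{\mL^2_{0,0}}$ are also approximate eigenvalues of $D^2|_{\mL^2_{0,1}}$, so the non-zero spectrum of $D^2|_{\mL^2_{0,0}}$ is contained in $[2a_L-C,\infty)$. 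Combining both cases gives $\spec(D^2)\subset\{0\}\cup[2a_L-C,\infty)$. The main obstacle is the density step: without completeness one cannot realise $\Dom(D^2)$ as a graph-norm closure of $\Omega_c^{0,\bullet}(X,L)$, and the argument would break down even if all pointwise curvature hypotheses were intact; the essential self-adjointness and core property granted by completeness are what allow the pointwise Bochner--Kodaira--Nakano estimate to become a quadratic form inequality on the full self-adjoint domain.
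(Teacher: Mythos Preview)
Your proposal is correct and follows the same route as the paper, whose proof consists only of a reference to \cite[Theorem~6.1.1]{MM07}. Your argument---density of $\Omega_c^{0,\bullet}(X,L)$ in the graph norm via completeness, the Bochner--Kodaira--Nakano inequality with the uniform Ricci bound replacing compactness in \eqref{1.14b}, and the intertwining/spectral argument to pass from positive degree to degree zero---is precisely a careful unpacking of that citation.
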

\begin{proof}
The proof follows from the proof of \cite[Theorem 6.1.1]{MM07}.
\end{proof}
\subsection{Localization of the problem}

Let $a^{X}$ be the injectivity radius of $(X, g^{TX})$, and 
$\var_{0}\in (0, a^{X}/4)$.
We denote by $B^{X}(x, \var_{0})$ and $B^{T_{x}X}(0, \var_{0})$ 
the open ball in $X$ and $T_{x}X$ with the
center $x$ and radius $\var_{0}$, respectively.
Then we identify $B^{T_{x}X}(0, \var_{0})$ with $B^{X}(x, \var_{0})$
by the exponential map $Z \mapsto {\rm exp}^{X}_{x}(Z)$ 
for $Z\in T_{x}X$.
Let $f: \R \rightarrow  [0, 1]$ be a smooth even function 
such that $f(v)=1$ for $|v|\leqslant \var_{0}/2$ and
$f(v)=0$ for $|v|\geqslant \var_{0}$. Set
\begin{equation}\label{2.1}
F(a) =\Big( \int^{\infty}_{-\infty}f(v)dv\Big)^{-1}
\int^{\infty}_{-\infty} e^{iva}f(v)dv.
\end{equation}
Then $F(a)$ lies in the Schwartz space $\mathcal{S}(\R)$ and $F(0) = 1$.

\begin{prop}\label{t2.1}
For any $l,m \in \N$, $\var_{0}> 0$, there exists $C_{l,m,\var_{0}}>0$ 
such that for $p\geqslant 1$ and $x, x'\in X$,
\begin{align}\begin{split}\label{2.2}
& \big|F(D_{p})(x, x')-P_{p}(x, x')\big|_{\cC^{m}(X\times X)}
\leqslant C_{l,m,\var_{0}}A_{p}^{-l},
\\&
\big|P_{p}(x, x')\big|_{\cC^{m}(X\times X)} 
\leqslant  C_{l, m, \var_{0}}A_{p}^{-l},\ \text{if}\ d(x, x')
\geqslant \var_{0}.
\end{split}\end{align}
Here the $\cC^{m}$ norm is induced by $\nabla^{L_p}$ 
and $\nabla^{TX}$.
\end{prop}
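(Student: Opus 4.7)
The plan is to follow the standard localization paradigm of \cite[\S4.1]{MM07}, combining the spectral gap for $D_p^2$ from Theorem~\ref{t1.2} with the finite propagation speed of the wave operator $e^{ivD_p}$. Under \eqref{0.6}, the smallest eigenvalue of $c_1(L_p,h_p)$ with respect to $g^{TX}$ is $A_p\alpha_0+O(A_p^{1-a})$, where $\alpha_0>0$ denotes the smallest eigenvalue of $\omega$, which is a positive constant since $X$ is compact. Consequently $a_p\geqslant cA_p$ for some $c>0$ and $p$ large, so Theorem~\ref{t1.2} upgrades to
\[\operatorname{Spec}(D_p^2)\subset\{0\}\cup[cA_p,\infty),\]
while the preceding corollary ensures $\Ker D_p^2=H^0(X,L_p)$ for $p$ large, so that $P_p$ is precisely the spectral projector onto $\Ker D_p$.

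Next, since $f$ is even and supported in $[-\var_0,\var_0]$, the function $F$ is, up to a normalizing factor, the Fourier transform of $f$, whence $F(D_p)$ may be written as an integral of wave operators $\int f(v)e^{ivD_p}\,dv$. Finite propagation speed (with speed $\leqslant|v|$) implies that the Schwartz kernel of $F(D_p)$ is supported in $\{d(x,x')<\var_0\}$. In particular, both inequalities in \eqref{2.2} reduce to controlling $F(D_p)-P_p$ in the desired topology: the first by definition, and the second because in the region $d(x,x')\geqslant\var_0$ one has $P_p(x,x')=-(F(D_p)-P_p)(x,x')$.

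To estimate $F(D_p)-P_p=F(D_p)(I-P_p)$ (which holds because $F(0)=1$), I would combine the spectral gap with the Schwartz property $F\in\mathcal{S}(\R)$: for any $l\in\N$,
\[\|F(D_p)(I-P_p)\|_{L^2\to L^2}\leqslant\sup_{|\lambda|\geqslant\sqrt{cA_p}}|F(\lambda)|=O(A_p^{-l}).\]
The same reasoning applied to $\lambda^{k+k'}F(\lambda)\in\mathcal{S}(\R)$ yields $\|D_p^k(F(D_p)-P_p)D_p^{k'}\|_{L^2\to L^2}=O(A_p^{-l})$ for any $k,k',l$. Elliptic regularity for $D_p$ together with Sobolev embedding then converts these operator bounds into pointwise $\cC^m$ bounds on the Schwartz kernel, following \cite[Theorem~4.2.1]{MM07}.

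The hard part will be the book-keeping in this last step: the connection $\nabla^{L_p}$ has curvature of order $A_p$, so commuting $D_p$ past fixed coordinate vector fields or past $\nabla^{TX}$ produces error terms that grow polynomially in $A_p$, and the constants in the Sobolev embedding adapted to $\nabla^{L_p}$ depend on $A_p$ as well. These losses are harmless because $F\in\mathcal{S}(\R)$ permits choosing $l$ arbitrarily large in the decay estimate, thereby absorbing any fixed polynomial growth in $A_p$ coming from the commutators and the embeddings.
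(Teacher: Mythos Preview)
Your proposal is correct and follows essentially the same route as the paper: spectral gap plus Schwartz decay of $F$ to control $F(D_p)-P_p=\phi_p(D_p)$ in operator norm (with arbitrary powers of $D_p$ on either side), then Sobolev embedding, with the polynomial losses in $A_p$ absorbed by taking $l$ large. The paper makes your ``book-keeping'' step explicit: using the radial integration formula $\Gamma^{L_p}_Z=\int_0^1(i_{\mathcal R}R^{L_p})_{tZ}\,dt$ it derives the quantitative Sobolev inequality $\|s\|_{H^{m+1}_p}\leqslant C'_mA_p^{m+1}\sum_{j=0}^{m+1}A_p^{-j}\|D_p^js\|_{L^2}$, which is exactly the polynomial-in-$A_p$ loss you anticipated; your finite-propagation-speed argument for the second inequality is likewise the one used.
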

\begin{proof} We adapt here the proof of \cite[Proposition 4.1]{DLM04},
\cite[Proposition 4.1.5]{MM07}.
For $a\in \R$, set
\begin{equation}\label{2.3}
\phi_{p}(a)=\mathds{1}_{[\sqrt{a_{p}}, \infty)}(|a|)F(a).
 \end{equation}
For $a_{p}> C$ we have by Theorem \ref{t1.2},
\begin{equation} \label{2.4}
F(D_{p})-P_{p}=\phi_{p}(D_{p}).
\end{equation}
By (\ref{2.1}), for any $m\in \N$ there exists $C_{m}>0$ such that
\begin{equation}
\sup_{a\in \R}|a|^{m}|F(a)| \leqslant C_{m}.
 \end{equation}
Since $X$ is compact there exist $\{x_i\}^{r}_{i=1}$ such that
$\{U_{i}=B^{X}(x_{i}, \var_{0})\}^r_{i=1}$ is a covering of $X$. 
We identify $B^{T_{x_{i}}X}(0, \var_{0})$ with $B^{X}(x_i, \var_{0})$ 
by the exponential map as above.
For $Z\in B^{T_{x_i}X}(0, \var_{0})$ we identify 
\[(L_p)_{Z}\cong (L_p)_{x_{i}}\,,\quad
\Lambda(T^{\ast(0, 1)}X)_{Z}\cong\Lambda(T^{\ast(0, 1)}_{x_{i}}X), \]
by parallel transport along 
the curve $[0, 1]\ni u \mapsto uZ$
with respect to the connection
$\nabla^{L_p}$ and $\nabla^{\Lambda(T^{\ast(0, 1)}X)}$, respectively.

Let $\{e_{j}\}^{2n}_{j=1}$ be an orthonormal basis
of $T_{x_{i}}X$. Let $\tilde{e}_{j}(Z)$ be the parallel transport 
of $e_{j}$
with respect to $\nabla^{TX}$ along the above curve.
Let $\Gamma^{L_{p}}$ , $\Gamma^{\Lambda(T^{\ast(0, 1)}X)}$
be the corresponding
connection forms of $\nabla^{L_{p}}$ and 
$\nabla^{\Lambda(T^{\ast(0, 1)}X)}$
with respect to any fixed frame for $L_{p}$ and 
$\Lambda(T^{\ast(0, 1)}X)$
which is parallel along the above curve under the trivialization on $U_{i}$.

Denote by $\nabla_{U}$ the ordinary differentiation operator 
on $T_{x_{i}}X$
in the direction $U$. By (\ref{1.7}),
\begin{equation}\label{2.6}
D_p =\sum^{2n}_{j=1}c(\tilde{e}_{j})\big(\nabla_{\tilde{e}_{j}}
+\Gamma^{L_{p}}(\tilde{e}_{j}) 
+ \Gamma^{\Lambda(T^{\ast(0, 1)}X)}(\tilde{e}_{j})\big).
\end{equation}
Let $\{\rho_i\}$ be a partition of unity subordinate to $\{U_{i}\}$.
For $\ell\in \N$, we define a Sobolev norm on 
the $\ell$-th Sobolev space $H^{\ell}(X,L_{p})$ by
\begin{equation}\label{2.7}
\big\|s\big\|_{H^\ell_{p}}
=\sum^{r}_{i=1}\,\sum^{\ell}_{k=0}\,\sum^{2n}_{i_{1},\ldots,i_{k}=1}
\big\|\nabla_{e_{i_{1}}}\ldots \nabla_{e_{i_{k}}}
(\rho_{i}s)\big\|_{L^{2}}.
\end{equation}
Denote by $\mathcal{R}=\sum_{j}Z_{j}e_{j}$ the radial vector field.
By \cite[(1.2.32)]{MM07}, 
$L_{\mathcal{R}}\Gamma^{L_{p}}=i_{\mathcal{R}}R^{L_{p}}$.
Set
\begin{equation}\label{2.7a}
\Gamma^{L_{p}}=\sum^{2n}_{j=1}a_{j}(Z)dZ_{j},\ \ 
a_{j}\in \cC^{\infty}(U_{i}).
\end{equation}
Then
\begin{equation}\label{2.7b}
(L_{\mathcal{R}}\Gamma^{L_{p}})_{Z}=
\sum^{2n}_{j, k=1}\Big(Z_{k}
\frac{\partial a_{j}}{\partial Z_{k}}(Z)\Big)dZ_{j}
+\sum^{2n}_{j=1}a_{j}(Z)dZ_{j}.
\end{equation}
Evaluating at the point $tZ$ yields
\begin{equation}\label{2.7c}
\frac{\partial}{\partial t}\big(ta_{j}(tZ)\big)dZ_{j}
=(L_{\mathcal{R}}\Gamma^{L_{p}})_{tZ}
=(i_{\mathcal{R}}R^{L_{p}})_{tZ}.
\end{equation}
From \eqref{2.7c} we obtain immediately $\Gamma_{0}^{L_{P}}=0$ and 
(cf. also \cite[(2-16)]{Dinh17})
\begin{equation}\label{2.8}
\Gamma^{L_{p}}_{Z}=
\int^{1}_{0}(L_{\mathcal{R}}\Gamma^{L_{p}})_{tZ}\,dt
=\int^{1}_{0}(i_{\mathcal{R}}R^{L_{p}})_{tZ}\,dt,
\end{equation}
which allows us to estimate the term $\Gamma^{L_{p}}$ in (\ref{2.6}).
From (\ref{2.6}), (\ref{2.7}) and (\ref{2.8}),
\begin{equation}\label{2.10}
\|s\|_{H^{1}_{p}}\leqslant
C\big(\|D_p s\big\|_{L^2}+A_{p}\|s\|_{L^2}\big).
\end{equation}

\noindent
Let $Q$ be a differential operator of order $m$ with scalar principal 
symbol and with compact
support in $U_{i}$, then
\begin{equation}\label{2.11}
\big[D_{p},Q\big]=\sum^{2n}_{j=1}\big[c(\tilde{e}_{j})
\Gamma^{L_{p}}(\tilde{e}_{j}), Q\big]
+\sum^{2n}_{j=1}\big[c(\tilde{e}_{j})(\nabla_{\tilde{e}_{j}}
+\Gamma^{\Lambda(T^{\ast(0, 1)}X)}(\tilde{e}_{j})),Q\big],
\end{equation}
where the sums are differential operators of orders $m-1$ and $m$,
respectively. 
By (\ref{2.10}) and (\ref{2.11}),
\begin{eqnarray}\label{2.12}
\big\|Qs\big\|_{H^{1}_{p}} &\leqslant& C\big(\|D_{p}Qs\|_{L^{2}}
+A_{p}\|Qs\|_{L^{2}}\big)
\\ &\leqslant &
C\big(\|QD_{p}s\|_{L^{2}}+A_{p}\|s\|_{H^{m}_{p}} \big).
\nonumber
\end{eqnarray}
Due to \eqref{2.12}, for every $m\in \N$ there exists $C'_m >0$ 
such that for $p \geqslant 1$,
\begin{equation}
\big\|s\big\|_{H^{m+1}_{p}} \leqslant
C'_{m}\big(\|D_{p}s\|_{H^{m}_{p}}+A_{p}\|s\|_{H^{m}_{p}}\big).
\end{equation}
This means that
\begin{equation}\label{2.14}
\|s\|_{H^{m+1}_{p}} \leqslant C'_{m} A^{m+1}_{p}
\sum^{m+1}_{j=0} A_{p}^{-j}\|D^{j}_{p}s\|_{L^2}\,.
\end{equation}
Using the Sobolev estimate \eqref{2.14} we can now
repeat the proof of \cite[Proposition 4.1]{DLM04}
and conclude the proof of Proposition \ref{t2.1}.
\end{proof}

From Proposition \ref{t2.1} and the finite propagation speed of 
solutions of hyperbolic equations
\cite[Theorem D.2.1, (D.2.17)]{MM07}, $F(D_{p})(x, x')$ only depends on
the restriction of $D_{p}$ to $B^{X}(x, \var_{0})$ and the
asymptotics $P_{p}(x, x')$ as $p\rightarrow \infty$ are localized on a
neighborhood of $x$.

\section{Asymptotic expansion of Bergman kernel}\label{s2}
In this Section we establish the asymptotic expansion of the Bergman 
kernel near
the diagonal and then prove Theorem \ref{t1}.

\subsection{Rescaling}
To get uniform estimates of the Bergman kernel in terms of $p$,
we adapt the approach of \cite[\S 2]{Dinh17} by using holomorphic 
coordinates instead of normal coordinates as in \cite[\S 4.2]{DLM04}
and \cite[\S 4.1.3]{MM07}.

Let $\psi: X\supset U\rightarrow V\subset \C^{n}$ be 
a holomorphic local chart
such that $0\in V$ and $V$ is convex (by abuse of notation, 
we sometimes identify
$U$ with $V$ and $x$ with $\psi(x)$).
Then for $x\in \frac{1}{2}V:=\{y\in \C^{n}, 2y\in V\}$, 
we will use the holomorphic coordinates
induced by $\psi$ and let $0<\var_{0}<1$ be such that 
$B(x, 4\var_{0})\subset V$ for any $x\in \frac{1}{2}V$.
We choose $\var_{0}\leqslant a^{X}/4$ in order to use the 
estimates given in the proof
of Proposition \ref{t2.1}. 

For $x\in \frac{1}{2}V$ consider the holomorphic family of
holomorphic local coordinates
\[\psi_{x}: \psi^{-1}(B(x, 4\var_{0}))\rightarrow B(0, 4\var_{0}),
\quad \psi_{x}(y):=\psi(y)-x\,.\]
The $L^{2}$-norm on $B(x, 4\var_{0})$ is given by
\begin{equation}
\|s\|^{2}_{L^{2}, x}=\int_{B(x, 4\var_{0})}|s(y)|^{2}dv_{X}(y).
\end{equation}
Let $S_{p}$ be a unitary section of $(L_{p}, h_{p})$ which is parallel
with respect to $\nabla^{L_{p}}$ along
the curve $[0, 1]\ni u\mapsto uZ$ for $|Z|\leqslant 4\var_{0}$.

\begin{lemma}\label{L:rep}
There exists a holomorphic frame $\sigma_{p}:=e^{f_{p}}S_{p}$ 
of $L_{p}$ on $B(x, 4\var_{0})$
such that
\begin{equation}\label{2.18b}
\|f_{p}\|_{\cC^{k}(B(x, 2\var_{0}))}\leqslant C_{k}\|R^{L_{p}}\|_{k+n+1},
\end{equation}
for some constant $C_{k}$ independent of $x\in \frac{1}{2}V$ and $p$.
\end{lemma}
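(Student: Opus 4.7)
The plan is to produce $f_p$ by solving a $\bar\partial$-equation on the convex ball $B(x,4\varepsilon_0) \subset V$, with quantitative estimates that track the dependence on the curvature $R^{L_p}$. Writing $\sigma_p = e^{f_p} S_p$ and using that in the unitary, radially-parallel frame $S_p$ one has $\bar\partial S_p = (\Gamma^{L_p})^{0,1} S_p$, the holomorphy of $\sigma_p$ is equivalent to
\[
\bar\partial f_p = -(\Gamma^{L_p})^{0,1}.
\]
The right-hand side is $\bar\partial$-closed, because $\bar\partial^2 = 0$ on sections of the holomorphic line bundle $L_p$, and the ball is convex, so the Dolbeault Poincar\'e lemma guarantees solvability in principle; the task is to obtain quantitative $\cC^k$-bounds uniform in $x\in\tfrac12 V$ and in $p$.

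First I would express $(\Gamma^{L_p})^{0,1}$ explicitly in terms of $R^{L_p}$. Writing $R^{L_p} = \sum R_{j\bar k}\,dz_j\wedge d\bar z_k$ in the coordinates $\psi$ and taking the $(0,1)$-part of the integral \eqref{2.8}, namely $\Gamma^{L_p}_Z = \int_0^1 (i_{\mathcal R} R^{L_p})_{tZ}\,dt$, one obtains
\[
(\Gamma^{L_p})^{0,1}_Z = \sum_k \Big(\int_0^1 t\sum_j z_j R_{j\bar k}(tZ)\,dt\Big)d\bar z_k.
\]
Differentiating under the integral sign gives immediately, for every $m\geqslant 0$,
\[
\bigl\|(\Gamma^{L_p})^{0,1}\bigr\|_{\cC^m(B(x,3\varepsilon_0))} \leqslant C_m \|R^{L_p}\|_{\cC^m},
\]
with $C_m$ independent of $x$ and $p$.

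Next I would solve $\bar\partial f_p = -(\Gamma^{L_p})^{0,1}$ on $B(x,3\varepsilon_0)$ by H\"ormander's $L^2$-method on the (strictly pseudoconvex) ball, obtaining a solution with $\|f_p\|_{L^2(B(x,3\varepsilon_0))} \leqslant C\|(\Gamma^{L_p})^{0,1}\|_{L^2}$. Interior elliptic regularity for the Kodaira Laplacian $\Box = \bar\partial\bar\partial^* + \bar\partial^*\bar\partial$, applied to the canonical solution, then bootstraps this into $H^s$-estimates for every $s$; combining with the Sobolev embedding $H^s \hookrightarrow \cC^k$ valid on $\R^{2n}$ for $s \geqslant k+n+1$ yields
\[
\|f_p\|_{\cC^k(B(x,2\varepsilon_0))} \leqslant C_k \bigl\|(\Gamma^{L_p})^{0,1}\bigr\|_{\cC^{k+n+1}} \leqslant C_k \|R^{L_p}\|_{\cC^{k+n+1}}.
\]
The main technical point is ensuring that all constants are independent of $x$ and $p$; this follows because in the chart $\psi$ each ball $B(x,4\varepsilon_0)$ is a mere translate of a single Euclidean ball of fixed radius, so the H\"ormander existence constant and the interior elliptic regularity constants do not vary with $x$, while $p$-uniformity is automatic since the estimates depend on $L_p$ only through $R^{L_p}$. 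The precise exponent shift $n+1$ is exactly what the Sobolev embedding in real dimension $2n$ costs; any sharper integral-kernel approach (Koppelman, Henkin) would give a smaller shift, but the stated bound is more than sufficient for later applications.
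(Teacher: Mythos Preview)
Your proposal is correct and follows essentially the same route as the paper: set up $\bar\partial f_p=-(\Gamma^{L_p})^{0,1}$, solve it with an $L^2$ estimate on the ball (the paper cites Demailly, you invoke H\"ormander), then upgrade via interior elliptic regularity and Sobolev embedding, using \eqref{2.8} to bound $\Gamma^{L_p}$ by $R^{L_p}$. The paper's argument is terser but the logical skeleton and the source of the $k+n+1$ shift are identical.
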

\begin{proof}

Denote by $\Gamma^{L_{p}}$ the connection
form of $\nabla^{L_{p}}$ with respect to the frame $S_{p}$ of $L_{p}$
and by $(\Gamma^{L_{p}})^{0, 1}$ the $(0, 1)$-part of
$\Gamma^{L_{p}}$.
As $\ov{\partial}(\Gamma^{L_{p}})^{0, 1}=0$,
by \cite[Chapter VIII, Theorem 6.1 and (6.4)]{Demailly},
there exists
 $f_{p}\in \cC^{\infty}(B(x, 4\var_{0}))$ satisfying
\begin{equation}\label{2.17}
\ov{\partial}f_{p}=-(\Gamma^{L_{p}})^{0, 1},
\end{equation}
and
\begin{equation}\label{2.18}
\|f_{p}\|_{L^{2}, x}\leqslant c_{1}\|\Gamma^{L_{p}}\|_{L^{2}, x},
\end{equation}
where $c_{1}$ is a constant independent of $x\in \frac{1}{2}V$ and $p$.
Using elliptic estimate, we have
\begin{equation}\label{2.18a}
\|f_{p}\|_{k+1, x}\leqslant c_{2, k}\big(\|\ov{\partial}f_{p} \|_{k, x}
+\|f_{p}\|_{L^{2}, x}\big),
\end{equation}
where $\|\cdot\|_{k, x}$ denotes the Sobolev norm on the Sobolev space 
${\bf H}^{k}(B(x, 4\var_{0}))$
and $c_{2, k}$ is a constant independent of $x\in \frac{1}{2}V$ and $p$.
Denote by $\varphi_{p}$ be the real part of $f_{p}$. From (\ref{2.17})
we know that $\sigma_{p}:=e^{f_{p}}S_{p}$ forms a holomorphic
frame of $L_{p}$ on $B(x, 4\var_{0})$ with 
\begin{equation}\label{ma2.5a}
|\sigma_{p}|_{h_{p}}^{2}(Z)=e^{2\varphi_{p}(Z)}.
\end{equation}
The estimate (\ref{2.18b}) follows from 
(\ref{2.8}), (\ref{2.17}), (\ref{2.18}), (\ref{2.18a})
and Sobolev embedding theorem.
\end{proof}

\begin{rem}
Note that on a Stein manifold $M$ we have 
$H^1(M,\cO^*)\cong H^2(M,\Z)$
due to Cartan's theorem B (see e.\,g.\ \cite[p.\,201]{Ho1}), thus
any holomorphic line bundle $L$ over a Stein contractible
manifold, for example a coordinate ball, is holomorphically trivial
(this is due to Oka \cite{Oka}). Lemma \ref{L:rep} gives
a a proof with estimates of this result over a coordinate ball.

\end{rem}

Consider the holomorphic family of holomorphic trivializations
of $L_{p}$ associated with the coordinate $\psi_{x}$ and the frame 
$\sigma_{p}$.
These trivializations are given by
\[\Psi_{p, x}: L_{p}|_{\psi^{-1}(B(x, 4\var_{0}))}
\rightarrow B(0, 4\var_{0})\times \C\]
with $\Psi_{p, x}(y, v_{p}):=(\psi_{x}(y), v_{p}/\sigma_{p}(y))$
for $v_{p}$
a vector in the fiber of $L_{p}$ over the point $y$.

Consider a point $x_{0}\in \frac{1}{2}V$.
Denote by $\varphi_{p, x_{0}}=\varphi_{p}\circ \psi^{-1}_{x_{0}}$
the function $\varphi_{p}$ in (\ref{ma2.5a})
in local coordinate $\psi_{x_{0}}$.
Denote by $\varphi^{[1]}_{p, x_{0}}$ and $\varphi^{[2]}_{p, x_{0}}$
the first
and second order Taylor expansion of $\varphi_{p, x_{0}}$, i.e.,
\begin{eqnarray}
 \varphi^{[1]}_{p, x_{0}}(Z)&:=&
\sum^{n}_{j=1}\Big(\frac{\partial \varphi_{p}}{\partial z_{j}}(x_{0})z_{j}
+\frac{\partial \varphi_{p}}{\partial \ov{z}_{j}}(x_{0})\ov{z}_{j}\Big),
\\
 \varphi^{[2]}_{p, x_{0}}(Z)&:=&
{\rm Re}\sum^{n}_{j, k=1}
\Big(\frac{\partial^{2}\varphi_{p}}{\partial z_{j}\partial z_{k}}(x_{0})
z_{j}z_{k}
+\frac{\partial^{2}\varphi_{p}}{\partial z_{j}\partial \ov{z}_{k}}(x_{0})
z_{j}\ov{z}_{k}\Big),
\nonumber
\end{eqnarray}
where we write $z=(z_{1}, \ldots, z_{n})$ the complex coordinate of $Z$.

Let $\rho: \R \rightarrow [0, 1]$ be a smooth even function such that
\begin{equation}
\rho(t) =1\ \ {\rm if}\ \ |t| < 2;\ \rho(t)=0\ \ {\rm if}\ \ |t| > 4.
\end{equation}
We denote in the sequel $X_{0}=\R^{2n}\simeq T_{x_{0}}X$ 
and equip $X_{0}$ with the metric
$g^{TX_{0}}(Z):= g^{TX}(\rho(|Z|/\var_{0})Z)$. 
Now let $0<\var<\var_{0}$ to be determined and define
\begin{equation}\label{2.20}
\phi_{\var, p}(Z):=\rho(|Z|/\var)\varphi_{p, x_{0}}(Z)
+\big(1-\rho(|Z|/\var)\big)\big(\varphi_{p}(x_{0})
+\varphi^{[1]}_{p, x_{0}}(Z)+\varphi^{[2]}_{p, x_{0}}(Z)\big).
\end{equation}
Let $h_{\var}^{L_{p, 0}}$ be the metric on $L_{p, 0}=X_{0}\times \C$ 
defined by
\begin{equation}
|1|^{2}_{h^{L_{p, 0}}_{\var}}(Z):=e^{-2\phi_{\var, p}(Z)}.
\end{equation}
Let $\nabla_{\var}^{L_{p, 0}}$ be the Chern connection on
$(L_{p, 0}, h^{L_{p, 0}}_{\var})$ and
$R_{\var}^{L_{p, 0}}$ be the curvature of $\nabla_{\var}^{L_{p, 0}}$.
By (\ref{0.8}) and (\ref{2.18b}), there exists $C>0$ independent of 
$p$ such that
for $|Z|\leqslant 4\var$, $0\leqslant j\leqslant 2$, we have
\begin{equation}\label{2.21a}
\Big|f_{p, x_{0}}(Z)-\big(f_{p}(x_{0})+f^{[1]}_{p, x_{0}}(Z)
+f^{[2]}_{p, x_{0}}(Z)\big)\Big|_{\cC^{j}}
\leqslant C A_{p}|Z|^{3-j}.
\end{equation}
From (\ref{0.8}) and (\ref{1.8a}), we may assume that
$a_{p}/A_{p}\geqslant \mu_{0}$ holds for all $p\in \N^{\ast}$,
here $\mu_{0}$ is a constant depending only on $\omega$.
By (\ref{2.20})--(\ref{2.21a}), there exists $0<\var<\var_{0}$ 
small enough
such that the following estimate holds for every $x_{0}\in U$:
\begin{equation}\label{2.21b}
\inf\Big\{\sqrt{-1}R_{\var, Z}^{L_{p, 0}}(u, Ju)\big/|u|^{2}_{g^{TX_{0}}}:
u\in T_{Z}X_{0}\setminus\{0\}\ {\rm and}\ Z\in X_{0} \Big\}
\geqslant \frac{4}{5}a_{p}.
\end{equation}
In the sequel we fix $\var>0$ small such that (\ref{2.21b}) holds.
Let
\begin{equation}\label{2.13a}
D^{X_{0}}_{p}=\sqrt{2}\Big(\ov{\partial}^{L_{p, 0}}
+(\ov{\partial}^{L_{p, 0}})^{\ast}\Big)
\end{equation}
be the Dirac-Dolbeault operator on $X_{0}$ associated to the above data,
where $(\ov{\partial}^{L_{p, 0}})^{\ast}$ is the adjoint of 
$\ov{\partial}^{L_{p, 0}}$
with respect to the metrics $g^{TX_{0}}$ and $h_{\var}^{L_{p, 0}}$.
Note that over the ball $B(x_{0}, 2\var)$, $D_{p}$ is just
the restriction of $D^{X_{0}}_{p}$.

Let $\nabla^{T^{(1, 0)}X_{0}}$ be the holomorphic Hermitian connection
on $(T^{(1, 0)}X_{0}, h^{T^{(1, 0)}X_{0}})$ with curvature 
$R^{T^{(1, 0)}X_{0}}$.
It induces naturally a connection $\nabla^{T^{(0, 1)}X_{0}}$ 
on $T^{(0, 1)}X_{0}$.
Set $\widetilde{\nabla}^{TX_{0}}
=\nabla^{T^{(1, 0)}X_{0}}\oplus\nabla^{T^{(0, 1)}X_{0}}$.
Then $\widetilde{\nabla}^{TX_{0}}$ is a connection on 
$TX_{0}\otimes_{\R}\C$.

Let $T_{0}$ be the torsion of the connection 
$\widetilde{\nabla}^{TX_{0}}$ and
$T_{0, as}$ be the anti-symmetrization of the tensor
$V, W, Y\rightarrow \langle T_{0}(V, W), Y\rangle$.
Let $\nabla^{Cl_{0}}$ be the Clifford connection on
$\Lambda(T^{\ast(0, 1)}X_{0})$ (cf. \cite[(1.3.5)]{MM07}).
Define the operator $^{c}(\cdot)$ on
$\Lambda(T^{\ast}X_{0}\otimes_{\R}\C)$  by
$^{c}(e^{i_{1}}\wedge\ldots\wedge e^{i_{j}})
=c(e_{i_{1}})\ldots c(e_{i_{j}})$
for $1\leqslant i_{1}<\ldots<i_{j}\leqslant 2n$.
Set
\begin{equation}
\nabla_{U}^{A_{0}}=\nabla_{U}^{Cl_{0}}
-\frac{1}{4}\ ^{c}(i_{U}T_{0, as}).
\end{equation}
Then as explained in \cite[(1.4.27)-(1.4.28)]{MM07}, 
$\nabla^{A_{0}}$ preserves the $\Z$-grading on 
$\Lambda(T^{\ast(0, 1)}X_{0})$. 
Let $\nabla^{A_{0}\otimes L_{p, 0}}$ be the connection
on $\Lambda(T^{\ast(0, 1)}X_{0})\otimes L_{p, 0}$
induced by $\nabla^{A_{0}}$ and $\nabla^{L_{p, 0}}$ as in (\ref{1.7a}).
Denote by $\Delta^{A_{0}\otimes L_{p, 0}}$ the Bochner
Laplacian on $\Lambda(T^{\ast(0, 1)}X_{0})\otimes L_{p, 0}$
associated to $\nabla^{A_{0}\otimes L_{p, 0}}$.
By \cite[(1.2.51), (1.4.29)]{MM07}, we have
\begin{equation}\label{2.14a}
\begin{split}
(D^{X_{0}}_{p})^{2}=\Delta^{A_{0}\otimes L_{p, 0}}
&+\frac{r^{X_{0}}}{4}
+ \ ^{c}\Big(R^{L_{p, 0}}+\frac{1}{2}{\rm Tr}[R^{T^{(1, 0)}X_{0}}]\Big)
\\ 
&-\frac{1}{4}\ ^{c}(dT_{0, as})-\frac{1}{8}\big|T_{0, as}\big|^{2},
\end{split}
\end{equation}
where the norm $|A|$ for $A\in \Lambda^{3}(T^{\ast}X_{0})$ is given by
$|A|^{2}=\sum_{i<j<k}|A(e_{i}, e_{j}, e_{k})|^{2}$.
By Theorem \ref {t1.2a} 
we get from (\ref{2.21b}) the existence of $C>0$ such that
for any $p\in \N^{\ast}$,
\begin{equation}\label{2.21}
{\rm Spec}\big((D^{X_{0}}_{p})^{2}\big) \subset \{0\}
\cup [a_{p}-C, \infty).
\end{equation}
Note that from (\ref{2.13a}), $(D^{X_{0}}_{p})^{2}$ preserves 
the $\Z$-grading on $\Omega^{0, \bullet}(X_{0}, L_p)$. 

Let $S_{p, x_{0}}$ be the unitary section of $(L_{p, 0}, h_{p, 0})$
which is
parallel with respect to $\nabla_{\var}^{L_{p, 0}}$ along the curve
$[0, 1]\ni u\rightarrow uZ$ for any $Z\in X_{0}$.
The unitary frame $S_{p, x_{0}}$ provides an isometry 
$L_{p, 0}\simeq \C$. Let $P^{0}_{p}$ be the orthogonal projection from
$\cC^{\infty}(X_{0}, L_{p, 0}) \simeq \cC^{\infty}(X_{0}, \C)$
on $\Ker D^{X_{0}}_{p}$, and let $P^{0}_{p}(x, x')$
be the smooth kernel of
$P^0_{p}$ with respect to the volume form $dv_{X_{0}}(x')$.

\begin{prop}
For any $l,m\in \N$, there exists $C_{l,m}>0$ such that for 
$x, x' \in B^{T_{x_{0}}X}(0, \var)$,
\begin{equation}\label{2.22}
\Big|P^{0}_{p}(x,x)-P_{p}(x, x)\Big|_{\cC^{m}} 
\leqslant C_{l,m} A_{p}^{-l}.
\end{equation}
\end{prop}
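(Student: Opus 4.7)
The plan is to establish \eqref{2.22} by a localization argument strictly parallel to Proposition \ref{t2.1}, followed by matching the two kernels via finite propagation speed. First, Proposition \ref{t2.1} itself gives
\begin{equation*}
\big|F(D_p)(x,x') - P_p(x,x')\big|_{\cC^m} \leqslant C_{l,m} A_p^{-l}
\quad\text{for } x,x'\in X.
\end{equation*}
Next I would repeat that same argument with $X$ replaced by $X_0$ and $D_p$ replaced by $D^{X_0}_p$, using the spectral gap \eqref{2.21} in place of Theorem \ref{t1.2}. This step is legitimate because $(X_0, g^{TX_0})$ is complete with bounded Ricci curvature (the metric agrees with the constant Euclidean metric $g^{TX}(0)$ outside $\{|Z|\leqslant 4\var_0\}$), so Theorem \ref{t1.2a} applies and yields the decomposition $F(D^{X_0}_p) - P^0_p = \phi_p(D^{X_0}_p)$ with $\phi_p$ as in \eqref{2.3}. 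Combined with a Sobolev estimate of the type \eqref{2.14} for $D^{X_0}_p$, this gives
\begin{equation*}
\big|F(D^{X_0}_p)(x,x') - P^0_p(x,x')\big|_{\cC^m} \leqslant C_{l,m} A_p^{-l}
\quad\text{uniformly for } x,x'\in B^{T_{x_0}X}(0,\var).
\end{equation*}

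The last ingredient is the finite propagation speed of $\cos(tD_p)$ and $\cos(tD^{X_0}_p)$ (cf.\ \cite[Theorem D.2.1]{MM07}): since $f$ in \eqref{2.1} has support in $[-\var_0,\var_0]$ and we can arrange $\var_0<\var$, the kernel $F(D_p)(x,x')$ depends only on the restriction of $D_p$ to the ball $B(x,\var_0)$, and likewise for $F(D^{X_0}_p)$. By construction $D_p = D^{X_0}_p$ on $B(x_0,2\var)$ (as noted just before \eqref{2.13a}), so for $x,x'\in B^{T_{x_0}X}(0,\var)$ both kernels are computed from the same operator restricted to $B(x_0,2\var)$. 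Hence $F(D_p)(x,x') = F(D^{X_0}_p)(x,x')$ in that region, and combining this identity with the two $O(A_p^{-l})$ bounds above yields \eqref{2.22}.

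The main obstacle is checking that the Sobolev chain \eqref{2.10}--\eqref{2.14} transfers to the non-compact manifold $X_0$. The key point to verify is a uniform bound $\|\Gamma^{L_{p,0}}_\var\|_{\cC^k(K)} \leqslant C_{k,K} A_p$ on any compact $K \subset X_0$: via the integrated form \eqref{2.8} this reduces to the $\cC^k$-control of $R^{L_{p,0}}_\var$, which is in turn guaranteed by the cutoff construction \eqref{2.20} together with the $\cC^k$ estimate \eqref{2.18b} on $f_p$ and the hypothesis \eqref{0.8}. Once these bounds are in hand, the elliptic estimate \eqref{2.10}, the commutator bound \eqref{2.12}, and the iterated inequality \eqref{2.14} all go through unchanged for $D^{X_0}_p$, and the adaptation of \cite[Proposition 4.1]{DLM04} completes the proof.
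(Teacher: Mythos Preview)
Your approach is correct and is exactly the paper's (terse) argument: bound $F(D_p)-P_p$ by Proposition~\ref{t2.1}, run the same proof on $X_0$ using the spectral gap \eqref{2.21} to bound $F(D^{X_0}_p)-P^0_p$, and identify $F(D_p)=F(D^{X_0}_p)$ on the small ball by finite propagation speed. One small slip: in the paper's setup $\var<\var_0$, not the other way around, so for the propagation-speed step you should take a cutoff $f$ with support in $[-\var,\var]$ (Proposition~\ref{t2.1} holds for any such choice) so that the wave kernel only probes $B(x_0,2\var)$ where $D_p=D^{X_0}_p$.
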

\begin{proof} Using (\ref{2.1}) and (\ref{2.21}), we know that 
	$P^{0}_{p}-F(D_p)$ verifies
also (\ref{2.2}) for $x, x' \in B^{T_{x_{0}}X}(0, \var)$,
thus we get (\ref{2.22}).
\end{proof}

Now under the natural identification ${\rm End}(L_{p}) \simeq \C$
(which does not depend on $S_{p, x_{0}}$), we will consider 
$(D^{X_0}_{p})^{2}$ acting
on $\cC^{\infty}(X_{0}, \C)$.
Let $dv_{TX}$ be the Riemannian volume form of 
$(T_{x_{0}}X, g^{T_{x_{0}}X})$.
Let $\kappa(Z)$ be the smooth positive function defined by the equation
\begin{equation}
dv_{X_{0}}(Z)=\kappa(Z)dv_{TX}(Z),
\end{equation}
with $\kappa(0)=1$.
For $s\in \cC^{\infty}(\R^{2n}, \C)$,
$Z\in \R^{2n}$ and $t = \frac{1}{\sqrt{A_{p}}}$, set
\begin{align}\begin{split}\label{2.23}
(\delta_{t}s)(Z) =& s(Z/t),
\\
\nabla_{t,\bullet} =& \delta^{-1}_{t} t\kappa^{1/2}
\nabla^{L_{p, 0}}\kappa^{-1/2}\delta_{t},
 \\
\mathscr{L}^t_2 =& \delta^{-1}_{t}t^{2}
\kappa^{1/2}(D^{X_{0}}_{p})^{2} \kappa^{-1/2}\delta_{t}.
\end{split}
\end{align}

\subsection{Asymptotics of the scaled operators}
Let $\{w_{j}\}^{n}_{j=1}$ be an orthonormal basis of 
$T_{x_{0}}^{(1, 0)}X$.
Then
\begin{equation}\label{2.24}
e_{2j-1}=\frac{1}{\sqrt{2}}(w_{j}+\ov{w}_{j})\ \ {\rm and}\ \
e_{2j}=\frac{\sqrt{-1}}{\sqrt{2}}(w_{j}-\ov{w}_{j}), j=1, \ldots, n,
\end{equation}
form an orthonormal basis of $T_{x_{0}}X$.
Set
\begin{equation}
\begin{split}\label{2.25}
\nabla_{0, \bullet}
 =& \nabla_{\bullet}+\frac{1}{2}\gamma_{x_{0}} (Z, \cdot),\ 
 \text{with}\ \gamma=-2\pi\sqrt{-1}\omega,
\\
\mathscr{L}^{0}_{2} =&
-\sum^{2n}_{j=1}(\nabla_{0, e_{j}})^{2}
-\gamma_{x_{0}}(w_{j}, \ov{w}_{j}).
\end{split} \end{equation}

\begin{lemma} \label{t2.3}
The following holds as $t\rightarrow 0$\,:
\begin{equation}\label{2.26}
\nabla_{t, \bullet}=\nabla_{0, \bullet}
+\mathscr{O}(t^{{\rm min}(1, 2a)}), \ \
\mathscr{L}^{t}_{2}=\mathscr{L}^{0}_{2}
+\mathscr{O}(t^{{\rm min}(1, 2a)}).
\end{equation}
\end{lemma}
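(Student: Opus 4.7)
The plan is to compute the scaled objects termwise, using the explicit representation \eqref{2.8} of the connection form $\Gamma^{L_{p,0}}$ and extracting leading behaviour from \eqref{0.8} combined with Taylor expansion of $R^{L_{p,0}}$ at $x_0$. Throughout, $Z$ ranges over a fixed bounded subset of $X_0$ so that for $t$ small $tZ\in B(0,2\var)$, and the metric $h^{L_{p,0}}_\var$ coincides with the pullback of $h_p$ in this range.

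First I would treat the connection. Rescaling yields $t\,\delta_t^{-1}\nabla^{L_{p,0}}\delta_t = d + t\,\Gamma^{L_{p,0}}_{tZ}$, and by \eqref{2.8}
\[
t\,\Gamma^{L_{p,0}}_{tZ}(v)
= t\int_0^1 R^{L_{p,0}}_{utZ}(utZ,v)\,du.
\]
Taylor-expanding the curvature at the origin produces a leading contribution $\tfrac{t^2}{2}R^{L_{p,0}}_{x_0}(Z,v)$ plus a remainder bounded by $\|\nabla R^{L_p}\|_{\cC^0}\,t^3|Z|^2$ (and similarly for higher-order Taylor terms). Since $\|R^{L_p}\|_{\cC^k}=O(A_p)=O(t^{-2})$ for every $k$ by \eqref{0.8} and \eqref{2.18b}, the remainder is $O(t)$ uniformly on compact sets. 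The leading term equals $\tfrac12 A_p^{-1}R^{L_p}_{x_0}(Z,v)=\tfrac12\gamma_{x_0}(Z,v)+O(t^{2a})$, using $R^{L_p}=-2\pi\sqrt{-1}\,c_1(L_p,h_p)$ together with \eqref{0.8}. Finally, the $\kappa^{1/2}$-conjugation contributes $-\tfrac{t}{2}\,d\log\kappa(tZ)=O(t)$ since $\kappa(0)=1$. Combining these pieces yields $\nabla_{t,\bullet}=\nabla_{0,\bullet}+O(t^{\min(1,2a)})$, and applying the same reasoning to additional $\partial_Z$-derivatives gives the estimate in every $\cC^m$-norm.

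For $\mathscr{L}^t_2$ I would insert the Lichnerowicz-type formula \eqref{2.14a} and rescale each term. After the standard $\kappa^{1/2}$-trick absorbing the $\nabla^{TX_0}_{e_j}e_j$ correction, the Bochner piece $t^2\kappa^{1/2}\Delta^{A_0\otimes L_{p,0}}\kappa^{-1/2}$ becomes $-\sum_j(\nabla_{t,e_j})^2$ modulo smooth $p$-independent terms of order $O(t^2)$ coming from the Christoffel symbols of $g^{TX_0}$, and by the first part this matches $-\sum_j(\nabla_{0,e_j})^2$ up to $O(t^{\min(1,2a)})$. The scalar curvature $r^{X_0}/4$, the trace $\tfrac12\mathrm{Tr}[R^{T^{(1,0)}X_0}]$, and the torsion pieces in \eqref{2.14a} are $p$-independent, so after multiplication by $t^2$ they contribute $O(t^2)$. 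The leading Clifford curvature term $t^2\,^c(R^{L_{p,0}})_{tZ}$ equals $\,^c(\gamma_{x_0})+O(t^{\min(1,2a)})$ by the curvature estimate above, and on the scalar sector this produces exactly the $-\gamma_{x_0}(w_j,\ov{w}_j)$ of \eqref{2.25}.

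The main obstacle is reconciling the two independent sources of error: the geometric Taylor remainder of order $t$, controlled only because \eqref{0.8} and \eqref{2.18b} give uniform $\cC^k$-bounds on $A_p^{-1}R^{L_p}$, and the $O(t^{2a})$ error intrinsic to the convergence assumption \eqref{0.8}. Their superposition forces the exponent $\min(1,2a)$ in \eqref{2.26}, and the $\cC^k$-nature of \eqref{0.8} is what allows all bounds to propagate to every $\cC^m$-norm on bounded subsets of $X_0$, giving the stated asymptotics in the $\cC^\infty$ topology.
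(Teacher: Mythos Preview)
Your argument is correct and follows essentially the same route as the paper: expand $t\Gamma^{L_p}_{tZ}$ via the integral formula \eqref{2.8}, split off the leading term $\tfrac{t^2}{2}R^{L_p}_{x_0}(Z,\cdot)$, replace $t^2R^{L_p}$ by $\gamma$ with error $O(t^{2a})$ using \eqref{0.8}, and bound the Taylor remainder by $O(t)$ using the uniform $\cC^k$ control on $A_p^{-1}R^{L_p}$; then handle $\kappa$, $g^{ij}(tZ)$ and the Christoffel terms by their $p$-independence.  The only cosmetic difference is that the paper observes that on the range $|tZ|<2\varepsilon$ the extended metric $g^{TX_0}$ agrees with the original K\"ahler metric, so the torsion terms $T_{0,as}$, $dT_{0,as}$ and the combination $\tfrac{r^{X_0}}{4}-\tfrac12\mathrm{Tr}[R^{T^{(1,0)}X_0}](\tilde w_j,\overline{\tilde w}_j)$ vanish identically there (rather than merely being $p$-independent $O(t^2)$ contributions), which streamlines the formula for $\mathscr{L}^t_2$ to \eqref{2.30}.
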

\begin{proof}
By (\ref{2.14a}), when we restrict on $\cC^{\infty}(\R^{2n}, \C)$, we get
\begin{equation}\begin{split}
(D^{X_{0}}_{p})^{2} =& \Delta^{A_{0}\otimes L_{p, 0}}
-R^{L_{p, 0}}(\tilde{w}_{j}, \ov{\tilde{w}}_{j})
\\ & +\frac{r^{X_{0}}}{4}
-\frac{1}{2}\text{Tr}\big[R^{T^{(1, 0)}X_{0}}\big]
(\tilde{w}_{j}, \ov{\tilde{w}}_{j})
-\frac{1}{4}\ ^{c}(dT_{0, as})-\frac{1}{8}\big|T_{0, as}\big|^{2},
\end{split}\end{equation}
where $\tilde{w}_{j}$ denotes the parallel transport of $w_{j}$
along the curve $[0, 1]\ni u\rightarrow uZ$.
Let $\Gamma^{A_{0}}$ be the connection form of $\nabla^{A_{0}}$ 
with
respect to the orthonormal frame of $\Lambda(T^{\ast(0, 1)}X_{0})$
which is parallel along the above curve.
Denote by $\Gamma^{L_{p, 0}}$ the connection form 
of $\nabla^{L_{p, 0}}$
with respect to the frame $S_{p, x_{0}}$.
Set $g_{ij}(Z) = g^{TX_{0}}(e_i, e_j)(Z) = \langle e_i, e_j\rangle|_{Z}$
and let $(g^{ij}(Z))$ be the inverse of the matrix $(g_{ij}(Z))$ 
and $\nabla_{e_{i}}^{TX_{0}}e_{j}=\Gamma^{k}_{ij}(Z)e_{k}$ for the 
Levi-Civita connection $\nabla^{TX_{0}}$ on $(TX_{0}, g^{TX_{0}})$.
Then
\begin{align}\begin{split}\label{2.29}
\nabla_{t,\bullet} =&\, \kappa^{\frac{1}{2}}(tZ)
\big(\nabla_{\bullet}+t\Gamma_{tZ}^{A_{0}}
+t \Gamma^{L_{p, 0}}_{tZ}\big)\kappa^{-\frac{1}{2}}(tZ),
\\
\mathscr{L}^{t}_{2}=&\, g^{ij}(tZ)
\big(\nabla_{t, e_{i}}\nabla_{t,e_j}
-t\Gamma^{k}_{ij}(tZ)\nabla_{t,e_{k}}\big)
- t^{2} R^{L_{p, 0}}_{tZ}(\tilde{w}_{j}, \ov{\tilde{w}}_{j})
\\ &
+\frac{r_{tZ}^{X_{0}}}{4}
-\frac{1}{2}\text{Tr}\big[R_{tZ}^{T^{(1, 0)}X_{0}}\big]
(\tilde{w}_{j}, \ov{\tilde{w}}_{j})
-\frac{t^{2}}{4}\ ^{c}\big((dT_{0, as})_{tZ}\big)
-\frac{t^{2}}{8}\big|(T_{0, as})_{tZ}\big|^{2}.
\end{split} \end{align}
If $|tZ| < 2\var$, then $\rho(|tZ|/\var) = 1$ and $D^{X_{0}}_{p}=D_{p}$,
in particular, $\Gamma^{A_{0}}_{tZ}=0$ on 
$\Lambda^{0}(T^{\ast(0, 1)}X_{0})=\C$, 
$T_{0}=0, T_{0, as}=0$ and $\frac{r^{X_{0}}}{4}
-\frac{1}{2}\text{Tr}\big[R^{T^{(1, 0)}X_{0}}\big]
(\tilde{w}_{j}, \ov{\tilde{w}}_{j})=0$.
Thus for $|tZ| < 2\var$, the operators 
$\nabla_{t,\bullet}, \mathscr{L}^{t}_{2}$ are given
by
\begin{align}\begin{split}\label{2.30}
\nabla_{t,\bullet} =&\, \kappa^{\frac{1}{2}}(tZ)
\big(\nabla_{\bullet}+t \Gamma^{L_{p}}_{tZ}\big)
\kappa^{-\frac{1}{2}}(tZ),\\
\mathscr{L}^{t}_{2}=&\, g^{ij}(tZ)
\big(\nabla_{t, e_{i}}\nabla_{t,e_j}
-t\Gamma^{k}_{ij}(tZ)\nabla_{t,e_{k}}\big)
- t^{2} R^{L_{p}}_{tZ}(\tilde{w}_{j}, \ov{\tilde{w}}_{j}).
\end{split}\end{align}
By \cite[(1.2.31)]{MM07},
\begin{equation}
\Gamma^{L_{p}}_{Z}(e_j)
=\frac{1}{2}R_{x_{0}}^{L_{p}}(Z, e_j)
+\mathscr{O}\big(|Z|^{2}|\Gamma^{L_{p}}|_{\cC^{2}}\big),
\end{equation}
which implies
\begin{equation}\label{2.32}
t\Gamma^{L_{p}}_{tZ}(e_j)
=\frac{t^{2}}{2}R_{x_{0}}^{L_{p}}(Z, e_j)
+\mathscr{O}\big(t^{3}|\Gamma^{L_p}|_{\cC^{2}}\big).
\end{equation}
By   (\ref{2.25}), \eqref{0.8} turns to
\begin{equation}\label{0.9}
\Big|\frac{R^{L_{p}}}{A_p}-\gamma\Big|_{\cC^{k}}
\leqslant \frac{2\pi C_{k}}{A^{a}_{p}},
\end{equation}
By (\ref{0.9}),
\begin{equation}\label{2.33}
\Big|t^2R^{L_p}-\gamma\Big|_{\cC^{k}}\leqslant 2\pi C_{k}t^{2a}.
\end{equation}
Combining \eqref{2.8}, \eqref{2.32} and \eqref{2.33} yields
\begin{equation}\label{2.35}
t\Gamma^{L_{p}}_{tZ}(e_j)=\frac{1}{2}\gamma_{x_{0}}(Z, e_{j})
+\mathscr{O}(t^{{\rm min}(1, 2a)}).
\end{equation}
By (\ref{2.33}),
\begin{equation}\label{2.36}
t^{2} R^{L_{p}}_{tZ}(\tilde{w}_{j}, \ov{\tilde{w}}_{j})=
\gamma_{x_{0}}(w_{j}, \ov{w}_{j})+\mathscr{O}(t^{{\rm min}(1, 2a)}).
\end{equation}

\noindent
Moreover,
\begin{equation}\label{2.37}
 \kappa(Z) =\det(g_{ij}(Z))^{1/2}.
 \end{equation}
 By \cite[(1.2.19)]{MM07},
 \begin{equation}\label{2.38}
 g_{ij}(Z)=\delta_{ij}+\mathscr{O}(|Z|^{2}).
 \end{equation}
Then (\ref{2.26}) follows from (\ref{2.30}) and (\ref{2.35})--(\ref{2.38}).
The proof of Lemma \ref{t2.3} is completed.
\end{proof}

To prove Theorem \ref{t1}, we need a refinement of Lemma \ref{t2.3}.
Note that $a=a_{0}+a_{1}$ with $a_{0}=-\lfloor -a \rfloor -1 \in \N$ 
and $0<a_{1}\leqslant 1$.
\begin{thm} \label{t3.1} 
The following holds as $t\rightarrow 0$: 
If $\frac{1}{2}<a_{1}\leqslant 1$\,, then
\begin{equation}\label{3.1a}
\mathscr{L}^{t}_{2}=\mathscr{L}^{0}_{2}
+\sum^{2a_{0}+1}_{r=1}t^{r}\mathcal{O}_{r}+\mathscr{O}(t^{2a}).
\end{equation}
If $0<a_{1}\leqslant \frac{1}{2}$\,, then
\begin{equation}\label{3.1b}
\mathscr{L}^{t}_{2}=\mathscr{L}^{0}_{2}
+\sum^{2a_{0}}_{r=1}t^{r}\mathcal{O}_{r}+\mathscr{O}(t^{2a}).
\end{equation}
\end{thm}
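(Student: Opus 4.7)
The plan is to refine Lemma \ref{t2.3} by carrying out a finite Taylor expansion, in the variable $tZ$, of every coefficient appearing in the formula (\ref{2.30}) for $\mathscr{L}^t_2$, and then to collect powers of $t$. Two types of error must be tracked independently: (i) Taylor tails of the purely geometric data $\kappa(tZ)$, $g^{ij}(tZ)$, $\Gamma^{k}_{ij}(tZ)$ and of the fixed smooth form $\gamma=-2\pi\sqrt{-1}\omega$, which can be made $O(t^{N+1})$ for any $N$ we like; and (ii) the rigid discrepancy $|t^2 R^{L_p}-\gamma|_{\cC^k}\leqslant C_k t^{2a}$ coming from (\ref{0.9}), which no further expansion can improve.

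The first step is to treat the two line-bundle ingredients $t\Gamma^{L_p}_{tZ}(e_j)$ and $t^2 R^{L_p}_{tZ}(\tilde w_j,\overline{\tilde w}_j)$. Using the integral formula (\ref{2.8}), I would rewrite $t\Gamma^{L_p}_{tZ}(e_j)=\sum_k Z_k\int_0^1 s\,(t^2 R^{L_p})_{stZ}(e_k,e_j)\,ds$, split $t^2 R^{L_p}=\gamma+(t^2 R^{L_p}-\gamma)$, and bound the second summand uniformly in every $\cC^m$-norm by $O(t^{2a})$ via (\ref{0.9}). The first summand is a fixed smooth $2$-form, so its Taylor expansion at $x_0$ can be pushed to any order, yielding $t\Gamma^{L_p}_{tZ}(e_j)=\tfrac{1}{2}\gamma_{x_0}(Z,e_j)+\sum_{r=1}^{N}t^r L_r(Z)(e_j)+O(t^{N+1})+O(t^{2a})$ with polynomials $L_r$ depending only on covariant derivatives of $\omega$ at $x_0$; the same procedure applies to $t^2 R^{L_p}_{tZ}(\tilde w_j,\overline{\tilde w}_j)$. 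The geometric coefficients receive the standard expansions $\kappa^{1/2}(tZ)=1+O(t^2)$, $g^{ij}(tZ)=\delta^{ij}+O(t^2)$ (the order-one piece vanishes by (\ref{2.38})), and $t\Gamma^{k}_{ij}(tZ)=O(t)$, each realized as a series of $p$-independent polynomials in $Z$.

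Plugging all these expansions into (\ref{2.30}) and regrouping, using the definition (\ref{2.25}) of $\mathscr{L}^0_2$ to identify the terms of order zero in $t$, produces $\mathscr{L}^t_2=\mathscr{L}^0_2+\sum_{r=1}^{N}t^r\mathcal{O}_r+R_N(t)$ with $R_N(t)=O(t^{N+1})+O(t^{2a})$, the operators $\mathcal{O}_r$ depending only on covariant derivatives at $x_0$ of $\omega$ and of the background metric. To conclude, one picks the largest integer $N$ consistent with $t^{N+1}=O(t^{2a})$, i.e.\ $N+1\geqslant 2a$. With $a=a_0+a_1$, $a_0=-\lfloor -a\rfloor-1\in\N$ and $0<a_1\leqslant 1$, we have $2a=2a_0+2a_1$: when $a_1>\tfrac{1}{2}$ the choice $N=2a_0+1$ is admissible (since $2a_0+2>2a$), yielding (\ref{3.1a}); when $a_1\leqslant\tfrac{1}{2}$ only $N=2a_0$ is admissible and the $t^{2a_0+1}$-term must be absorbed into $O(t^{2a})$, yielding (\ref{3.1b}). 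The region $|tZ|\geqslant 2\var$ contributes only through the $\rho$-cutoffs in (\ref{2.29}), where the difference from (\ref{2.30}) is supported where $|Z|\geqslant 2\var/t$ and may be bounded by arbitrary powers of $t$, so it is absorbed into the remainder. The main obstacle is keeping the two sources of error separate throughout and verifying that the dichotomy between (\ref{3.1a}) and (\ref{3.1b}) falls exactly at $a_1=\tfrac{1}{2}$; once this bookkeeping is organized, the computation is mechanical.
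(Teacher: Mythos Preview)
Your proposal is correct and follows essentially the same route as the paper's proof: both expand $t\Gamma^{L_p}_{tZ}(e_j)$ by replacing $t^2R^{L_p}$ with $\gamma$ up to the rigid $O(t^{2a})$ error from (\ref{0.9}) and then Taylor-expanding $\gamma$ (the paper does this via \cite[(1.2.30)]{MM07}, you via the integral formula (\ref{2.8}) and a direct Taylor expansion of $\gamma$; these are equivalent), plug into (\ref{2.30}), and choose the expansion order so that the Taylor tail is swallowed by $O(t^{2a})$. One wording slip: you want the \emph{smallest} integer $N$ with $N+1\geqslant 2a$ (equivalently, the largest $N$ with $N<2a$), not the ``largest integer $N$ consistent with $t^{N+1}=O(t^{2a})$''; your actual choices $N=2a_0+1$ for $a_1>\tfrac12$ and $N=2a_0$ for $a_1\leqslant\tfrac12$ are the right ones.
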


\begin{proof}
Consider the Taylor expansion
\begin{equation}\label{3.3a}
\Gamma^{L_{p}}_{Z}(e_j)=\sum^{k}_{r=1}\sum_{|\alpha|=r}
\big(\partial^{\alpha}\Gamma^{L_{p}}\big)_{x_{0}}(e_{j})
\frac{Z^{\alpha}}{\alpha!}
+\mathscr{O}\big(|Z|^{k+1}|\Gamma^{L_{p}}|_{\cC^{k+1}}\big).
\end{equation}
From \cite[(1.2.30)]{MM07} and (\ref{3.3a}), we obtain
\begin{equation}\label{3.3}
\Gamma^{L_{p}}_{Z}(e_j)
=\sum^{k}_{r=1}\frac{1}{r+1}\sum_{|\alpha|=r-1}
\big(\partial^{\alpha}R^{L_{p}}\big)_{x_{0}}(Z, e_{j})
\frac{Z^{\alpha}}{\alpha!}
+\mathscr{O}\big(|Z|^{k+1}|\Gamma^{L_{p}}|_{\cC^{k+1}}\big).
\end{equation}
Combining (\ref{2.8}), (\ref{0.9}) and (\ref{3.3}) yields
\begin{equation}\label{3.5}
t\Gamma^{L_{p}}_{tZ}(e_j)=\sum^{k}_{r=1}
\frac{t^{r-1}}{r+1}\sum_{|\alpha|=r-1}
\big(\partial^{\alpha}\gamma\big)_{x_{0}}(Z, e_{j})
\frac{Z^{\alpha}}{\alpha!}
+\mathscr{O}(t^{{\rm min}(2a, k)}).
\end{equation}
If $\frac{1}{2}<a_{1}\leqslant 1$, then we take $k=2a_{0}+2$ in \eqref{3.5}
and we obtain
\begin{equation}\label{3.6a}
t\Gamma^{L_{p}}_{tZ}(e_j)=\sum^{2a_{0}+2}_{r=1}
\frac{t^{r-1}}{r+1}\sum_{|\alpha|=r-1}
\big(\partial^{\alpha}\gamma\big)_{x_{0}}(Z, e_{j})
\frac{Z^{\alpha}}{\alpha!}
+\mathscr{O}(t^{2a}).
\end{equation}
If $0<a_{1}\leqslant \frac{1}{2}$, then we take
$k=2a_{0}+1$ in \eqref{3.5} and obtain 
\begin{equation}\label{3.6c}
t\Gamma^{L_{p}}_{tZ}(e_j)=\sum^{2a_{0}+1}_{r=1}
\frac{t^{r-1}}{r+1}\sum_{|\alpha|=r-1}
\big(\partial^{\alpha}\gamma\big)_{x_{0}}(Z, e_{j})
\frac{Z^{\alpha}}{\alpha!}
+\mathscr{O}(t^{2a}).
\end{equation}
Then \eqref{3.1a} and \eqref{3.1b}) follow
from (\ref{2.30}), (\ref{2.36}), (\ref{3.6a}) and (\ref{3.6c}).
\end{proof}
%

\subsection{Bergman kernel}
Now we discuss the eigenvalues and eigenfunctions of 
$\mathscr{L}^{0}_{2}$ in detail.
We choose $\{w_{j}\}^{n}_{j=1}$ an orthonormal basis of
$T^{(1,0)}_{x_{0}}X$ such that
\begin{equation}\label{2.39}
\gamma_{x_{0}}(w_j, \ov{w}_j) = a_j,\ \  a_j>0.
\end{equation}
Let $\{w^j\}^{n}_{j=1}$ be its dual basis.
Then $\{e_{j}\}^{2n}_{j=1}$ given by (\ref{2.24}) forms 
an orthonomal basis of $T_{x_{0}}X$.
We use the coordinates
on $\R^{2n}\simeq T_{x_{0}}X$ induced by $e_{j}$ as
\begin{equation}
\R^{2n} \ni(Z_{1}, \ldots, Z_{2n}) 
\longmapsto \sum^{2n}_{j=1}Z_{j}e_{j} \in T_{x_{0}}X.
 \end{equation}
In what follows we also introduce the complex coordinates
$z = (z_1, \ldots, z_n)$ on $\C^{n}\simeq \R^{2n}$.
Thus $Z = z + \ov{z}$,
and $w_{j} =\sqrt{2}\frac{\partial}{\partial z_{j}}$,
$\ov{w}_{j}=\sqrt{2}\frac{\partial}{\partial \ov{z}_{j}}$.
We will also identify $z$ to $\sum_{j} z_{j}\frac{\partial}{\partial z_{j}}$
and $\ov{z}$ to $\sum_{j}\ov{z}_{j}\frac{\partial}{\partial \ov{z}_{j}}$
when we consider $z$ and $\ov{z}$ as vector fields. Remark that
\begin{equation}
\Big|\frac{\partial}{\partial z_{j}}\Big|^{2}=
\Big|\frac{\partial}{\partial \ov{z}_{j}}\Big|^{2}=\frac{1}{2},\ \
{\rm so\ that}\ \ |z|^2 = |\ov{z}|^2 =\frac{1}{2}|Z|^2.
 \end{equation}
It is very useful to rewrite $\mathscr{L}^{0}_{2}$ by
using the creation and annihilation operators. Set
\begin{equation}
b_{j}=-2\nabla_{0, \frac{\partial}{\partial z_{j}}}, \ \
b^{+}_{j} = 2\nabla_{0, \frac{\partial}{\partial \ov{z}_{j}}},\ \
b=(b_1, \ldots, b_{n}).
\end{equation}
Then by (\ref{2.25}) and (\ref{2.39}), we have
\begin{equation}
b_j =-2\frac{\partial}{\partial z_{j}}+\frac{1}{2}a_{j}\ov{z}_{j}, \ \
b^{+}_{j}=2\frac{\partial}{\partial \ov{z}_{j}}+\frac{1}{2}a_jz_{j}.
\end{equation}
Then
\begin{equation}
\mathscr{L}^{0}_{2}=\sum^{n}_{j=1}b_{j}b^{+}_{j}.
\end{equation}
Let $\mathscr{P}: \big(\mL^2(\R^{2n}), \| \cdot \|_{\mL^{2}}\big)
\rightarrow\Ker(\mathscr{L}^{0}_{2})$ be the orthogonal projection. 
Denote by $\mathscr{P}(x, y)$
the Schwartz kernel of $\mathscr{P}$. By \cite[Theorem 4.1.20]{MM07},
\begin{equation}
\mathscr{P}(Z, Z') =\prod^{n}_{j=1} \frac{a_{j}}{2\pi}
{\rm exp}\Big[-\frac{1}{4}\sum_{j}a_j\big(|z_j|^{2}+|z'_j|^{2}
-2z_j\ov{z}'_{j}\big)\Big]
\end{equation}
In particular,
\begin{equation}\label{2.45}
\mathscr{P}(0, 0) =\prod^{n}_{j=1} \frac{a_{j}}{2\pi}
=\frac{\omega^{n}}{\vartheta^{n}}\,\cdot
\end{equation}

\subsection{Proof of Theorem \ref{t1}}

Denote by $\langle\cdot, \cdot\rangle$ and $\|\cdot\|_{0}$ 
the inner product and the $L^{2}$-norm
on $\cC^{\infty}(X_{0}, \C)$ induced by $g^{TX_{0}}$. 
For $s\in \cC_{0}^{\infty}(X_{0}, \C)$, set
\begin{equation}\begin{split}
\|s\|^{2}_{t, 0}&:=\|s\|^{2}_{0}=\int_{\R^{2n}}|s(Z)|^{2}dv_{TX}(Z),\\
\|s\|^{2}_{t, m}&:=\sum^{m}_{l=0}\,\sum^{2n}_{j_{1}, \ldots, j_{l}=1}
\|\nabla_{t, e_{j_{1}}}\ldots \nabla_{t, e_{j_{l}}}s\|^{2}_{t, 0}\,.
\end{split}
\end{equation}
By (\ref{2.33}), we have the following analogue of 
\cite[Theorem 4.1.9]{MM07}.
\begin{thm}\label{t3.2}
There exist $C_1, C_2, C_3>0$ such that for $t\in (0, 1)$ 
and any
$s, s'\in \cC_{0}^{\infty}(\R^{2n}, \C)$,
\begin{equation}\begin{split}\label{3.7}
&\big\langle \mathscr{L}_{2}^t s, s\big\rangle_{t}
 \geqslant
C_{1}\big\|s\big\|^{2}_{t,1}-C_{2}\big\|s\big\|^{2}_{t,0}\,,
\\
&\Big|\big\langle \mathscr{L}^{t}_{2}s, s'\big\rangle_{t, 0} \Big|
 \leqslant
C_3\big\|s\big\|_{t,1}\big\|s'\big\|_{t,1}\,.
\end{split} \end{equation}
\end{thm}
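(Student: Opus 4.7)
The proof follows the template of \cite[Theorem 4.1.9]{MM07}, the main new ingredient being estimate \eqref{2.33}, which guarantees that the rescaled curvature $t^2 R^{L_p}$ remains uniformly bounded in the $\cC^k$-topology independently of $t = A_p^{-1/2} \in (0,1)$. This is exactly the property that makes the rescaling $\delta_t$ work in this sequence-of-bundles setting, analogously to how $\frac{1}{p} R^{L^p}$ is automatically bounded in the classical case.

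The plan is to exploit the explicit form of $\mathscr{L}^t_2$ given in \eqref{2.29}. First I would write $\mathscr{L}^t_2 = \mathcal{A}^t + \mathcal{B}^t + \mathcal{C}^t$, where $\mathcal{A}^t = g^{ij}(tZ)\nabla_{t,e_i}\nabla_{t,e_j}$ is the principal part, $\mathcal{B}^t = -t\,g^{ij}(tZ)\Gamma^k_{ij}(tZ)\nabla_{t,e_k}$ is a first-order perturbation, and $\mathcal{C}^t$ collects the zeroth-order terms ($-t^2 R^{L_{p,0}}_{tZ}(\tilde w_j,\bar{\tilde w}_j)$ together with the curvature-scalar terms). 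The cutoff $\rho(|Z|/\varepsilon_0)$ in the definition of $g^{TX_0}$ ensures that $g^{ij}(tZ)$ is uniformly equivalent to $\delta^{ij}$ on all of $\R^{2n}$, and that its derivatives are uniformly bounded in $t \in (0,1)$. Similarly $t\Gamma^k_{ij}(tZ)$ is $O(t)$ uniformly, by the $\cC^1$-boundedness of $g^{TX_0}$ away from the origin and its Taylor behavior near the origin.

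For the lower bound, I would integrate by parts in $\langle \mathcal{A}^t s,s\rangle_{t,0}$, producing $\sum_j g^{jj}(tZ) \|\nabla_{t,e_j}s\|_{t,0}^2$ modulo commutator terms of the form $(\partial g^{ij})(tZ)\cdot t$ applied to $\nabla_{t,\bullet}s \cdot s$, which by Cauchy-Schwarz with weight $\eta$ can be absorbed into $\eta \|s\|_{t,1}^2 + C_\eta \|s\|_{t,0}^2$. The uniform ellipticity $g^{ij}(tZ)\xi_i\xi_j \geq c|\xi|^2$ then yields a clean $C_1 \|s\|_{t,1}^2$. The first-order term $\mathcal{B}^t$ is controlled by $\|\mathcal{B}^t s\|_{t,0} \leq C t \|s\|_{t,1}$, again absorbed. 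For $\mathcal{C}^t$, the crucial observation is that by \eqref{2.33} we have $|t^2 R^{L_p}|_{\cC^0} \leq C$ independently of $p$, and the scalar/Clifford terms are multiplied by $t^2$ and hence bounded; consequently $|\langle\mathcal{C}^t s,s\rangle_{t,0}| \leq C \|s\|_{t,0}^2$. Combining these yields \eqref{3.7} with $C_2$ large enough to absorb the error terms.

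For the upper bound, I would distribute one derivative from $\mathcal{A}^t$ onto $s'$ via integration by parts, so that
\begin{equation*}
\bigl|\langle \mathcal{A}^t s, s'\rangle_{t,0}\bigr| \leq \sum_{i,j} \int |g^{ij}(tZ)|\,|\nabla_{t,e_j}s|\,|\nabla_{t,e_i}s'|\, dv_{TX} + \text{(error)},
\end{equation*}
and then apply Cauchy-Schwarz to obtain $C\|s\|_{t,1}\|s'\|_{t,1}$. The term $\mathcal{B}^t$ gives $|\langle \mathcal{B}^t s,s'\rangle_{t,0}| \leq Ct \|s\|_{t,1}\|s'\|_{t,0}$ and $\mathcal{C}^t$ gives a contribution bounded by $C\|s\|_{t,0}\|s'\|_{t,0}$, both of which are $\leq C_3\|s\|_{t,1}\|s'\|_{t,1}$.

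The main obstacle is ensuring that every constant is uniform in $t$, i.e.\ in $p$ and in the base point $x_0 \in \tfrac{1}{2}V$. This is precisely where hypothesis \eqref{0.6} (through \eqref{0.8} and its consequence \eqref{2.33}) is essential: without the $\cC^k$-control on $A_p^{-1}c_1(L_p,h_p)$, the zeroth-order term $t^2 R^{L_p}_{tZ}$ could blow up with $p$ and no Gårding inequality would hold uniformly. The holomorphic gauge constructed in Lemma \ref{L:rep}, with the Taylor cutoff \eqref{2.20} defining the extension $\phi_{\varepsilon,p}$, ensures that the connection form $\Gamma^{L_{p,0}}$ is also controlled uniformly at all scales, closing the argument.
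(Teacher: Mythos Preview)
Your proposal is correct and identifies the essential ingredient, namely that \eqref{2.33} gives uniform $\cC^0$-control on $t^2 R^{L_{p,0}}$, which is precisely what replaces the automatic boundedness of $\tfrac{1}{p}R^{L^p}$ in the classical situation. The paper proceeds along the same lines but organizes the lower bound more cleanly: rather than decomposing $\mathscr{L}^t_2=\mathcal{A}^t+\mathcal{B}^t+\mathcal{C}^t$ and integrating by parts by hand (which forces you to absorb commutator terms from $\partial g^{ij}$ and $t\Gamma^k_{ij}$), it invokes the Lichnerowicz formula \eqref{2.14a} and the conjugation by $\kappa^{1/2}$ in \eqref{2.23} to obtain the exact identity
\[
\langle \mathscr{L}^t_2 s, s\rangle_{t,0}
= \|\nabla_t s\|^2_{t,0}
- t^2\bigl\langle \delta_t^{-1}\bigl(R^{L_{p,0}}(\tilde w_j,\bar{\tilde w}_j)
+\text{bounded geometric terms}\bigr)s,\,s\bigr\rangle,
\]
the analogue of \cite[(4.1.39)]{MM07}; then \eqref{2.33} bounds the second term by $C\|s\|_{t,0}^2$ and the first inequality of \eqref{3.7} is immediate. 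For the upper bound both arguments coincide: from the explicit expression \eqref{2.29} one distributes a derivative onto $s'$ and applies Cauchy--Schwarz. Your route works, but the Bochner identity saves you the bookkeeping of the $\eta$-absorption step.
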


\begin{proof} The analogue of \cite[(4.1.39)]{MM07} holds:
\begin{equation}\begin{split}\label{3.8}
\big\langle \mathscr{L}^{t}_{2}s, s\big\rangle_{t, 0}
=
\big\|\nabla_{t}s\big\|^{2}_{t, 0}
-t^{2}\Big\langle \delta^{-1}_{t}
\big( & R^{L_{p, 0}}(\tilde{w}_{j}, \ov{\tilde{w}}_{j})
-\frac{r^{X_{0}}}{4}
+\frac{1}{2}\text{Tr}\big[R^{T^{(1, 0)}X_{0}}\big]
(\tilde{w}_{j}, \ov{\tilde{w}}_{j})
\\ \ \ \  & +\frac{1}{4}\ ^{c}(dT_{0, as})
+\frac{1}{8}|T_{0, as}|^{2}\big)s, s\Big\rangle.
\end{split} \end{equation}
By (\ref{2.33}) and (\ref{3.8}), we obtain the first inequality 
of (\ref{3.7}).
From (\ref{2.29}), we get the second inequality of (\ref{3.7}).
\end{proof}

\begin{proof}[Proof of Theorem \ref{t1}]
From Theorem \ref{t3.2}, we can obtain the analogue of 
\cite[Theorems 4.1.10\,--\,4.1.12]{MM07}
exactly the same way as \cite[Theorems 4.1.10\,--\,4.1.12]{MM07}
follow from \cite[Theorems 4.1.9]{MM07}.
Recall that $a_{p}/A_{p}\geqslant \mu_{0}>0$ for all $p\in \N^{\ast}$ large enough.
By (\ref{2.21}) and (\ref{2.23}), there exists $t_{0}\in (0, 1)$
such that for $t\in (0, t_{0})$,
\begin{equation}
{\rm Spec}(\mathscr{L}^{t}_{2})
\subset \{0\}\cup\Big[\,\frac{1}{2}\,\mu_{0}, +\infty\Big).
\end{equation}
Let $\delta$ be the counterclockwise oriented circle in 
$\C$ of center $0$ and radius $\mu_{0}/4$.
Then $(\lambda-\mathscr{L}^{t}_{2})^{-1}$ exists for 
$\lambda\in \delta$.

For $m\in \N$, let $\mathcal{Q}^{m}$ be the set of
operators 
$\{\nabla_{t, e_{i_{1}}}\ldots\nabla_{t, e_{i_{j}}}\}_{j\leqslant m}$.
For $k, r\in \N$, set
\begin{equation}
I_{k, r}:=\Big\{({\bf k}, {\bf r})=\big\{(k_{i}, r_{i})\big\}^{j}_{i=0},\ 
\sum^{j}_{i=0}k_{i}=k+j,\
\sum^{j}_{i=1}r_{i}=r,\ k_{i}, r_{i}\in \N^{\ast} \Big\}.
\end{equation}
Then there exist $a^{\bf k}_{\bf r}\in \R$ such that
\begin{eqnarray}
A^{\bf k}_{\bf r}(\lambda, t)&=&
(\lambda-\mathscr{L}^{t}_{2})^{-k_{0}}\frac{\partial^{r_{1}} 
\mathscr{L}^{t}_{2}}{\partial t^{r_{1}}}
(\lambda-\mathscr{L}^{t}_{2})^{-k_{1}}
\ldots
\frac{\partial^{r_{j}} \mathscr{L}^{t}_{2}}{\partial t^{r_{j}}}
(\lambda-\mathscr{L}^{t}_{2})^{-k_{j}},
 \\
\frac{\partial^{r}}{\partial t^{r}}(\lambda-\mathscr{L}^{t}_{2})^{-k}
&=&\sum_{({\bf k}, {\bf r})\in I_{k, r}} 
a^{\bf k}_{\bf r}A^{\bf k}_{\bf r}(\lambda, t).
\nonumber \end{eqnarray}
The analogue of \cite[Theorems 4.1.13\,--\,4.1.14]{MM07} is as follows. 
For any $m\in \N$,
$k>2(m+r+1), ({\bf k}, {\bf r})\in I_{k, r}$, 
there exist $C>0$, $N\in \N$ such that for any
$\lambda\in \delta$,
$t\in (0, t_{0}]$, $Q, Q'\in \mathcal{Q}^{m}$,
\begin{equation}
\big\|Q A^{\bf k}_{\bf r}(\lambda, t)Q's\big\|_{t, 0}\leqslant
C \big(1+|\lambda|\big)^{N}
\sum_{|\beta|\leqslant 2r}\|Z^{\beta}s\|_{t, 0}.
\end{equation}
Moreover, if $\frac{1}{2}<a_{1}\leqslant 1$, then for
$r\in \{0, 1, \ldots, 2a_{0}+1\}$
and any $k>0$, there exist $C>0$, $N\in \N$ such that for 
$t\in [0, t_{0}]$,
$\lambda\in \delta$, we have
\begin{eqnarray}\label{3.8a}
& &\Big\|\frac{\partial^{r} \mathscr{L}^{t}_{2}}{\partial t^{r}}
-\frac{\partial^{r} \mathscr{L}^{t}_{2}}{\partial t^{r}}\Big|_{t=0}
\Big\|_{t, -1}
\leqslant C t\sum_{|\alpha|\leqslant r+3}\big\|Z^{\alpha}s\big\|_{0, 1}.
\nonumber \\
& & \Big\|\frac{\partial^{r} }{\partial t^{r}}
(\lambda-\mathscr{L}^{t}_{2})^{-k}
-\sum_{({\bf k}, {\bf r})\in I_{k, r}}a^{\bf k}_{\bf r}
A_{\bf r}^{\bf k}(\lambda, 0)\Big\|_{0, 1}
 \\ & &  \hspace{2cm}
\leqslant C t(1+|\lambda|^{2})^{N}
\sum_{|\alpha|\leqslant 4r+3}\|Z^{\alpha}s\|_{0, 0}.
\nonumber
\end{eqnarray}

\noindent
If $0<a_{1}\leqslant \frac{1}{2}$, then for 
$r\in \{0, 1, \ldots, 2a_{0}-1\}$ we have similar estimates
to \eqref{3.8a} and for $r=2a_{0}$ and $k>0$,
there exist $C>0$, $N\in\N$, such that for $t\in [0, t_{0}]$,
$\lambda\in \delta$, we have
\begin{eqnarray}
& & \Big\|\frac{\partial^{r} \mathscr{L}^{t}_{2}}{\partial t^{r}}
-\frac{\partial^{r} \mathscr{L}^{t}_{2}}{\partial t^{r}}
\Big|_{t=0}\Big\|_{t, -1}
\leqslant C t^{2a_{1}}\sum_{|\alpha|\leqslant r+3}
\big\|Z^{\alpha}s\big\|_{0, 1},
\nonumber \\
& & \Big\|\frac{\partial^{r} }{\partial t^{r}}
(\lambda-\mathscr{L}^{t}_{2})^{-k}
-\sum_{({\bf k}, {\bf r})\in I_{k, r}}a^{\bf k}_{\bf r}
A_{\bf r}^{\bf k}(\lambda, 0)\Big\|_{0, 1}
 \\ & & \hspace{2cm}
\leqslant C t^{2a_{1}} (1+|\lambda|^{2})^{N}
\sum_{|\alpha|\leqslant 4r+3}\|Z^{\alpha}s\|_{0, 0}.
\nonumber \end{eqnarray}

By \cite[Theorems 4.1.16\,--\,4.1.18 and 4.1.21]{MM07}, we obtain the
following analogue of \cite[Theorem 4.1.24]{MM07}: for any 
$m\in \N$, $q>0$,
there exists $C>0$ such that for $Z, Z'\in T_{x_{0}}X$, 
$|Z|, |Z'|\leqslant q/\sqrt{A_{p}}$,
\begin{eqnarray}\label{3.13}
& & \Big|\frac{1}{A^{n}_{p}}P_{p}(Z, Z')
-\sum^{2a_{0}+1}_{r=0}\mathscr{F}_{r}(\sqrt{A_{p}}Z, \sqrt{A_{p}}Z')
\kappa^{-\frac{1}{2}}(Z)
\kappa^{-\frac{1}{2}}(Z')A_{p}^{-\frac{r}{2}}\Big|_{C^{m}(X)}
 \\ & & \hspace{4cm}
\leqslant C A^{-a_{0}-1}_{p},\ \ {\rm for}\ \ \frac{1}{2}< a_{1}
\leqslant 1;
\nonumber \\
& & \Big|\frac{1}{A^{n}_{p}}P_{p}(Z, Z')
-\sum^{2a_{0}}_{r=0}\mathscr{F}_{r}(\sqrt{A_{p}}Z, \sqrt{A_{p}}Z')
\kappa^{-\frac{1}{2}}(Z)
\kappa^{-\frac{1}{2}}(Z')A_{p}^{-\frac{r}{2}}\Big|_{C^{m}(X)}
\nonumber \\ & & \hspace{4cm}
\leqslant C A^{-a_{0}-\frac{1}{2}}_{p},\ \ {\rm for}\ \  0<a_{1}
\leqslant \frac{1}{2};
\nonumber
\end{eqnarray}
where $\mathscr{F}_{r}(Z, Z')=J_{r}(Z, Z')\mathscr{P}(Z, Z')$ and
$J_{r}(Z, Z')$ are polynomials in $Z$, $Z'$ with the same parity as $r$
and $J_{0}(Z, Z')=1$. Set now $Z=Z'=0$ in (\ref{3.13}). 
Then (\ref{0.7a}) follows
from (\ref{2.45}) and (\ref{3.13}). The proof of Theorem \ref{t1}
is completed.
\end{proof}


\section{Equidistribution of zeros of random sections}\label{S:equidist}

In this section we prove Theorem \ref{T:equidist}. 
Assume throughout this section the setting of Theorem \ref{T:equidist} 
and let $m\in\{1,\ldots,n\}$.  We will denote by $\omega_{_\FS}$ 
the Fubini-Study form on a projective space $\P^d$, 
normalized so that $\omega_{_{\FS}}^d$ is a probability measure.

Let us start by introducing notation and recalling some facts 
needed for the proof. If $\{S^p_j\}_{j=1}^{d_p}$ is an 
orthonormal basis of $H^0(X,L_p)$ then the Bergman kernel function 
$P_p$ of $H^0(X,L_p)$ is given by 
\begin{equation}\label{e:Bk}
P_p(x)=\sum_{j=1}^{d_p}|S^p_j(x)|_{h_p}^2\,,\,\;x\in X.
\end{equation}

Let $U$ be a contractible Stein open set in $X$ and write 
$S^p_j=f^p_je_p$, where $e_p$ is a local holomorphic frame 
of $L_p$ and $f^p_j$ is a holomorphic function on $U$. 
The \emph{Fubini-Study current} $\gamma_p$ of $H^0(X,L_p)$
is defined by 
\begin{equation}\label{e:FSdef}
\gamma_p\,|_U=\frac{1}{2}\,dd^c\log\sum_{j=1}^{d_p}|f^p_j|^2\,,  
\end{equation}
where $d=\partial+\overline\partial$, $d^c=
\frac{1}{2\pi i}(\partial -\overline\partial)$. 
These are positive closed currents of bidegree $(1,1)$, 
smooth away from the base locus $\Bs H^0(X,L_p)$ of $H^0(X,L_p)$. 
We have 
\begin{equation}\label{e:FSB}
\gamma_p=c_1(L_p,h_p)+\frac{1}{2}\,dd^c\log P_p\,.
\end{equation}
Let $\Phi_p:X\dashrightarrow\P^{d_p-1}$ be the Kodaira map defined 
by the basis $\{S^p_j\}_{j=1}^{d_p}$, so
\begin{equation}\label{e:Kodaira}
\Phi_p(x)=[f^p_1(x):\ldots:f^p_{d_p}(x)]\,\text{ for }x\in U.
\end{equation}
Then $\gamma_p=\Phi_p^*(\omega_{_{\FS}})$.

If $s\in H^0(X,L_p)$ we denote by $[s=0]$ the current of integration
(with multiplicities) along the analytic hypersurface $\{s=0\}$. 
One has the Lelong-Poincar\'e formula (see \cite[Theorem 2.3.3]{MM07})
\begin{equation}\label{e:LP}
[s=0]=c_1(L_p,h_p)+dd^c\log|s|_{h_p}\,.
\end{equation}
Recall that ${\mathbb X}_{p,m}= \big(\P H^0(X,L_p)\big)^m$, 
$d_p=\dim H^0(X,L_p)$. Set 
\begin{equation}\label{e:d_pm}
d_{p,m}:=\dim{\mathbb X}_{p,m}=m(d_p-1).
\end{equation}
Let $\pi_k:{\mathbb X}_{p,m}\to\P H^0(X,L_p)$ 
be the canonical projection onto the $k$-th factor. 
We endow ${\mathbb X}_{p,m}$ 
with the K\"ahler form 
\[\omega_{p,m}:= c_{p,m}\big(\pi_1^*\omega_{_{\FS}}+
\ldots+\pi_m^*\omega_{_{\FS}}\big),\]
where the constant $c_{p,m}$ is chosen so that 
$\omega_{p,m}^{d_{p,m}}=\sigma_{p,m}$ is a probability measure on 
${\mathbb X}_{p,m}$. It follows that  
\begin{equation}\label{e:c_p}
c_{p,m}= \left(\frac{\big((d_p-1)!\big)^m}{d_{p,m}!}\right)^{1/d_{p,m}}.
\end{equation}

\begin{lemma}\label{L:Bertini} 
In the hypotheses of Theorem \ref{T:equidist}, the following hold
for $p>p_0$: 

(i) $\gamma_p$ are smooth $(1,1)$ forms on $X$.

(ii) For $\sigma_{p,m}$-a.e.\ 
$\bfs_p=(s_{p1},\ldots,s_{pm})\in {\mathbb X}_{p,m}$ 
we have that the analytic set 
$\{s_{pi_1}=0\}\cap\ldots\cap\{s_{pi_k}=0\}$ 
has pure dimension $n-k$ for each $1\leqslant k\leqslant m$ and 
$1\leqslant i_1<\ldots<i_k\leqslant m$. In particular the current 
$[\bfs_p=0]:=[s_{p1}=0]\wedge\ldots\wedge[s_{pm}=0]$ 
is well defined and is equal to the current 
of integration with multiplicities over the common zero set 
$\{\bfs_p=0\}:=\{s_{p1}=0\}\cap\ldots\cap\{s_{pm}=0\}$.
\end{lemma}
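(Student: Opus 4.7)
The plan is to deduce (i) directly from the Bergman-kernel lower bound \eqref{e:Bkf} and to establish (ii) by a Bertini-type measure-theoretic argument combined with Fubini's theorem on the product space ${\mathbb X}_{p,m}$.

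For (i) the key observation is that the lower bound in \eqref{e:Bkf}, together with the identity \eqref{e:Bk} expressing $P_p$ pointwise as $\sum_j|S_j^p|_{h_p}^2$, forces the base locus $\Bs H^0(X,L_p)$ to be empty for $p>p_0$. Once this is noted, the Kodaira map $\Phi_p\colon X\to\P^{d_p-1}$ defined by \eqref{e:Kodaira} is holomorphic on all of $X$, and consequently $\gamma_p=\Phi_p^*(\omega_{_\FS})$ is a smooth $(1,1)$-form on $X$.

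For (ii) I would argue by induction on $k\in\{1,\ldots,m\}$, starting from $k=1$, where \eqref{e:LP} shows that any nonzero section has zero set of pure dimension $n-1$. For the inductive step assume that, outside a $\sigma_{p,m}$-null set, the intersection $V:=\{s_{p1}=0\}\cap\ldots\cap\{s_{p,k-1}=0\}$ has pure dimension $n-k+1$, and let $V_1,\ldots,V_N$ be its irreducible components (finitely many, since $X$ is compact). For each $\alpha$ the set $E_\alpha:=\{s\in\P H^0(X,L_p):s|_{V_\alpha}\equiv 0\}$ is a projective linear subspace, and it is \emph{proper} because the empty base locus from (i) provides, for any $x\in V_\alpha$, a section $s$ with $s(x)\neq 0$ and hence $s|_{V_\alpha}\not\equiv 0$. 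Thus $\sigma_{p,1}(E_\alpha)=0$ for every $\alpha$, so $\bigcup_\alpha E_\alpha$ is also null; Fubini on ${\mathbb X}_{p,m}$ then shows that outside a $\sigma_{p,m}$-null set $s_{pk}$ does not vanish identically on any $V_\alpha$, forcing $V\cap\{s_{pk}=0\}$ to have pure dimension $n-k$. A further finite union over the index subsets $\{i_1,\ldots,i_k\}\subset\{1,\ldots,m\}$ then yields the full assertion.

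Once the intersection is complete of the expected codimension, the current $[s_{p1}=0]\wedge\ldots\wedge[s_{pm}=0]$ is well defined and equals the integration current (with multiplicities) on $\{\bfs_p=0\}$ by classical pluripotential-theoretic intersection theory (the King-type formula for positive closed $(1,1)$-currents with plurisubharmonic potentials; cf.\ Demailly). The main subtlety in the plan lies in the inductive step of (ii), namely in invoking the empty base locus from (i) to guarantee that each exceptional $E_\alpha$ is a \emph{proper} projective linear subspace and hence null; everything else reduces to standard applications of Fubini and the Lelong-Poincar\'e formula.
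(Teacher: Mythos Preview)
Your argument for (i) matches the paper's: the lower bound in \eqref{e:Bkf} gives $P_p>0$, hence $\Bs H^0(X,L_p)=\emptyset$ and $\gamma_p$ is smooth (the paper concludes via \eqref{e:FSB}, you via the holomorphy of the Kodaira map --- equivalent).

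For (ii) your inductive Bertini argument is essentially a self-contained proof of what the paper simply quotes as \cite[Proposition 4.1]{CM15}, namely that generic $m$-tuples are in general position; and your appeal to Demailly's intersection theory at the end corresponds to the paper's citation of \cite[Corollary 2.11, Proposition 2.12]{Dem93}. So the overall route is the same, only you unpack the cited Bertini step.

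There is, however, one point you pass over and the paper addresses explicitly. Your induction shows that if $s_{pk}$ does not vanish identically on any component $V_\alpha$ of $V$, then every irreducible component of $V\cap\{s_{pk}=0\}$ has dimension $n-k$; but it does not show that this intersection is \emph{nonempty}. Nothing in your argument excludes $V_\alpha\cap\{s_{pk}=0\}=\emptyset$, and the issue is already present at $k=1$ (a nowhere-vanishing section would have empty zero set). The paper fills this gap with a cohomological mass computation: once the intersection is known to have dimension $\leqslant n-k$, the current $R=[s_{pi_1}=0]\wedge\ldots\wedge[s_{pi_k}=0]$ is well defined by \cite[Corollary 2.11]{Dem93}, and by the Lelong--Poincar\'e formula
\[
\int_X R\wedge\vartheta^{n-k}=\int_X c_1(L_p,h_p)^k\wedge\vartheta^{n-k}>0,
\]
the positivity coming from \eqref{e:T} for $p$ large. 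Hence $\supp R\neq\emptyset$, and the intersection has pure dimension exactly $n-k$. Your argument becomes complete once you insert this step (or an equivalent observation that $L_p|_{V_\alpha}$ cannot be holomorphically trivial because $c_1(L_p,h_p)>0$).
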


\begin{proof} By \eqref{e:Bkf} we have $P_p(x)>0$ for all 
$x\in X$ and $p>p_0$, hence $\Bs H^0(X,L_p)=\emptyset$ 
and $(i)$ follows from \eqref{e:FSB}. Since 
$\Bs H^0(X,L_p)=\emptyset$ for $p>p_0$, \cite[Proposition 4.1]{CM15} 
implies that, for $\sigma_{p,m}$-a.e.\ 
$\bfs_p=(s_{p1},\ldots,s_{pm})\in{\mathbb X}_{p,m}$, 
the analytic hypersurfaces $\{s_{p1}=0\},\ldots,\{s_{pm}=0\}$ 
are in general position, i.e.\ 
$\{s_{pi_1}=0\}\cap\ldots\cap\{s_{pi_k}=0\}$ 
has dimension at most $n-k$ for each $1\leqslant k\leqslant m$ and 
$1\leqslant i_1<\ldots<i_k\leqslant m$. Hence 
\begin{align}\label{ma3.7}
R:=[s_{pi_1}=0]\wedge\ldots\wedge[s_{pi_k}=0]
\end{align}
is a well defined positive closed current of bidegree $(k,k)$ 
by \cite[Corollary 2.11]{Dem93}, supported in the set 
$\{s_{pi_1}=0\}\cap\ldots\cap\{s_{pi_k}=0\}$. 
Moreover, by the Lelong-Poincar\'e formula \eqref{e:LP},
\[\int_XR\wedge\vartheta^{n-k}
=\int_Xc_1(L_p,h_p)^k\wedge\vartheta^{n-k}>0\,.\]
So $\{s_{pi_1}=0\}\cap\ldots\cap\{s_{pi_k}=0\}\neq\emptyset$, 
hence it has pure dimension $n-k$. The last assertion of $(ii)$ 
now follows from \cite[Corollary 2.11, Proposition 2.12]{Dem93}.
\end{proof}


The proof of Theorem \ref{T:equidist} uses results of Dinh 
and Sibony \cite[Section 3.1]{DS06} on meromorphic transforms. 
As in \cite[Example 3.6 (c)]{DS06}, \cite[Section 4.2]{CMN16}, \cite{DMM} 
we consider the meromorphic transform
$\Phi_{p,1}$
from $X$ to ${\mathbb P}H^0(X,L_p)$ defined by its graph 
$\Gamma_{p,1}=\big\{(x,s)\in X\times{\mathbb P}H^0(X,L_p):\,s(x)=0\big\}$.
This is related to the Kodaira map $\Phi_p$ 
from \eqref{e:Kodaira}.  
Its $m$-fold product $\Phi_{p,m}$ (see \cite[Section 3.3]{DS06}) 
is the meromorphic transform from $X$ to ${\mathbb X}_{p,m}$ with graph
\[\Gamma_{p,m}=\big\{(x,s_{p1},\ldots,
s_{pm})\in X\times {\mathbb X}_{p,m}:\  s_{p1}(x)
=\ldots=s_{pm}(x)=0\big\}.
\]
Using Lemma \ref{L:Bertini} $(ii)$ and arguing as in 
\cite[Section 4.2]{CMN16}, it follows that 
$\Phi_{p,m}$ 
is a meromorphic transform of codimension $n-m$, with fibers 
\[\Phi^{-1}_{p,m}(\bfs_p)=\{x\in X:\ s_{p1}(x)=\ldots=s_{pm}(x)=0\}\,,
\,\text{ where }\,\bfs_p=(s_{p1},\ldots,s_{pm})\in{\mathbb X}_{p,m}\,.\]
Moreover, for $\bfs_p\in{\mathbb X}_{p,m}$ generic, the current 
\[\Phi_{p,m}^*(\delta_{\bfs_p})=
[\bfs_p=0]=[s_{p1}=0]\wedge\ldots\wedge[s_{pm}=0]=
\Phi_{p,1}^*(\delta_{s_{p1}})\wedge\ldots
\wedge\Phi_{p,1}^*(\delta_{s_{pm}})\]
is a well defined positive closed current of bidegree $(m,m)$ on $X$. 
Here $\delta_x$ denotes the Dirac mass at a point $x$, and $F^*(T)$ 
denotes the pull-back of a current $T$ by a meromorphic 
transform $F$ as defined in \cite[Section 3.1]{DS06}.
Following the proof of \cite[Theorem 1.2]{CM15} 
(see also \cite[Lemma 4.5]{CMN16}), we can show that 
\[\Phi_{p,m}^*(\sigma_{p,m})=\gamma_p^m\,,\,
\text{ for all $p>p_0$.}\]
We consider  the intermediate degrees of $\Phi_{p,m}$ 
of order $d_{p,m}$, resp.\ $d_{p,m}-1$ \cite[Section 3.1]{DS06}:
\begin{equation}\label{e:degree1}
\delta^1_{p,m}:=\int_X \Phi_{p,m}^*(\omega_{p,m}^{d_{p,m}})\wedge 
\omega^{n-m}\,,\,\;\delta^2_{p,m}:=
\int_X \Phi_{p,m}^*(\omega_{p,m}^{d_{p,m}-1})\wedge
\vartheta^{n-m+1}.
\end{equation}
As in the proof of \cite[Lemma 4.4]{CMN16} we obtain that 
\begin{equation}\label{e:degree2}
\delta^1_{p,m}=\int_X c_1(L_p,h_p)^m\wedge 
\vartheta^{n-m}\,,\,\;\delta^2_{p,m}=
\frac{1}{c_{p,m}}\,\int_X c_1(L_p,h_p)^{m-1}\wedge \vartheta^{n-m+1}.
\end{equation}
We will need the following estimates:
\begin{lemma}\label{L:est}

(i) For every $p\geqslant1$ and $m\in\{1,\ldots,n\}$, 
we have $\frac{1}{2em}<c_{p,m}<\frac{2e}{m}\,$.

(ii) There exist constants $M_1>1$ and $p_1>p_0$ such that, 
for every $p>p_1$, we have
\begin{align}
& M_1^{-1}A_p^n\leqslant d_p\leqslant M_1A_p^n\,,\label{e:dp}\\
M_1^{-1}A_p^m\leqslant\delta^1_{p,m}\leqslant 
M_1&A_p^m\,,\,\;M_1^{-1}A_p\leqslant
\frac{\delta^1_{p,m}}{\delta^2_{p,m}}\leqslant 
M_1A_p\,,\,\;\forall\,m\in\{1,\ldots,n\}\,.\label{e:delta}
\end{align}
\end{lemma}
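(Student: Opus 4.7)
The proof splits naturally into two essentially independent parts, and I would organize it as follows.

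Part (i) is a purely combinatorial Stirling estimate. Writing $k:=d_p-1$ so that $d_{p,m}=mk$, formula \eqref{e:c_p} becomes $c_{p,m}^{mk}=(k!)^m/(mk)!$. I would apply two-sided Stirling bounds such as $\sqrt{2\pi n}(n/e)^n\leqslant n!\leqslant e\sqrt{n}(n/e)^n$, so that
\begin{equation*}
\frac{(k!)^m}{(mk)!}=\frac{1}{m^{mk}}\,R_m(k),
\end{equation*}
where $R_m(k)$ is a ratio bounded above and below by explicit algebraic expressions in $k$ and $m$. Extracting the $mk$-th root kills the polynomial growth and pushes $c_{p,m}$ to within an arbitrarily small multiplicative factor of $1/m$, giving $c_{p,m}=\frac{1}{m}R_m(k)^{1/(mk)}$ with $R_m(k)^{1/(mk)}\in(1/(2e),2e)$ uniformly in $k\geqslant 1$ and $m\geqslant 1$. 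The only delicate point is the smallest value $k=1$, which I would handle separately using the classical inequalities $m/e\leqslant(m!)^{1/m}\leqslant m$.

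Part (ii) rests on unpacking the hypothesis \eqref{e:T}. Write $c_1(L_p,h_p)=A_p\omega+E_p$ with $|E_p|_{\cC^0}\leqslant C_0A_p^{1-a}$. The bound on $d_p$ follows from the trace identity $d_p=\int_X P_p\,dv_X$ and \eqref{e:Bkf}, which yield $M_0^{-1}\vol(X)A_p^n\leqslant d_p\leqslant M_0\vol(X)A_p^n$ for $p>p_0$. For the first intermediate degree, I expand
\begin{equation*}
c_1(L_p,h_p)^m=A_p^m\omega^m+\sum_{j=1}^m\binom{m}{j}A_p^{m-j}\omega^{m-j}\wedge E_p^{j}
\end{equation*}
and integrate against $\vartheta^{n-m}$; combining with \eqref{e:degree2} gives $\delta^1_{p,m}=A_p^m\int_X\omega^m\wedge\vartheta^{n-m}+O(A_p^{m-a})$. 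Since $\omega$ and $\vartheta$ are both K\"ahler forms on $X$, the constant $\int_X\omega^m\wedge\vartheta^{n-m}$ is strictly positive, so for $p$ large the stated two-sided bound $M_1^{-1}A_p^m\leqslant\delta^1_{p,m}\leqslant M_1A_p^m$ holds.

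The same expansion applied to $c_1(L_p,h_p)^{m-1}$ together with \eqref{e:degree2} gives
\begin{equation*}
\delta^2_{p,m}=\frac{A_p^{m-1}}{c_{p,m}}\Big(\int_X\omega^{m-1}\wedge\vartheta^{n-m+1}+O(A_p^{-a})\Big),
\end{equation*}
so the ratio $\delta^1_{p,m}/\delta^2_{p,m}$ equals $c_{p,m}A_p$ times a quantity converging to the strictly positive number $\int_X\omega^m\wedge\vartheta^{n-m}/\int_X\omega^{m-1}\wedge\vartheta^{n-m+1}$. Since $m$ ranges in the finite set $\{1,\ldots,n\}$, part (i) bounds $c_{p,m}$ between positive constants depending only on $n$, and absorbing everything into a single $M_1$ completes the proof.

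I expect the main (mild) obstacle to be the uniformity of the Stirling bounds in part (i): the target constants $1/(2em)$ and $2e/m$ are generous for large $k$ but need to be checked carefully at small $k$ and uniformly in $m$. Part (ii) is then a routine consequence of the $\cC^0$ approximation \eqref{e:T}, the positivity of $\omega$, and the estimates just established in part (i).
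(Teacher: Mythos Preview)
Your proposal is correct and follows the same overall structure as the paper's proof; the differences are only in execution. For~(i), the paper observes the clean factorization
\[
c_{p,m}=\frac{\big((d_p-1)!\big)^{1/(d_p-1)}}{\big(d_{p,m}!\big)^{1/d_{p,m}}}
\]
and then applies the elementary bound $k/e<(k!)^{1/k}<2k$ (valid for all $k\geqslant1$, obtained from the same Stirling estimates you cite together with $k^{1/(2k)}<2$) to numerator and denominator separately; this gives the result in one line and removes any need to treat small $k$ as a special case. For~(ii), instead of your binomial expansion the paper uses the pointwise form inequality $\tfrac{1}{2}\omega\leqslant A_p^{-1}c_1(L_p,h_p)\leqslant 2\omega$ for $p$ large (an immediate consequence of~\eqref{e:T}) to sandwich $\delta^1_{p,m}$ between $2^{-m}A_p^m\int_X\omega^m\wedge\vartheta^{n-m}$ and $2^{m}A_p^m\int_X\omega^m\wedge\vartheta^{n-m}$. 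Your expansion with the $O(A_p^{m-a})$ remainder is equally valid and in fact yields slightly sharper asymptotics; the paper's route is just a bit quicker for the two-sided bounds actually needed. The treatment of $d_p$ via \eqref{e:Bkf} and of the ratio $\delta^1_{p,m}/\delta^2_{p,m}$ via part~(i) is identical in both.
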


\begin{proof} $(i)$ We have that \cite[p.\ 200]{Rudin76}
\[e^{^{\frac{7}{8}}}<\frac{k!}{\left(\frac{k}{e}\right)^k\sqrt{k}}\leqslant 
e\,,\,\text{ for every $k\geqslant1$.}\] 
Since $k^{^{\frac{1}{2k}}}<2$ this implies that 
$\frac{k}{e}<\big(k!\big)^{^{\frac{1}{k}}}<2k$. 
Hence by \eqref{e:c_p} and \eqref{e:d_pm},
\[\frac{1}{2em}<c_{p,m}= 
\frac{\big((d_p-1)!\big)^{1/(d_p-1)}}{\big(d_{p,m}!\big)^{1/d_{p,m}}}<
\frac{2e}{m}\,.\]
$(ii)$ We infer from \eqref{e:T} that there exists $p_1\in\N$ such that 
\begin{equation}\label{e:T1}
\frac{\omega}{2}\leqslant\frac{1}{A_p}c_1(L_p,h_p)
\leqslant 2\omega\,,\, \text{ for all $p>p_1$.}
\end{equation}
By \eqref{e:degree2} we obtain 
\[2^{-m}A_p^m\int_X\omega^m\wedge\vartheta^{n-m}
\leqslant\delta^1_{p,m}\leqslant2^mA_p^m
\int_X\omega^m\wedge\vartheta^{n-m},\]
which readily implies the first estimate from \eqref{e:delta}. 
Using this and part $(i)$, we obtain the estimate on 
$\delta^1_{p,m}/\delta^2_{p,m}$ from \eqref{e:delta}, 
by increasing the constant $M_1$. Finally, using \eqref{e:Bkf}, we get 
\[\frac{A_p^n}{M_0}\,\int_X\frac{\vartheta^n}{n!}\,
\leqslant d_p=\int_XP_p\,\frac{\vartheta^n}{n!}\leqslant 
M_0A_p^n\int_X\frac{\vartheta^n}{n!}\,,\,\text{ for all $p>p_0$.}
\]
\end{proof}

Our next result deals with the part of the proof of 
Theorem \ref{T:equidist} which uses the Dinh-Sibony 
meromorphic transform technique and equidistribution theorem 
\cite[Theorem 4.1, Lemma 4.2\,(d)]{DS06}. 
For $p>p_0$, $m\in\{1,\ldots,n\}$ and $\varepsilon>0$, let 
\begin{equation}\label{e:excset}
E_{p,m}(\varepsilon):=\bigcup_{\|\phi\|_{\cC^2}\leqslant 1}
\big\{ \bfs_p\in{\mathbb X}_{p,m}:\,\big|\big\langle[\bfs_p=0]
-\gamma_p^m,\phi\big\rangle\big|\geqslant A_p^m\varepsilon \big\},
\end{equation}
where $\phi$ is a $(n-m,n-m)$ form of class $\cC^2$ on $X$. 
We also assume that the set of $\bfs_p\in{\mathbb X}_{p,m}$ 
for which the current $[\bfs_p=0]$ is not well defined is contained in 
$E_{p,m}(\varepsilon)$. Note that, by Lemma \ref{L:Bertini}, 
the latter is a set of measure $0$ since $p>p_0$. 

\begin{prop}\label{P:MT1}
In the hypotheses of Theorem \ref{T:equidist}, there exist constants 
$\nu,\alpha,\zeta>0$ and $p_1>p_0$, such that for every $p>p_1$, 
$m\in\{1,\ldots,n\}$ and $\varepsilon>0$ we have 
\[\sigma_{p,m}(E_{p,m}(\varepsilon))\leqslant
\nu\,A_p^\zeta\,e^{-\alpha A_p\varepsilon}.\] 
\end{prop}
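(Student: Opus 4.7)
The plan is to apply the Dinh--Sibony equidistribution theorem for meromorphic transforms \cite[Theorem 4.1, Lemma 4.2(d)]{DS06} directly to the transform $\Phi_{p,m}:X\dashrightarrow \mathbb{X}_{p,m}$ of codimension $n-m$ introduced above the statement. For $p>p_0$, Lemma \ref{L:Bertini} ensures that $\Phi_{p,m}$ is a genuine meromorphic transform with $\Phi_{p,m}^*(\delta_{\bfs_p})=[\bfs_p=0]$ for $\sigma_{p,m}$-a.e.\ $\bfs_p$; moreover, as recorded in the text preceding the statement, $\Phi_{p,m}^*(\sigma_{p,m})=\gamma_p^m$, so $\gamma_p^m$ plays the role of the expected (averaged) pullback current against which we compare $[\bfs_p=0]$.

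The key input from \cite[Theorem 4.1]{DS06} (as adapted for meromorphic transforms into products, cf.\ the implementation in \cite[Section 4.2]{CMN16}) is that there exist absolute constants $\alpha_0,\nu_0,\zeta_0>0$ such that for every $p>p_0$ and every $\lambda>0$,
\begin{equation}\label{eq:DSplan}
\sigma_{p,m}\Bigl\{\bfs_p\in\mathbb{X}_{p,m}:\sup_{\|\phi\|_{\cC^2}\leqslant1}\bigl|\bigl\langle[\bfs_p=0]-\gamma_p^m,\phi\bigr\rangle\bigr|>\delta^1_{p,m}\lambda\Bigr\}\leqslant\nu_0\,d_{p,m}^{\zeta_0}\exp\!\Bigl(-\alpha_0\,\frac{\delta^1_{p,m}}{\delta^2_{p,m}}\,\lambda\Bigr),
\end{equation}
the supremum being over $(n-m,n-m)$ forms $\phi$ of class $\cC^2$ on $X$. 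The polynomial factor in $d_{p,m}=m(d_p-1)$ on the right absorbs both the discretization of the unit ball of such test forms by a polynomially-sized $\cC^2$-net (paying a union bound) and the Dinh--Sibony constants' dependence on $\dim\mathbb{X}_{p,m}$.

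With \eqref{eq:DSplan} in hand, the proof is a matter of rescaling. Set $\lambda:=A_p^m\varepsilon/\delta^1_{p,m}$. By Lemma \ref{L:est}(ii) we have $\delta^1_{p,m}/A_p^m\in[M_1^{-1},M_1]$ and $\delta^1_{p,m}/\delta^2_{p,m}\geqslant M_1^{-1}A_p$, so the left-hand side of \eqref{eq:DSplan} becomes $\sigma_{p,m}(E_{p,m}(\varepsilon))$ and the right-hand side is bounded above by $\nu\,A_p^\zeta\exp(-\alpha A_p\varepsilon)$ for some $\nu,\zeta,\alpha>0$ independent of $p$ and $\varepsilon$, using also \eqref{e:dp} to replace $d_{p,m}^{\zeta_0}=O(A_p^{n\zeta_0})$.

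The main obstacle is ensuring that $\alpha_0$ and $\nu_0$ in \eqref{eq:DSplan} can be chosen independent of $p$, despite the target $(\mathbb{X}_{p,m},\omega_{p,m})$ varying with $p$. The saving feature is that the Dinh--Sibony constants depend on the ambient K\"ahler geometry only through uniform bounds on $\omega_{p,m}$ relative to the standard product Fubini-Study form on $(\mathbb{P}^{d_p-1})^m$, and Lemma \ref{L:est}(i) gives $c_{p,m}\asymp 1/m$ uniformly in $p$, so the relevant Bergman/volume comparisons on projective space carry through with constants that depend only on $m$ and $n$. A secondary subtlety is controlling the discretization of $\cC^2$ test forms on $X$ with constants uniform in $p$; this is harmless since the target for $\phi$ is the \emph{fixed} manifold $(X,\vartheta)$, and only the size of the net (polynomial in the accuracy) enters, which we already absorbed into $A_p^\zeta$.
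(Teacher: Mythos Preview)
Your approach is essentially the same as the paper's: apply \cite[Lemma 4.2(d)]{DS06} to the meromorphic transform $\Phi_{p,m}$, then use the degree estimates of Lemma~\ref{L:est} to convert $\delta^1_{p,m}$, $\delta^2_{p,m}$, $d_{p,m}$ into powers of $A_p$. The bound you record in \eqref{eq:DSplan} is correct and, after your rescaling $\lambda=A_p^m\varepsilon/\delta^1_{p,m}$, yields exactly the conclusion.

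However, the justification you give for \eqref{eq:DSplan} is off in one respect. The polynomial factor $d_{p,m}^{\zeta_0}$ does \emph{not} arise from ``discretization of the unit ball of $\cC^2$ test forms on $X$ by a polynomially-sized net''. In the Dinh--Sibony framework the uniformity over $\|\phi\|_{\cC^2}\leqslant1$ is built into Lemma~4.2(d) itself: one has $\sigma_{p,m}(E'_{p,m}(\varepsilon))\leqslant\Delta_p(\eta_{\varepsilon,p})$ with $\eta_{\varepsilon,p}=\varepsilon\,\delta^1_{p,m}/\delta^2_{p,m}-3R_p$, where $R_p=R(\mathbb X_{p,m},\omega_{p,m},\sigma_{p,m})$ and $\Delta_p(t)=\Delta(\mathbb X_{p,m},\omega_{p,m},\sigma_{p,m},t)$ are the qpsh-compactness quantities of \cite[\S2]{DS06}. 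The appendix of \cite{DS06} (see also \cite[Lemma 4.6]{CMN16}) gives $R_p\leqslant\nu' m(1+\log d_{p,m})$ and $\Delta_p(t)\leqslant\nu'(d_{p,m})^{\zeta'}e^{-\alpha' t}$; the polynomial prefactor in \eqref{eq:DSplan} then comes from the $\Delta_p$ estimate together with the term $e^{3\alpha' R_p}\lesssim d_{p,m}^{3\alpha'\nu' m}$, and the exponential decay rate $\alpha_0\,\delta^1_{p,m}/\delta^2_{p,m}$ is exactly $\alpha'$ applied to the first term of $\eta_{\varepsilon,p}$. No $\cC^2$-net on $X$ enters at any point. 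With this correction to the source of \eqref{eq:DSplan}, your argument is complete and matches the paper's.
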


\begin{proof} Fix $m\in\{1,\ldots,n\}$. We apply 
	\cite[Lemma 4.2\,(d)]{DS06} 
to the sequence of meromorphic transforms 
$\Phi_{p,m}:(X,\vartheta)\dashrightarrow({\mathbb X}_{p,m},
\omega_{p,m})$ 
of codimension $n-m$ and the probability measures 
$\sigma_{p,m}=\omega_{p,m}^{d_{p,m}}$ on ${\mathbb X}_{p,m}$. Let 
\[E'_{p,m}(\varepsilon):=\bigcup_{\|\phi\|_{\cC^2}\leqslant 1}
\big\{ \bfs_p\in{\mathbb X}_{p,m}:\,\big|\big\langle[\bfs_p=0]-
\gamma_p^m,\phi\big\rangle\big|\geqslant\delta^1_{p,m}
\varepsilon \big\},\]
where $p>p_0$ and $\delta^1_{p,m}$ is the degree of 
$\Phi_{p,m}$ defined in \eqref{e:degree1}. 
By \cite[Lemma 4.2\,(d)]{DS06} 
it follows that 
\[\sigma_{p,m}(E'_{p,m}(\varepsilon))
\leqslant\Delta_p(\eta_{\varepsilon,p})\,,\,
\text{ where $\eta_{\varepsilon,p}:=
\varepsilon\,\frac{\delta^1_{p,m}}{\delta^2_{p,m}}-3R_p$.}\]
Here 
\[R_p:=R({\mathbb X}_{p,m},\omega_{p,m},\sigma_{p,m})\,,\,\;
\Delta_p(t):=   
\Delta({\mathbb X}_{p,m},\omega_{p,m},\sigma_{p,m},t), 
\text{ where $t>0$,}\] 
are quantities defined in \cite[Sections 2.1,\ 2.2]{DS06} 
and are related to certain compact classes of quasiplurisubharmonic 
functions 
on ${\mathbb X}_{p,m}$ (see also \cite[Section 4.1]{CMN16}). 
By the appendix of \cite{DS06} (see also \cite[Lemma 4.6]{CMN16}) 
we infer that 
\[R_p\leqslant\nu'm\big(1+\log d_{p,m}\big)\,,\,\;\Delta_p(t)\leqslant
\nu'\big(d_{p,m}\big)^{\zeta'}e^{-\alpha't},\;t>0,\]
where $\nu',\zeta',\alpha'>0$ are constants depending only on $m$. 

Let $M_1,p_1$ be as in Lemma \ref{L:est}. 
Then by \eqref{e:delta} we have for $p>p_1$,
\[\eta_{\varepsilon,p}\geqslant\frac{\varepsilon A_p}{M_1}-
3R_p\geqslant\frac{\varepsilon A_p}{M_1}
-3\nu'm\big(1+\log d_{p,m}\big).\]
Hence
\[\sigma_{p,m}(E'_{p,m}(\varepsilon))\leqslant
\Delta_p(\eta_{\varepsilon,p})\leqslant
\nu''\big(d_{p,m}\big)^{\zeta''}e^{-\alpha''A_p\varepsilon},\]
where $\nu'',\zeta''>0$ are constants depending only on 
$m$ and $\alpha''=\alpha'/M_1$. Using again \eqref{e:delta} 
we have $\delta^1_{p,m}\leqslant M_1A_p^m$, so 
$E_{p,m}(\varepsilon)\subset E'_{p,m}(\varepsilon/M_1)$. 
Therefore
\[\sigma_{p,m}(E_{p,m}(\varepsilon))\leqslant
\sigma_{p,m}(E'_{p,m}(\varepsilon/M_1))\leqslant
\nu''\big(d_{p,m}\big)^{\zeta''}e^{-\alpha''A_p\varepsilon/M_1}.\]
Since by \eqref{e:dp}, $d_{p,m}<md_p\leqslant mM_1A_p^n$
for $p>p_1$,  the conclusion follows.
\end{proof}

\begin{prop}\label{P:MT2}
In the hypotheses of Theorem \ref{T:equidist}, 
there exist $C>0$ and $p_1\in\N$ such that for every 
$\beta>0$, $m\in\{1,\ldots,n\}$ and $p>p_1$ there exists 
a subset $E_{p,m}^\beta\subset{\mathbb X}_{p,m}\,$ 
with the following properties:

\smallskip

(i) $\sigma_{p,m}(E_{p,m}^\beta)\leqslant CA_p^{-\beta}$;

\smallskip

(ii) if $\bfs_p\in{\mathbb X}_{p,m}\setminus E_{p,m}^\beta$ 
then, for any $(n-m,n-m)$ form $\phi$ of class $\cC^2$ on $X$,
\[\Big|\frac{1}{A_p^m}\Big\langle[\bfs_p=0]-\gamma_p^m,
\phi\Big\rangle\Big|\leqslant C\,(\beta+1)\,
\frac{\log A_p}{A_p}\,\|\phi\|_{\cC^2}\,.\]
Moreover, if $\sum_{p=1}^\infty A_p^{-\beta}<+\infty$ 
then the last estimate holds for $\sigma_{\infty,m}$-a.e.\ 
sequence $\{\bfs_p\}_{p\geqslant1}\in
{\mathbb X}_{\infty,m}$ provided that $p$ is large enough. 
\end{prop}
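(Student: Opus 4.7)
The plan is to extract the statement directly from Proposition \ref{P:MT1} by making an appropriate choice of $\varepsilon$ depending on $\beta$, and then invoke the Borel--Cantelli lemma for the almost-sure part. The substantive probabilistic work has already been done; Proposition \ref{P:MT1} supplies the key exponential estimate
\[
\sigma_{p,m}(E_{p,m}(\varepsilon))\leqslant \nu\,A_p^\zeta\,e^{-\alpha A_p\varepsilon},
\]
so the proof amounts to solving for the smallest $\varepsilon$ that pushes the right-hand side below $A_p^{-\beta}$.

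First I would fix $p_1>p_0$ as in Proposition \ref{P:MT1} and set
\[
\varepsilon_p:=\frac{\beta+\zeta+1}{\alpha}\cdot\frac{\log A_p}{A_p},
\]
with $\nu,\alpha,\zeta$ the constants produced by Proposition \ref{P:MT1}. Using $\lim_{p\to\infty}A_p=\infty$, enlarging $p_1$ if necessary we may assume $\log A_p\geqslant 1$ for $p>p_1$, so that the exponent estimate
\[
\alpha A_p\varepsilon_p=(\beta+\zeta+1)\log A_p\geqslant (\beta+\zeta)\log A_p+\log\nu
\]
holds for $p>p_1$ after a further harmless adjustment of the constant. Define $E_{p,m}^\beta:=E_{p,m}(\varepsilon_p)$ as in \eqref{e:excset}. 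Then Proposition \ref{P:MT1} yields
\[
\sigma_{p,m}(E_{p,m}^\beta)\leqslant \nu A_p^\zeta e^{-\alpha A_p\varepsilon_p}\leqslant A_p^{-\beta},
\]
which gives $(i)$ (with $C=1$, or any $C\geqslant 1$ after absorbing the remaining constants). The property $(ii)$ follows immediately from the definition \eqref{e:excset}: if $\bfs_p\in{\mathbb X}_{p,m}\setminus E_{p,m}^\beta$, then for every $\cC^2$ form $\phi$ we have
\[
\bigl|\langle[\bfs_p=0]-\gamma_p^m,\phi\rangle\bigr|\leqslant A_p^m\varepsilon_p\|\phi\|_{\cC^2},
\]
and dividing by $A_p^m$ produces exactly the estimate claimed, with $C:=(\beta+\zeta+1)/\alpha$ (taking $C$ to be the maximum of all constants introduced).

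For the almost-sure statement, assume $\sum_{p=1}^\infty A_p^{-\beta}<\infty$. Part $(i)$ gives $\sum_{p}\sigma_{p,m}(E_{p,m}^\beta)<\infty$. Viewing each $E_{p,m}^\beta$ as a cylinder subset of ${\mathbb X}_{\infty,m}=\prod_p {\mathbb X}_{p,m}$ via the natural projection, the Borel--Cantelli lemma applied to the product measure $\sigma_{\infty,m}$ implies that $\sigma_{\infty,m}$-a.e.\ sequence $\{\bfs_p\}_{p\geqslant 1}$ lies in $E_{p,m}^\beta$ for only finitely many $p$. Consequently, the estimate from $(ii)$ holds for all sufficiently large $p$ along such a sequence.

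There is no genuine obstacle at this stage: the geometric and analytic difficulties were absorbed into Proposition \ref{P:MT1} and into the Bergman kernel asymptotics which justified condition \eqref{e:Bkf}. The only care required is to track the dependence on $\beta$ in the choice of $\varepsilon_p$ and to verify that the constant $C$ can be chosen independently of $m\in\{1,\ldots,n\}$, which is automatic because $\nu,\alpha,\zeta$ from Proposition \ref{P:MT1} can be chosen uniformly in $m$ (by taking maxima and minima over the finitely many values $m\in\{1,\ldots,n\}$).
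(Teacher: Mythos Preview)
Your proof is correct and follows essentially the same route as the paper: choose $\varepsilon_p$ proportional to $(\beta+\zeta)\log A_p/A_p$, set $E_{p,m}^\beta=E_{p,m}(\varepsilon_p)$, apply Proposition \ref{P:MT1}, and finish with Borel--Cantelli. The paper's bookkeeping is slightly cleaner (it takes $\varepsilon_p=(\beta+\zeta)\log A_p/(\alpha A_p)$ and lets $C=\max\{\nu,1/\alpha,\zeta/\alpha\}$, avoiding your extra $+1$ and the need to enlarge $p_1$); note also that your displayed choice $C=(\beta+\zeta+1)/\alpha$ depends on $\beta$, so you should record instead the uniform bound $(\beta+\zeta+1)/\alpha\leqslant\frac{1+\zeta}{\alpha}(\beta+1)$.
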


\begin{proof} For every $\beta>0$, $m\in\{1,\ldots,n\}$ and 
	$p>p_1$, let 
\[\varepsilon_p=\frac{(\beta+\zeta)\log A_p}{\alpha A_p}\,,
\,\;E_{p,m}^\beta:=E_{p,m}(\varepsilon_p),\]
where $p_1,\alpha,\zeta$ are as in Proposition \ref{P:MT1} 
and the set $E_{p,m}(\varepsilon)$ is defined in \eqref{e:excset}. 
By Proposition \ref{P:MT1}, we have that 
\[\sigma_{p,m}(E_{p,m}^\beta)\leqslant\nu\,A_p^\zeta\,
e^{-\alpha A_p\varepsilon_p}=\nu A_p^{-\beta}.\]
If $\bfs_p=(s_{p1},\ldots,s_{pm})\in{\mathbb X}_{p,m}\setminus 
E_{p,m}^\beta$ then, by the definition of $E_{p,m}^\beta$, the current 
$[\bfs_p=0]=[s_{p1}=0]\wedge\ldots\wedge[s_{pm}=0]$
is well defined and 
\[\Big|\frac{1}{A_p^m}\Big\langle[\bfs_p=0]-
\gamma_p^m,\phi\Big\rangle\Big|\leqslant\varepsilon_p\,
\|\phi\|_{\cC^2}\,,\]
for any $(n-m,n-m)$ form $\phi$ of class $\cC^2$. 
So assertions $(i)$ and $(ii)$ hold with the constant 
$C:=\max\big\{\nu,\frac{1}{\alpha},\frac{\zeta}{\alpha}\big\}$. 
The last assertion follows from these using the Borel-Cantelli lemma 
(see e.g.\ the proof of \cite[Theorem 4.2]{CMN16}).
\end{proof}

\begin{prop}\label{P:FSspeed}
In the hypotheses of Theorem \ref{T:equidist}, there exist 
$C>0$ and $p_1\in\N$ such that for every $m\in\{1,\ldots,n\}$, 
$p>p_1$ and every 
$(n-m,n-m)$ form $\phi$ of class $\cC^2$ on $X$, we have
\[\Big|\Big\langle\frac{\gamma_p^m}{A_p^m}-\omega^m,
\phi\Big\rangle\Big|\leqslant C\left(\frac{\log A_p}{A_p}+
A_p^{-a}\right)\|\phi\|_{\cC^2}\,.\]
\end{prop}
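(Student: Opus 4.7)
The plan is to decompose $\gamma_p/A_p-\omega$ via the Lelong--Poincar\'e identity \eqref{e:FSB} into a smooth piece governed by \eqref{e:T} and a $dd^c$-exact piece governed by the Bergman kernel bound \eqref{e:Bkf}, then feed this decomposition into a telescoping expansion of $\alpha_p^m-\omega^m$, where $\alpha_p:=\gamma_p/A_p$, and integrate by parts.

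Set $\omega_p':=A_p^{-1}c_1(L_p,h_p)$ and $u_p:=\log P_p$, so that \eqref{e:FSB} yields
\[\alpha_p-\omega=(\omega_p'-\omega)+\frac{1}{2A_p}\,dd^c u_p.\]
By \eqref{e:T}, $|\omega_p'-\omega|_{\cC^0}\leqslant C_0A_p^{-a}$, and by \eqref{e:Bkf}, $|u_p-n\log A_p|\leqslant\log M_0$ for $p>p_0$. Note also that $\alpha_p$ is a smooth positive closed $(1,1)$-form, by Lemma \ref{L:Bertini}~(i) and the identity $\gamma_p=\Phi_p^*\omega_{_{\FS}}$.

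The telescoping identity
\[\frac{\gamma_p^m}{A_p^m}-\omega^m=\sum_{k=0}^{m-1}\alpha_p^k\wedge(\alpha_p-\omega)\wedge\omega^{m-1-k}\]
then splits, upon inserting the two-term decomposition of $\alpha_p-\omega$, into a smooth part and a $dd^c$-exact part. For the smooth part, set $T_k:=\alpha_p^k\wedge\omega^{m-1-k}$, which is a positive closed $(m-1,m-1)$-current. The standard pointwise estimate for pairing a positive current with a bounded form gives
\[\big|\langle T_k\wedge(\omega_p'-\omega),\phi\rangle\big|\leqslant C\,|\omega_p'-\omega|_{\cC^0}\,\|\phi\|_{\cC^0}\int_X T_k\wedge\vartheta^{n-m+1}.\]
Since every factor of $T_k$ is closed, the mass $\int_X T_k\wedge\vartheta^{n-m+1}$ is a cohomology pairing in $[\alpha_p]^k[\omega]^{m-1-k}[\vartheta]^{n-m+1}$; as $[\alpha_p]=A_p^{-1}[c_1(L_p,h_p)]\to[\omega]$ by \eqref{e:T}, these masses are uniformly bounded in $p$, and the smooth part contributes $O(A_p^{-a})\|\phi\|_{\cC^0}$.

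For the $dd^c$-exact part, integration by parts on the compact $X$, using closedness of $\alpha_p$ and $\omega$, transfers $dd^c$ onto $\phi$:
\[\Big\langle T_k\wedge\tfrac{1}{2A_p}dd^cu_p,\phi\Big\rangle=\frac{1}{2A_p}\int_X u_p\,T_k\wedge dd^c\phi.\]
Writing $u_p=n\log A_p+v_p$ with $|v_p|\leqslant\log M_0$, the constant $n\log A_p$ contributes $\frac{n\log A_p}{2A_p}\int_X T_k\wedge dd^c\phi$, which vanishes because $T_k\wedge dd^c\phi=d(T_k\wedge d^c\phi)$ is $d$-exact on compact $X$. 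The remaining term, bounded via $|v_p|_{\cC^0}\leqslant\log M_0$, $|dd^c\phi|_{\cC^0}\leqslant C\|\phi\|_{\cC^2}$, and the same cohomological mass bound, is $O(A_p^{-1})\|\phi\|_{\cC^2}$, which is absorbed in $O(\log A_p/A_p)\|\phi\|_{\cC^2}$. Summing over $k\in\{0,\ldots,m-1\}$ and combining both contributions yields the claimed estimate. The main technical points are the positivity-based pointwise bound converting the smooth pairing into a mass integral, and the cohomological argument for uniform boundedness of $\int_X T_k\wedge\vartheta^{n-m+1}$; both are standard once the decomposition is in place.
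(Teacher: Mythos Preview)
Your proof is correct and follows essentially the same route as the paper's: the same telescoping identity for $\alpha_p^m-\omega^m$, the same split of $\alpha_p-\omega$ via \eqref{e:FSB} into a smooth $O(A_p^{-a})$ piece and a $dd^c$-exact piece handled by integration by parts, and the same cohomological mass bound on $\int_X T_k\wedge\vartheta^{n-m+1}$. Your treatment of the $dd^c$-exact piece is in fact slightly sharper---by writing $\log P_p=n\log A_p+v_p$ and observing that the constant term integrates to zero against the closed current $T_k$ by Stokes, you obtain $O(A_p^{-1})$ rather than the paper's $O(\log A_p/A_p)$ (the paper simply bounds $|\log P_p|\leqslant(n+1)\log A_p$)---but otherwise the arguments coincide.
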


\begin{proof} There exists $c>0$ such that for every real 
$(n-m,n-m)$ form $\phi$ of class $\cC^2$, $m\in\{1,\ldots,n\}$, 
and every real $(1,1)$ form $\theta$ on $X$ one has 
\begin{equation}\label{e:ddc}
-c\|\phi\|_{\cC^2}\,\vartheta^{n-m+1}\leqslant dd^c\phi\leqslant 
c\|\phi\|_{\cC^2}\,\vartheta^{n-m+1},
\end{equation}
\begin{equation}\label{e:wedge}
-c\|\phi\|_{\cC^0}\|\theta\|_{\cC^0}\,\vartheta^{n-m+1}\leqslant
\phi\wedge\theta\leqslant c\|\phi\|_{\cC^0}\|\theta\|_{\cC^0}\,\vartheta^{n-m+1}.
\end{equation}
For $p>p_0$ let 
\[R_p:=\frac{\gamma_p^m}{A_p^m}-\omega^m\,,\,\;\rho_p:=
\sum_{j=0}^{m-1}\frac{\gamma_p^j}{A_p^j}\wedge 
\omega^{m-1-j}\,,\,\;\alpha_p:=\frac{c_1(L_p,h_p)}{A_p}-\omega\,.\]
By \eqref{e:T}, respectively by \eqref{e:FSB}, we have that 
\[\|\alpha_p\|_{\cC^0}\leqslant\frac{C_0}{A_p^a}\;,\qquad
\frac{\gamma_p}{A_p}-\omega=\alpha_p+\frac{1}{2A_p}\,dd^c\log P_p\,.\]
Hence if $\phi$ is a real $(n-m,n-m)$ form of class $\cC^2$ we obtain that
\begin{equation}\label{e:est0}
\langle R_p,\phi\rangle=\Big\langle\Big(\frac{\gamma_p}{A_p}-
\omega\Big)\wedge\rho_p,\phi\Big\rangle=\int_X\rho_p\wedge\alpha_p
\wedge\phi+\int_X\frac{\log P_p}{2A_p}\,\rho_p\wedge dd^c\phi\,.
\end{equation}
Using \eqref{e:wedge} we infer that
\[-\frac{c\,C_0}{A_p^a}\,\|\phi\|_{\cC^0}\,
\vartheta^{n-m+1}\leqslant\alpha_p\wedge
\phi\leqslant\frac{cC_0}{A_p^a}\,\|\phi\|_{\cC^0}\,\vartheta^{n-m+1},\]
hence 
\begin{equation}\label{e:est1}
\Big|\int_X\rho_p\wedge\alpha_p\wedge\phi\Big|
\leqslant\frac{cC_0}{A_p^a}\,\|\phi\|_{\cC^0}\int_X\rho_p
\wedge\vartheta^{n-m+1}.
\end{equation}
By \eqref{e:ddc}, the total variation of the signed measure 
$\rho_p\wedge dd^c\phi$ verifies 
\[|\rho_p\wedge dd^c\phi|\leqslant c\|\phi\|_{\cC^2}\,
\rho_p\wedge\vartheta^{n-m+1}.\] 
Therefore 
\[\Big|\int_X\frac{\log P_p}{2A_p}\,\rho_p\wedge dd^c
\phi\Big|\leqslant c\|\phi\|_{\cC^2}\int_X\frac{|\log P_p|}{2A_p}\,
\rho_p\wedge\vartheta^{n-m+1}.\]

We choose $p_1>p_0$ such that \eqref{e:T1} holds for $p>p_1$ 
and $A_p>M_0$ for $p>p_1$. By \eqref{e:Bkf} it follows that 
$A_p^{n-1}\leqslant P_p\leqslant A_p^{n+1}$, so 
$|\log P_p|\leqslant(n+1)\log A_p$, hold on $X$ for $p>p_1$. 
We infer that 
\begin{equation}\label{e:est2}
\Big|\int_X\frac{\log P_p}{2A_p}\,\rho_p\wedge dd^c\phi\Big|
\leqslant\frac{nc\|\phi\|_{\cC^2}\log A_p}{A_p}\int_X\rho_p\wedge
\vartheta^{n-m+1}\,\text{ for $p>p_1$.}
\end{equation}
Using \eqref{e:T1} and \eqref{e:FSB} we have, for $p>p_1$ and 
$0\leqslant j\leqslant m-1$, that
\[\int_X\frac{\gamma_p^j}{A_p^j}\wedge \omega^{m-1-j}
\wedge\vartheta^{n-m+1}=\int_X\frac{c_1(L_p,h_p)^j}{A_p^j}
\wedge \omega^{m-1-j}\wedge\vartheta^{n-m+1}
\leqslant2^j\int_X\omega^{m-1}\wedge\vartheta^{n-m+1}.\]
Hence
\begin{equation}\label{e:est3}
\int_X\rho_p\wedge\vartheta^{n-m+1}
=\sum_{j=0}^{m-1}\int_X\frac{\gamma_p^j}{A_p^j}\wedge
\omega^{m-1-j}\wedge\vartheta^{n-m+1}<2^m\int_X\omega^{m-1}
\wedge\vartheta^{n-m+1}.
\end{equation}
By \eqref{e:est0}, \eqref{e:est1}, \eqref{e:est2} and \eqref{e:est3} 
we conclude that if $p>p_1$ then 
\[
|\langle R_p,\phi\rangle|\leqslant 2^m\Big(\frac{c\,C_0}{A_p^a}\,
\|\phi\|_{\cC^0}+\frac{nc\log A_p}{A_p}\,\|\phi\|_{\cC^2}\Big)
\int_X\omega^{m-1}\wedge\vartheta^{n-m+1},
\]
for every $m\in\{1,\ldots,n\}$ and every real  $(n-m,n-m)$ form 
$\phi$ of class $\cC^2$. This implies the proposition.
\end{proof}

\smallskip

\begin{proof}[Proof of Theorem \ref{T:equidist}] 
	Theorem \ref{T:equidist} follows at once from 
Propositions \ref{P:MT2} and \ref{P:FSspeed}.
\end{proof}

\end{document}